\newcommand \reg{\operatorname{reg}}
\newcommand \Tor{\operatorname{Tor}}
\newcommand \pd{\operatorname{pdim}}
\newcommand \iv{\operatorname{iv}}
\newcommand \depth{\operatorname{depth}}
\newcommand \K{\mathbb{K}}
\newcommand{\dist}{\operatorname{dist}}
\newtheorem{theorem}{Theorem}[section]
\newtheorem{definition}[theorem]{Definition}
\newtheorem{lemma}[theorem]{Lemma}
\newtheorem{proposition}[theorem]{Proposition}
\newtheorem{example}[theorem]{Example}
\newtheorem{question}[theorem]{Question}
\newtheorem{remark}[theorem]{Remark}
\begin{document}
\title[Depth of Binomial Edge Ideals in terms of Diameter and Vertex Connectivity]{Depth of Binomial Edge Ideals in terms of Diameter and Vertex Connectivity}
\author[A. V. Jayanthan]{A. V. Jayanthan}
\email{jayanav@iitm.ac.in}
\address{Department of Mathematics, Indian Institute of Technology Madras, Chennai-600036, India}
\author[Rajib Sarkar]{Rajib Sarkar}
\email{rajib.sarkar63@gmail.com}
\address{Stat-Math Unit, Indian Statistical Institute, 203 B.T. Road, Kolkata--700108, India}

\begin{abstract}
Let $G$ be a simple connected non-complete graph and $J_G$ be its binomial edge ideal in a polynomial ring $S$. Using certain invariants associated to graphs, say $U(G)$, Banerjee and N\'{u}\~{n}ez-Betancourt gave an upper bound for the depth of $S/J_G$, and Rouzbahani Malayeri, Saeedi Madani and Kiani obtained a lower bound, say $L(G)$. Hibi and Saeedi Madani gave a structural classification of  graphs satisfying $L(G)=U(G)$.  In this article, we give structural classification of graphs satisfying $L(G)+1=U(G)$. We also compute the depth of $S/J_G$ for all such graphs $G$.
\end{abstract}

\dedicatory{Dedicated to J\"urgen Herzog on the occasion of his $80^{th}$ birthday}
\keywords{Binomial edge ideal, Depth, Diameter,  vertex connectivity}
\thanks{AMS Subject Classification (2020): 13D02, 13C13, 05E40}
\maketitle
\section{Introduction}

Let $R = \K[x_1,\ldots,x_m]$ be the standard graded polynomial ring over an arbitrary
field $\K$ and $M$ be a finitely generated graded  $R$-module. 
Let
\[
0 \longrightarrow \bigoplus_{j \in \mathbb{Z}} R(-j)^{\beta_{p,j}(M)} 
{\longrightarrow} \cdots {\longrightarrow} \bigoplus_{j \in \mathbb{Z}} R(-j)^{\beta_{0,j}(M)} 
{\longrightarrow} M\longrightarrow 0,
\]
be the minimal graded free resolution of $M$. The number $\beta_{i,j}(M)$ is called the 
$(i,j)$-th \textit{graded Betti number} of $M$. One can read off a lot of homological information about the module from the minimal graded free resolution. For example, the projective dimension, which is denoted by $\pd(M)$ and defined as $$\pd(M):= \max\{i : \beta_{i,j}(M) \neq 0 \text{ for some } j\}.$$
It may be noted that in this case, computing the projective dimension is equivalent to computing the depth of module, thanks to Auslander-Buchsbaum formula. Another important homological invariant that one gets from the minimal graded free resolution is called Castelnuovo-Mumford regularity (or simply, regularity) of M, denoted by reg(M), and  is defined as
\[
\reg(M)=\max\{j-i : \beta_{i,j}(M) \neq 0 \text{ for some } i \geq 0 \}.
\]

%If $\beta_{i,i+j}(M)\neq 0$ and for all pairs $(k,l)\neq (i,j)$ with $k\geq i$ and $l\geq j$, $\beta_{k,k+l}(M)=0$, then $\beta_{i,i+j}(M)$ is called an \textit{extremal Betti number} of $M$. If $p =\pd(M)$, then there exist unique number $i$ such that $\beta_{p,p+i}(M)$ is an extremal Betti number. Using the famous Auslander-Buchsbaum formula, we can obtain the depth of $M$, $\depth(M)=m-\pd(M).$

Associating an ideal to a geometric/topological/combinatorial object and understanding its properties through the algebraic properties of the ideal has been a classical theme of research. When $G$ is a finite simple graph, Herzog et al. \cite{HH1} and independently Ohtani \cite{oh}, introduced binomial edge ideal associated with the graph. Let $G$ be a simple graph on $[n] = \{1, \ldots, n\}$. The binomial edge ideal corresponding to $G$ is the ideal $J_G$ generated by $\{x_iy_j - x_jy_i ~:~ \{i, j\} \in E(G)\}$ in the polynomial ring $S=\mathbb{K}[x_1, \ldots, x_n, y_1, \ldots, y_n]$. The binomial edge ideals of $G$ can also be thought of as a natural generalization of the well-studied determinantal ideal of the generic matrix 
\[\begin{bmatrix}
	x_1 \ x_2 \cdots x_n \\
	y_1 \ y_2 \cdots y_n
\end{bmatrix}
.\]
On the other hand, binomial edge ideals arise naturally
in Algebraic Statistics, in the context of conditional independence ideals, see \cite[Section 4]{HH1}.

The connection between combinatorial invariants associated to $G$ and algebraic invariants associated to $J_G$ has been of intense study in the past one decade (see \cite{BN17,dav,dav2,her1,Hibi-Madani,JAR2,AR2,MKM-depth,MKM-JA,MKM-Conj} for a partial list and see \cite{Madani-survey} for a survey on this topic). Cohen-Macaulayness is an  algebraic property of a module that is of interest to algebraists as well as researchers in other areas such as geometers and topologists. In the context of binomial edge ideals, there have been attempts to understand the Cohen-Macaulayness of $S/J_G$ in terms of the combinatoics of the graph $G$, \cite{dav,dav2,her1,Rinaldo-Rauf,Rinaldo-Cactus}. To understand the Cohen-Macaulayness, it is necessary to understand the depth of the module. 
%
%\textcolor{red}{It is an interesting question to give a characterization of Cohen-Macaulay binomial edge ideals in terms of the underlying graphs. It might be very difficult to give such a full classification (see \cite{dav2} and citations there in). To characterize Cohen-Macaulay binomial edge ideals, it is necessary to understand the depth of binomial edge ideals.} 
%
%\textcolor{red}{ The well known fact is that $\depth(S/J_G)\leq \dim(S/P)$ for all $P\in \ass(J_G)$, see \cite[Proposition 1.2.13]{bh}. By \cite[Theorem 3.2]{HH1}, we have that $P=P_{\emptyset}(G)\in \ass(J_G)$ with $\dim(S/P)=n+1$. Therefore, $\depth(S/J_G)\leq n+1$ for every connected graph $G$ on $n$ vertices. 
It is known that if $G$ is a complete graph on $n$ vertices, then $S/J_G$ is Cohen-Macaulay with $\depth (S/J_G) =n+1$, (see \cite[Corollary 2.8]{BV88}). Ene, Herzog and Hibi first studied the depth of binomial edge ideal and they proved that $\depth(S/J_G)=n+1$ whenever $G$ is a connected block graph on $n$ vertices, \cite{her1}. 
For a connected non-complete graph, a general combinatorial upper bound for the depth of $S/J_G$ was first obtained by Banerjee and N\'{u}\~{n}ez-Betancourt in \cite{BN17} and they proved that for a connected non-complete graph $G$ on $[n]$, 
$$\depth(S/J_G) \leq n+2-\kappa(G),$$  where $\kappa(G) = \min\{|T| ~:~ T \subset [n] \text{ and the induced graph on } [n]\setminus T \text{ is disconnected}\}$. This invariant is called the \textit{vertex connectivity} of $G$.  For $i, j \in [n],$ let $\dist_G(i,j)$ denote the number of edges in a shortest path connecting $i$ and $j$ in $G$. Let $d(G) = \max\{\dist_G(i,j) ~:~ i, j \in [n]\}$ denote the \textit{diameter} of $G$. In \cite{MKM-depth}, Rouzbahani Malayeri, Saeedi Madani and Kiani proved that 
$$d(G) + f(G) \leq \depth (S/J_G),$$ where $f(G)$ denotes the number of simplicial vertices of $G$. Then the natural question comes that whether there exists a graph $G$ such that $d(G)+f(G)=n+2-\kappa(G)$. Recently, Hibi and Saeedi Madani showed the existence of such graphs and characterized connected non-complete graphs with such property, \cite{Hibi-Madani}. In this article, we consider the next case and characterize connected non-complete graphs $G$ with the property that $$d(G)+f(G)+1=n+2-\kappa(G).$$ 

In Section 3, we first give necessary and sufficient conditions for a graph $G$ to satisfy the property $d(G)+f(G)+1 = n+2-\kappa(G)$, (\Cref{charac-thm}). We then completely classify these graphs in terms of the structural properties: \\[2ex]
\textbf{Theorem}
\ref{induced-cycle-charac}. 
{\em Let $G$ be a graph on $[n]$ such that $d(G)+f(G)+1=n+2-\kappa(G).$ 
\begin{enumerate}
    \item If $\kappa(G)=1$, then either $G$ is chordal or $G$ has precisely one induced $C_4$ and has no induced $C_l$ for $l \geq 5$. 
    \item If $\kappa(G)\geq 2$ and $d(G)=2$, then $G$ is a chordal graph.
    \item If $\kappa(G)=2$ and $d(G)=3$, then either $G$ is  chordal or $G$ has precisely one induced $C_4$ and has no induced $C_l$ for $l \geq 5$.
\end{enumerate}
}

For such a graph $G$,  $d(G)+f(G)\leq \depth(S/J_G)\leq d(G)+f(G)+1$. In Section 4, we describe completely the graphs that attain the lower bound or upper bound. We do this by splitting the study mainly into two cases, namely $\kappa(G)=1$ and $\kappa(G) \geq 2$. When $\kappa(G)=1$, we first consider chordal graphs in this category. We define a graph $H$ having $d+2$ vertices and $d+3$ edges, see Figure \ref{figure-H}. We show that if $G$ is chordal satisfying $d(G)+f(G)+1 =  n+1$ ($\kappa(G)=1)$ and not  having $H$ as an induced subgraph, then $\depth(S/J_G) = n+1$, \Cref{claim1}. For chordal graphs with $\kappa(G)=1$ and having $H$ as an induced subgraph, we show that $\depth(S/J_G) = n$, \Cref{claim2}. When $\kappa(G)=1$ and $G$ non-chordal, then we describe the depth of these graphs using certain combinatorial properties, Theorems \ref{claim3}, \ref{claim4}.
In Theorems \ref{claim5} - \ref{claim8}, we obtain the depths of these graphs with $\kappa(G)\geq 2$. We again do this by splitting the study into chordal and non-chordal classes.
Whenever one gets a bound on an invariant, it is interesting to ask if one can characterize all the objects satisfying the lower bound. In our case, this seems like a very hard question. The  Hibi-Madani class is one which attains the lower bound. In Section 5, we obtain certain chordal graphs which having depth strictly bigger than the minimal value, (\Cref{coc}). For unicyclic and quasi-cycle graphs, we obtain necessary conditions for the depth to attain the minimum value, (\Cref{nec-min-depth}).

\section{Preliminaries}
We recall some basic notation, definitions and some known results which will be used throughout this article.

Let $G$ be a simple graph with the vertex set $V(G)$ and the edge set $E(G)$. A subgraph $H$ of $G$ is said to be an \textit{induced subgraph} if for $u, v \in V(H)$, $\{u,v\} \in E(H)$ if and only if $\{u,v\} \in E(G)$. If $A \subseteq V(G)$, then $G[A]$ denotes the induced subgraph on the vertex set $A$. For a vertex $v$ of $G$, $G \setminus v$ denotes the induced subgraph on the vertex set $V(G) \setminus \{v\}$. For $e \in E(G)$,  $G\setminus e$ denotes the graph with vertex set $V(G)$ and the edge set $E(G) \setminus \{e\}$. If $v \in V(G)$, then $N_G(v) := \{u ~:~ \{u,v\} \in E(G)\}$ is called the \textit{neighborhood of }$v$. For $v \in V(G)$, let $G_v$  denote the graph with vertices $V(G)$ and $E(G_v) =  E(G) \cup \{\{u, w\} ~:~ u, w \in N_G(v)\}$.  The graph $G$ is called a \textit{chordal graph} if every induced cycle of $G$ has length $3$. We denote by $K_m$, a complete graph on $m$ vertices. We recall the definition of the clique sum of two graphs.

\begin{definition}
Let $G_1$ and $G_2$ be two subgraphs of $G$ such that $G_1\cap G_2\cong K_m$, $V(G)=V(G_1)\cup V(G_2)$ and $E(G)=E(G_1)\cup E(G_2).$ Then $G$ is called the clique sum of $G_1$ and $G_2$ along the complete graph $K_m$ and it is denoted by $G_1\cup_{K_m}G_2$. If $m=1$, i.e., $K_1=\{v\}$, (respectively $m=2$ , i.e., $K_2 = e=\{v_1,v_2\}$), then we denote the clique sum by $G_1 \cup_v G_2$ (respectively $G_1 \cup_e G_2$).
\end{definition}

Let $\Delta$ be a simplicial complex. A facet $F$ of $\Delta$ is called a \textit{leaf} if either it is the only facet of $\Delta$ or there exists a facet  $G$, called a \textit{branch} of $F$, such that for any facet $H(\neq F)$ of $\Delta$, $H \cap F \subseteq G \cap F$. A \textit{simplicial vertex} of $\Delta$ is a vertex that is contained in only one facet. The simplicial complex $\Delta$ is called a \textit{quasi-forest} if its facets can be ordered $F_1, \ldots, F_r$, called a \textit{leaf order}, such that $F_i$ is a leaf of the simplicial complex with facets $F_1, \ldots, F_{i-1}$. 

If a graph $G$ is the clique sum of two subgraphs along a simplicial vertex, then $G$ is said to be a \textit{decomposable graph}. A graph $G$ is called an \textit{indecomposable graph} if it is not a decomposable graph.

For a graph $G$, a clique is a complete subgraph of $G$. The collection of cliques of $G$ forms a simplicial complex, called the \textit{clique complex} of $G$, denoted by $\Delta(G)$. A vertex of $G$ is called a \textit{simplicial vertex} if it is a free vertex of $\Delta(G)$, i.e., a vertex $v$ is a simplicial vertex of $G$ if it is contained in only one facet of $\Delta(G)$. A vertex of $G$ which is not a simplicial vertex is called an \textit{internal vertex} of $G$ and the number of internal vertices is denoted by $\iv(G)$. For $T\subseteq V(G)$, $c_G(T)$ denotes the number of components of $G\setminus T$. A subset $T\subseteq V(G)$ is said to be a cutset if $c_G(T\setminus \{v\})<c_G(T)$ for every vertex $v\in T$. The set of all cutsets of $G$ is denoted by $\mathcal{C}(G)$. If $T=\{v\}\in \mathcal{C}(G)$, then $v$ is called a \textit{cut vertex}. If $H$ is a  subgraph of $G$ and $v \in V(G)$, then $\dist(v,H):=\min\{\dist_G(u,v) : u \in V(H)\}$.

We call the collection of graphs $G$ satisfying the property $d(G)+f(G) = |V(G)|+2-\kappa(G)$, \textit{Hibi-Madani class}. It may be noted that if $G$ belongs to the Hibi-Madani class, then by \cite[Corollary 2.5]{Hibi-Madani}, $\depth(S/J_G) = d(G)+f(G)$.

Let $A=\K[x_1,\dots,x_m]$, $A'=\K[y_1,\dots,y_n]$ and $B=\K[x_1,\dots,x_m,y_1,\dots,y_n]$ be
polynomial rings. Let $I\subseteq A$ and $J\subseteq A'$ be
homogeneous ideals. Then the minimal free resolution of $B/(I+J)B$ can
be obtained by the tensor product of the minimal free resolutions of
$A/I$ and $A'/J$. Therefore, for all $i,j$, we get

\begin{align}\label{Bettiproduct}
\beta_{i,i+j}\left(\frac{B}{(I+J)B}\right) =
\underset{{\substack{i_1+i_2=i \\
j_1+j_2=j}}}{\sum}\beta_{i_1,i_1+j_1}\left(\frac{A}{I}\right)\beta_{i_2,i_2+j_2}\left(\frac{A'}{J}\right).
\end{align}

We recall the following lemma due to Ohtani.
\begin{lemma}$($\cite[Lemma 4.8]{oh}$)$\label{ohtani-lemma}
Let $G$ be a  graph on $V(G)$ and $v\in V(G)$ such that $v$ is an internal vertex. Then $J_G$ can be written as
	$$J_{G}=J_{G_v}\cap ((x_v, y_v) + J_{G\setminus v}).$$
	\end{lemma}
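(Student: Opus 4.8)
The plan is to prove the two inclusions separately, using the combinatorial prime decomposition of binomial edge ideals. Throughout, write $N_G(v)$ for the set of neighbours of $v$, and recall that $G_v$ denotes the graph on $V(G)$ obtained from $G$ by adding every edge $\{i,j\}$ with $i,j\in N_G(v)$; thus $N_G(v)$ is a clique of $G_v$ and $v$ is a simplicial vertex of $G_v$. The easy inclusion $J_G\subseteq J_{G_v}\cap((x_v,y_v)+J_{G\setminus v})$ is immediate on generators: each generator $x_iy_j-x_jy_i$ with $\{i,j\}\in E(G)$ also corresponds to an edge of $G_v$, so it lies in $J_{G_v}$; if $v\notin\{i,j\}$ it lies in $J_{G\setminus v}$, while if $v\in\{i,j\}$ it lies in $(x_v,y_v)$. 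Hence every generator of $J_G$ lies in both ideals on the right.

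For the reverse inclusion I would invoke the prime decomposition $J_G=\bigcap_{S\subseteq[n]}P_S(G)$ from \cite{HH1}, where for $S\subseteq[n]$ with $G_{[n]\setminus S}$ having connected components on vertex sets $V_1,\dots,V_{c_G(S)}$ one sets
$$P_S(G)=(x_i,y_i : i\in S)+J_{\widetilde{G}_1}+\cdots+J_{\widetilde{G}_{c_G(S)}},$$
with $\widetilde{G}_k$ the complete graph on $V_k$. Splitting the index set according to whether $v\in S$, it suffices to prove the two identities $(x_v,y_v)+J_{G\setminus v}=\bigcap_{S\ni v}P_S(G)$ and $J_{G_v}=\bigcap_{S\not\ni v}P_S(G)$, since intersecting them recovers $\bigcap_{S}P_S(G)=J_G$. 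The first is routine: as the generators of $J_{G\setminus v}$ and of each $P_{S'}(G\setminus v)$ involve no $x_v,y_v$, adding the monomial ideal $(x_v,y_v)$ commutes with intersection, so $(x_v,y_v)+J_{G\setminus v}=\bigcap_{S'\subseteq[n]\setminus v}\big((x_v,y_v)+P_{S'}(G\setminus v)\big)$. For such $S'$ the induced graphs $(G\setminus v)_{[n]\setminus(S'\cup\{v\})}$ and $G_{[n]\setminus(S'\cup\{v\})}$ coincide, whence $(x_v,y_v)+P_{S'}(G\setminus v)=P_{S'\cup\{v\}}(G)$; as $S'$ ranges over subsets of $[n]\setminus v$ the set $S'\cup\{v\}$ ranges over all $S\ni v$, giving the first identity.

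The second identity is the crux. For $v\notin S$, adding the clique edges on $N_G(v)$ does not change the vertex partition of $G_{[n]\setminus S}$ into connected components, because any two neighbours of $v$ surviving in $[n]\setminus S$ already lie in a common component via $v$; hence $P_S(G_v)=P_S(G)$. For $v\in S$, the simpliciality of $v$ in $G_v$ forces $c_{G_v}(S)=c_{G_v}(S\setminus\{v\})$ — deleting $v$ from $(G_v)_{[n]\setminus(S\setminus\{v\})}$ cannot split a component, since the surviving neighbours of $v$ form a clique — so no cutset of $G_v$ contains $v$. As $J_{G_v}$ is radical, it equals $\bigcap_{S\in\mathcal{C}(G_v)}P_S(G_v)$, and every $S\in\mathcal{C}(G_v)$ avoids $v$, on which $P_S(G_v)=P_S(G)$; thus $J_{G_v}=\bigcap_{S\in\mathcal{C}(G_v)}P_S(G)\supseteq\bigcap_{S\not\ni v}P_S(G)$. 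The reverse containment $J_{G_v}=\bigcap_{S}P_S(G_v)\subseteq\bigcap_{S\not\ni v}P_S(G_v)=\bigcap_{S\not\ni v}P_S(G)$ is immediate, settling the second identity and hence the lemma.

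I expect the simpliciality/cutset step in the second identity to be the main obstacle: one must argue that every prime $P_S(G_v)$ with $v\in S$ is redundant, i.e. contains the intersection of the remaining ones, which is precisely the statement that $v$ lies in no cutset of $G_v$. Note that the hypothesis that $v$ is internal is not logically needed for the equality: if $v$ were already simplicial then $G_v=G$ and the claim follows from the easy inclusion alone. It is the internal case that makes the decomposition $J_G=J_{G_v}\cap((x_v,y_v)+J_{G\setminus v})$ a genuinely useful reduction.
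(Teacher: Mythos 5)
Your proof is correct, but there is nothing in the paper to compare it against: the paper does not prove this statement at all, it simply recalls it as a known result with the citation \cite[Lemma 4.8]{oh}, and all the work in the paper happens downstream of it (via the exact sequence \eqref{ohtani-ses}). So your argument should be measured against Ohtani's original proof rather than anything in this article. Your route — the combinatorial prime decomposition $J_G=\bigcap_{S}P_S(G)$ of Herzog--Hibi et al., split according to whether $v\in S$ — is a legitimate and by now standard way to establish the lemma, and the three key steps all check out: the containment $(x_v,y_v)+J_{G\setminus v}=\bigcap_{S\ni v}P_S(G)$ does follow because each $P_{S'}(G\setminus v)$ is extended from the subring without $x_v,y_v$, so adding $(x_v,y_v)$ commutes with the intersection; the identification $P_S(G_v)=P_S(G)$ for $v\notin S$ is right because the added edges join vertices already connected through the surviving vertex $v$; and the crux — that no minimal prime of $J_{G_v}$ contains $(x_v,y_v)$ — follows exactly as you say from simpliciality of $v$ in $G_v$, since re-inserting $v$ attaches it to the single component containing the clique $N_{G_v}(v)$ and hence never lowers the component count, so no cutset of $G_v$ contains $v$. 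Your closing observation that internality of $v$ is not logically needed (only needed for the decomposition to be non-trivial, as $G_v=G$ when $v$ is simplicial) is also accurate. What this approach buys is conceptual transparency at the price of invoking the full HH1 machinery (radicality and the description of minimal primes); Ohtani's original argument is more self-contained, working directly with the ideals, but the two are interchangeable for the purposes of this paper.
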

	Therefore, we have the following short exact sequence:
      \begin{align}\label{ohtani-ses}
    0\longrightarrow \frac{S}{J_{G}}\longrightarrow 
    \frac{S}{(x_v,y_v)+J_{G \setminus v}} \oplus \frac{S}{J_{{G_v}}}\longrightarrow \frac{S}{(x_v,y_v)+J_{G_v \setminus v}} \longrightarrow 0
     \end{align}
     and correspondingly the  long exact sequence of Tor modules:
     \begin{align}\label{ohtani-tor}
\cdots & \longrightarrow \Tor_{i}^{S}\left( \frac{S}{J_G},\K\right)_{i+j}\longrightarrow \Tor_{i}^{S}\left( \frac{S}{(x_v,y_v)+J_{G\setminus v}},\K\right)_{i+j} 
\oplus \Tor_{i}^{S}\left(\frac{S}{J_{G_v}},\K\right)_{i+j}\nonumber \\ & \longrightarrow \Tor_{i}^{S}\left(\frac{S}{(x_v,y_v)+J_{G_v\setminus v}},\K\right)_{i+j} \longrightarrow \Tor_{i-1}^{S}\left( \frac{S}{J_G},\K\right)_{i+j}\longrightarrow \cdots .
     \end{align}
%Rinaldo proved that 
%$Q_v=\underset{{\substack{T\in \mathscr{C}(G), v
%\in T}}} \cap P_{T}(G)$, \cite[Corollary 1.1]{Rinaldo-Cactus}.
\section{Graphs with $d(G)+f(G)+1=n+2-\kappa(G)$}
Let $G$ be a simple graph on the vertex set $[n]$ with diameter $d(G)$, number of simplicial vertices $f(G)$ and vertex connectivity $\kappa(G)$. The main aim of this article is to characterize graphs $G$ such that $$d(G)+f(G)+1=n+2-\kappa(G)$$ and then to compute $\depth(S/J_G)$.

In \cite[Lemma 1.1]{Hibi-Madani}, the authors proved that for a graph $G$ on $[n]$, if $d(G)\geq 3$ and $\kappa(G)\geq 2$, then $d(G)+f(G) < n+2-\kappa(G)$. We begin with an analogues result.
\begin{lemma}\label{tech-lemma}
Let $G$ be a graph on $[n]$. If $d(G) + f(G) + 1 = n+2-\kappa(G)$ and $\kappa(G) \geq 2$, then $d(G) \leq 3$.
%
%Let $G$ be a graph on $[n]$ with $\kappa(G)\geq 2$ and $d(G)\geq 3+r$ for some $r \geq 0$. Then
%\[
%f(G)+d(G)+r < n+2-\kappa(G).
%\]
	
\end{lemma}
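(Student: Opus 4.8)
The plan is to relate a long shortest path in $G$ to a lower bound on $f(G)$ that, combined with the hypothesis, forces a contradiction when $d(G)\ge 4$. The starting point is the chain of inequalities $d(G)+f(G)\le \depth(S/J_G)\le n+2-\kappa(G)$ together with the identity $d(G)+f(G)+1 = n+2-\kappa(G)$, which in particular says that the gap $n+2-\kappa(G)-d(G)-f(G)$ equals exactly $1$. I would argue by contradiction: suppose $d(G)\ge 4$ and $\kappa(G)\ge 2$, and let $P: v_0,v_1,\dots,v_d$ be a shortest path realizing $d=d(G)\ge 4$. The key structural observation is that the interior vertices $v_1,\dots,v_{d-1}$ of such a geodesic each have restricted neighborhoods (any vertex adjacent to $v_i$ must lie in $N[v_{i-1}]\cup N[v_i]\cup N[v_{i+1}]$ along the path), and the two endpoints $v_0,v_d$ are candidates for being simplicial — or more precisely, one should count how many vertices are \emph{forced} to be internal versus how many are available to be simplicial.

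The cleanest route, I expect, is to mimic the counting argument of \cite[Lemma 1.1]{Hibi-Madani}. That result handles the case $d(G)\ge 3,\kappa(G)\ge 2$ and produces a \emph{strict} inequality $d(G)+f(G)<n+2-\kappa(G)$; the point here is to get a quantitatively sharper version showing that when $d(G)\ge 4$ the gap is at least $2$, not just at least $1$. Concretely, I would set $T=\{v_1\}$ if $\kappa(G)=1$ — but since $\kappa(G)\ge 2$ is assumed, I instead pick a minimum cutset $T$ with $|T|=\kappa(G)$ and analyze the components of $G\setminus T$; a geodesic of length $\ge 4$ must pass through $T$, and its two "ends" lie in (at least) two different components, each of which contributes its own simplicial vertices and its own length. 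The bookkeeping should show $n \ge d(G) + (\text{something}) $ and $f(G)\ge (\text{something else})$ in such a way that $n + 2 - \kappa(G) \ge d(G)+f(G)+2$, contradicting the hypothesis.

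The main obstacle will be making the vertex count tight enough: a gap-$1$ hypothesis leaves very little slack, so the argument must account for essentially every vertex. I would handle this by splitting on $\kappa(G)$: the subargument when $\kappa(G)=2$ (which is the delicate boundary case, since larger $\kappa$ only makes $n+2-\kappa(G)$ smaller and hence the contradiction easier) deserves separate care. In that case a geodesic of length $\ge 4$ between vertices $v_0$ and $v_d$ together with a $2$-cutset gives two internally-disjoint $v_0$–$v_d$ paths, hence a long induced cycle or two long "branches", and in either configuration one exhibits at least $d(G)-2\ge 2$ internal vertices beyond those already counted, or equivalently uses $\iv(G)= n-f(G)$ to rewrite the target inequality as $d(G)+1\le \iv(G)+\kappa(G)$ and then shows $\iv(G)\ge d(G)-1$ directly from the geodesic. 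Once the inequality $\iv(G)\ge d(G)-1$ (or its appropriate refinement) is established, combining with $\kappa(G)\ge 2$ forces $d(G)\le 3$, as claimed. I would also double-check the boundary behavior at $d(G)=3$ to confirm the argument does \emph{not} rule it out, consistent with the statement.
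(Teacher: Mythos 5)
There is a genuine gap, and it lies in the endgame of your counting. From the hypothesis $d(G)+f(G)+1=n+2-\kappa(G)$ one gets the exact identity $\iv(G)=n-f(G)=d(G)+\kappa(G)-1$. The inequality you ultimately propose to establish, $\iv(G)\ge d(G)-1$, is therefore vacuous: it is weaker than what the hypothesis already gives and cannot produce a contradiction, so your concluding sentence ``combining with $\kappa(G)\ge 2$ forces $d(G)\le 3$'' does not follow. Even your stronger variant --- ``at least $d(G)-2$ internal vertices beyond those already counted'' --- only yields $\iv(G)\ge (d-1)+(d-2)=2d-3$, and $2d-3\le d+1$ still allows $d=4$, so the case you most need to exclude survives. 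The paper closes exactly this gap with a sharper count: for a diametral pair $u,v$, Menger's theorem gives $\kappa(G)$ internally vertex-disjoint $u$--$v$ paths; each has length at least $d$ because $\dist_G(u,v)=d$, hence each contributes at least $d-1$ interior vertices, and every such interior vertex is non-simplicial. This gives $\iv(G)\ge \kappa(d-1)$, which against $\iv(G)=d+\kappa-1$ yields $(d-2)(\kappa-1)\le 1$ and hence $d\le 3$ when $\kappa\ge 2$. Note the second path contributes $d-1$, not $d-2$, new internal vertices; that single unit is precisely what is needed at $d=4$, $\kappa=2$.

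A secondary problem: your structural claim that a geodesic of length $\ge 4$ ``must pass through $T$'' with ``its two ends in different components'' of $G\setminus T$ is unjustified --- a diametral pair need not be separated by a chosen minimum cutset --- and the opening appeal to the depth bounds $d(G)+f(G)\le \depth(S/J_G)\le n+2-\kappa(G)$ plays no role in a correct argument. The lemma is purely graph-theoretic; once you replace the cutset analysis by the disjoint-path count above, no algebra is needed.
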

\begin{proof}
Assume that $d(G)+f(G)+1=n+2-\kappa(G)$ and $\kappa(G) \geq 2$. For convenience, we set $d(G)=d, f(G) = f$ and $\kappa(G)=\kappa$. Then $\iv(G)=n-f=(d-2)+(\kappa+1).$ Let $u,v\in V(G)$ be such that $\dist_G(u,v)=d$. By \cite[Chapter III, Corollary 6]{Bollobas}, $u$ and $v$ can be joined by $\kappa$ vertex disjoint paths, say
$$ P_i:u,u_1^{i},\dots,u_{j_i-1}^{i},v$$
for $i = 1, \ldots, \kappa$. Note that $u_j^i$'s are internal vertices for all $i,j$. Therefore, for each $1\leq i\leq \kappa$, $j_i\geq d$, counting the internal vertices of these paths, we get $\iv(G)\geq (d-1)\kappa$. Therefore, $(d-1)\kappa\leq (d-2)+(\kappa+1)$. This implies that $(d-2)(\kappa-1)\leq 1$. Hence $d \leq 3$.
\end{proof}
The following example shows that $\kappa(G) \geq 2$ can not be removed from the previous lemma.
\begin{example}\label{kappa1}
Let $f \geq 3$ and $d \geq 2$ be integers. Let $H=K_f\cup_e K_3$. Let $v\in e$ and $G=H\cup_v P_{d}$. Then it follows that $\kappa(G)=1$, $d(G)=d$, $f(G)=f$ and $d(G)+f(G)+1=n+2-\kappa(G)$. 
\begin{minipage}{\linewidth}
\begin{minipage}{.4\linewidth}
This shows that if $\kappa(G)=1$, there the equality does not impose any condition on $d(G)$, implying that the condition $\kappa(G) \geq 2$ is necessary in \Cref{tech-lemma}.
%The graph $G$, given on the right, is $G=K_4\cup_e K_3\cup_v P_4$, where $v\in e$. 
%Note that $\kappa(G)=1$, $f(G)=4$, $d(G)=4$, and $f(G)+d(G)+1=n+2-\kappa(G)$.
\end{minipage}
\begin{minipage}{.55\linewidth}
\captionsetup[figure]{labelformat=empty}
	\begin{figure}[H]
		\begin{tikzpicture}[scale=1.4]
		\draw (-2.06,2.56)-- (-0.92,2.56);
		\draw (-0.92,2.54)-- (-0.92,1.86);
		\draw (-0.92,1.86)-- (-2.06,1.86);
		\draw (-2.06,2.56)-- (-2.06,1.86);
		\draw (-2.06,1.86)-- (-0.92,2.54);
		\draw (-2.06,2.56)-- (-0.96,1.86);
		\draw (-0.92,1.86)-- (-1.45,1.14);
		\draw (-2.08,1.86)-- (-1.45,1.14);
		\draw (-0.92,1.86)-- (0.26,1.86);
		\draw (0.26,1.86)-- (1.56,1.86);
		\draw (1.56,1.86)-- (2.88,1.86);
		\begin{scriptsize}
		\fill  (-2.06,2.56) circle (1.5pt);
		%\draw (-1.9,2.82) node {$A$};
		\fill  (-0.92,2.56) circle (1.5pt);
		\draw (-0.76,1.7) node {$v$};
		\draw (-1.45,1.7) node {$e$};
		\fill  (-2.06,1.86) circle (1.5pt);
		%\draw (-1.92,2.12) node {$C$};
		\fill  (-0.92,1.86) circle (1.5pt);
		%\draw (-0.8,2.08) node {$D$};
		\fill  (-1.45,1.14) circle (1.5pt);
		%\draw (-1.42,1.4) node {$E$};
		\fill  (0.26,1.86) circle (1.5pt);
		%\draw (0.4,2.08) node {$F$};
		\fill  (1.56,1.86) circle (1.5pt);
		%\draw (1.72,2.1) node {$G$};
		\fill  (2.88,1.86) circle (1.5pt);
		\draw (0.5,1.2) node {$G$};
		\end{scriptsize}
		\end{tikzpicture}
		%\caption{$G$}
	\end{figure}
\end{minipage}
\end{minipage}
\end{example}

Our first aim is to characterize graphs $G$ satisfying the property $d(G)+f(G)+1 = n+2-\kappa(G)$. It can be easily verified that if $n \leq 4$, then the above condition is never met. So, we assume that $n \geq 5$. 
\begin{theorem}\label{charac-thm}
Given integers $n\geq 5$, $\kappa, f, d  \in \mathbb{N}$, satisfying 
$d+f+1=n+2-\kappa,$
there exists a connected graph $G$ on $[n]$ with 
$f(G)=f,d(G)=d$ and $\kappa(G)=\kappa $
if and only if these integers satisfy one of the following conditions:
\begin{enumerate}
    \item $\kappa =1, f\geq 2$ and $d \geq 2,$
    \item $\kappa =2, f \geq 2$ and $d \in \{2, 3\},$
    \item $\kappa \geq 3, f \geq 2$ and $d=2.$
\end{enumerate}
%\begin{equation}\label{eq1-thm1}
%q=1, f\geq 3, d\geq 2 \text{ OR } \kappa =2,f\geq 2,d \in \{2, 3\}
%\text{ OR } \kappa \geq 3, f\geq 2, d=2.
% \text{ OR }  \text{ OR } .
%\end{equation}
\end{theorem}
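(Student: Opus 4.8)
The plan is to prove the two implications separately, using throughout that the numerical hypothesis $d+f+1=n+2-\kappa$ is equivalent to $\iv(G)=d+\kappa-1$ (because $\iv(G)=n-f$). Thus a graph realizes the given data exactly when it has diameter $d$, connectivity $\kappa$, and precisely $d+\kappa-1$ internal vertices, i.e.\ exactly one more than the Hibi--Madani minimum $d+\kappa-2$.

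For necessity, assume such a $G$ is given. It cannot be complete, since $G=K_n$ would force $d=1$, $f=n$, $\kappa=n-1$ and then the equality gives $n=1$; hence $d\ge2$. For the upper bounds on $d$ I reuse the counting of \Cref{tech-lemma}: choosing $u,v$ with $\dist_G(u,v)=d$ and $\kappa$ internally disjoint $u$--$v$ paths, each of length $\ge d$, yields $\iv(G)\ge(d-1)\kappa$, so $(d-1)\kappa\le d+\kappa-1$, that is $\kappa(d-2)\le d-1$. For $\kappa=2$ this gives $d\le3$ and for $\kappa\ge3$ it forces $d=2$; with $d\ge2$ this is exactly the range of $d$ in (1)--(3). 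The delicate point is $f\ge2$, which I would prove by contradiction: if $f\le1$ then $n=f+\iv(G)\le d+\kappa$, while the disjoint paths give $n\ge2+\kappa(d-1)$, so $(\kappa-1)(d-2)\le0$ and hence $\kappa=1$ or $d=2$. If $\kappa=1$ these force $n=d+1$ with a shortest $u$--$v$ path spanning $V(G)$; any chord would shorten $\dist_G(u,v)$, so $G=P_{d+1}$, which has $f=2$, a contradiction. If $d=2$ they force $n=\kappa+2$, i.e.\ $\kappa=n-2$, so $G$ has minimum degree at least $n-2$ and its complement is a matching; a single complement edge gives $G=K_n$ minus an edge with two simplicial vertices, and two or more complement edges give a graph with no simplicial vertex, so in both subcases $f\neq1$, a contradiction.

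For sufficiency I would give an explicit graph for every tuple in (1)--(3) with $n=d+f+\kappa-1\ge5$; the recurring task is to create exactly one internal vertex beyond the minimum without turning a simplicial vertex internal or disturbing $d$ or $\kappa$. For $\kappa=1$, $f\ge3$ the graph $(K_f\cup_eK_3)\cup_vP_d$ of \Cref{kappa1} has $\iv=d$. For $\kappa=1$, $f=2$ (so $d\ge3$ since $n\ge5$) I take the path $P_{d+1}$ on $x_0,\dots,x_d$ together with a new vertex $y$ adjacent to $x_0,x_1,x_2$ (this makes $x_1,x_2,y$ internal and keeps $x_0,x_d$ simplicial, so $\iv=d$), and for $f>2$ I attach the extra simplicial vertices to the triangle $\{x_1,x_2,y\}$. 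For $\kappa\ge2$, $d=2$ I take $K_{\kappa+1}$ on $t_0,\dots,t_\kappa$ and adjoin two vertices joined respectively to $\{t_0,\dots,t_{\kappa-1}\}$ and to $\{t_1,\dots,t_\kappa\}$; then every $t_i$ is internal, giving $\iv=\kappa+1$, while the two new vertices are simplicial, and further simplicial vertices joined to the whole clique raise $f$. For $\kappa=2$, $d=3$ I take the cycle $u,a_1,a_2,v,b_2,b_1$ together with the chords $a_1b_1$ and $a_2b_2$, so that $u,v$ are simplicial and $a_1,a_2,b_1,b_2$ are internal ($\iv=4$), and attach any further simplicial vertices to the edge $\{a_1,b_1\}$.

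I expect the main obstacles to be, first, the argument for $f\ge2$: the counting alone only pins down $n$, and one must identify the extremal graphs ($P_{d+1}$ and $K_n$ minus a matching) to exclude $f\le1$. Second, in the sufficiency direction, the verification that each family has connectivity exactly $\kappa$ and diameter exactly $d$ while its internal-vertex count stays equal to $d+\kappa-1$ as $f$ grows is where the bookkeeping must be done carefully, especially in the boundary cases where $n$ is as small as $5$.
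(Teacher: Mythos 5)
Your proposal is correct, and its overall skeleton --- necessity by counting internally disjoint $u$--$v$ paths, sufficiency by explicit families --- is the same as the paper's; however, two sub-arguments are genuinely different and worth comparing. For excluding $f\le 1$, the paper splits by $\kappa$: for $\kappa=1$ it uses the spanning shortest path (as you do), for $\kappa=2$ it appeals to the hypothesis $n\ge 5$, and for $\kappa\ge 3$ it argues that if $u$ were simplicial then $\{u,u^1,\dots,u^\kappa\}$ and hence $\{v,u^1,\dots,u^\kappa\}$ would be cliques, making $v$ simplicial as well. Your route is more uniform: the inequality $2+\kappa(d-1)\le n\le d+\kappa$ reduces $f\le 1$ to the cases $\kappa=1$ or $d=2$, and in the latter you observe that $\kappa=n-2$ forces $\delta(G)\ge n-2$, so the complement of $G$ is a nonempty matching, whence $f\in\{0,2\}$; this single argument handles $\kappa=2$ and $\kappa\ge 3$ simultaneously and does not even use $n\ge 5$. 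On sufficiency, the paper's witnesses (\Cref{kappa1}, \Cref{kappa1_f2}, \Cref{kappa2_d2}, \Cref{kappa2_d3}, \Cref{kappa3_d2}) are all chordal clique sums, while yours differ --- notably your $\kappa=2$, $d=3$ graph (a $6$-cycle with the two chords $a_1b_1$, $a_2b_2$) is non-chordal, which \Cref{induced-cycle-charac}(3) confirms is allowed --- and each of your families does realize the claimed $d$, $f$, $\kappa$ and internal-vertex count $d+\kappa-1$, including in the boundary case $n=5$. The only step you inherit rather than prove is that the internal vertices of the Menger paths are internal vertices of $G$ (needed for $\iv(G)\ge(d-1)\kappa$); since that is exactly the counting inside \Cref{tech-lemma}, which you are entitled to cite, it is not a gap.
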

\begin{proof}
First, observe that for a graph $G$, $d(G)=1$ if and only if $G$ is a complete graph for which the equality $d+f+1=n+2-\kappa$ does not hold. Therefore in our case $d\geq2$.

Assume that there exists a graph $G$ such that $d+f+1=n+2-\kappa$. Let $u, v \in V(G)$ such that $\dist_G(u,v) = d$ and let $P$ be a path of length $d$ joining $u$ and $v$.

Suppose $\kappa = 1$. If possible, let us assume that $f= 1$. Then $n=d+1$ and $\iv(G)=d$. Then $V(G) = V(P)$ and either $u$ or $v$ is an internal vertex. But this is not possible since $V(G)=V(P)$. Hence $f\neq 1$. Thus $\kappa =1, f\geq 2$ and $d \geq 2$. It may also be noted that the case $f = 2, d=2$ and $\kappa = 1$ is not possible since this would force $n = 4$.

Now suppose $\kappa = 2$. Then it follows from \Cref{tech-lemma} that $d \leq 3$. Suppose $f = 1$. In this case, $n=d+2$ and $\iv(G)=d+1$. So there is only one vertex in $V(G)\setminus V(P).$ Also by \cite[Chapter III, Corollary 6]{Bollobas}, there exist two vertex disjoint paths of length at least $d$ from $u$ to $v$. If $d = 3$, then $n \geq d+3$ which is not possible. Hence $d=2$. But then, $n=4$ which is a contradiction to our assumption that $n \geq 5$. Hence $f \geq 2$. Therefore, $\kappa =2, f \geq 2$ and $d \in \{2, 3\}$.
% Hence if $\kappa = 2$, then $f \geq 2$ and $d \in \{2, 3\}$. 

Assume that $\kappa \geq 3$. By \Cref{tech-lemma}, $d \leq 3$. We need to show  that $f \geq 2$ and $d \neq 3$.
%By \Cref{tech-lemma}, it is enough to 
If possible, let $d = 3$. Then $\iv(G)=\kappa+2$. By \cite[Chapter III, Corollary 6]{Bollobas}, there exist $\kappa$ vertex disjoint paths of length at least $3$ from $u$ to $v$. Each of these paths has at least two internal vertices of $G$. This implies that $2\kappa \leq \kappa+2$, and so $\kappa \leq 2$. This is a contradiction. Hence, $d = 2$. Now assume that $f = 1$. Then $d=2$, $\iv(G)=\kappa+1$ and $n=\kappa+2$. For $1 \leq i \leq \kappa$, let $P_i:u,u^i,v$ be a path from $u$ to $v$. Note that $u^1, u^2, \ldots, u^\kappa$ are distinct internal vertices. Therefore, precisely one among $u$ and $v$ is a simplicial vertex. For $u$ to be a simplicial vertex, $\{u, u^1, u^2, \ldots, u^\kappa\}$ must be a clique. This would imply that $\{v,u^1, u^2, \ldots, u^\kappa\}$ is a clique and consequently $v$ is also a simplicial vertex. This is a contradiction. Hence $f \neq 1$. Therefore $\kappa \geq 3, f\geq 2$ and $d =2$.

Conversely, suppose $(1), (2)$ or $(3)$ of the statement of the theorem is satisfied by $\kappa, f$ and $d$. Then the existence of a graph $G$ satisfying $d + f + 1  = n+2-\kappa$, follows from Examples \ref{kappa1}, \ref{kappa1_f2}, \ref{kappa2_d2}, \ref{kappa2_d3} and \ref{kappa3_d2}.
\end{proof}

\begin{example}\label{kappa1_f2}
	Let $d\geq 3$ and $\Sigma=K_{3}\cup_{e}K_{3}\cup_{v} P_{d-1}$, where $v\notin e$. 	Then  $|V(\Sigma)|=d+2$, $\kappa(\Sigma)=1$, $d(\Sigma)=d$,$f(\Sigma)=2$  and $d(\Sigma)+f(\Sigma)+1=n+2-\kappa(\Sigma)$.

	\begin{minipage}{\linewidth}
		\begin{minipage}{.55\linewidth}
		 The graph $\Sigma$, given on the right, is $\Sigma=K_3\cup_{e} K_3\cup_{v} P_3$. Note that $\kappa(\Sigma)=1$, $f(\Sigma)=2$, $d(\Sigma)=4$, and $d(\Sigma)+f(\Sigma)+1=n+2-\kappa(\Sigma)$.
		\end{minipage}
		\begin{minipage}{.4\linewidth}
			\captionsetup[figure]{labelformat=empty}
			\begin{figure}[H]
				\begin{tikzpicture}[scale=1.5]
\draw (-2.24,3.56)-- (-1,3.56);
\draw (-1,3.56)-- (-1,2.7);
\draw (-1,2.7)-- (-2.24,2.7);
\draw (-2.24,2.7)-- (-2.24,3.56);
\draw (-2.24,2.7)-- (-1,3.54);
\draw (-1,2.7)-- (0.16,2.7);
\draw (0.16,2.7)-- (1.34,2.7);
%\draw (1.34,2.7)-- (2.48,2.7);
\begin{scriptsize}
\fill  (-2.24,3.56) circle (1.5pt);
%\draw (-2.08,3.82) node {$A$};
\fill (-1,3.56) circle (1.5pt);
\draw (-1.7,3.2) node {$e$};
\fill (-2.24,2.7) circle (1.5pt);
%\draw (-2.1,3) node {$C$};
\fill (-1,2.7) circle (1.5pt);
\draw (-0.84,2.9) node {$v$};
\fill (0.16,2.7) circle (1.5pt);
%\draw (0.32,2.94) node {$E$};
\fill (1.34,2.7) circle (1.5pt);
%\draw (1.48,2.9) node {$F$};
%\fill (2.48,2.7) circle (1.5pt);
\draw (-0.7,2.5) node {$\Sigma $};
\end{scriptsize}
\end{tikzpicture}
				%\caption{$\Gamma$}
			\end{figure}
		\end{minipage}
	\end{minipage}
\end{example}

\begin{example}\label{kappa2_d2}
	Let $f\geq 2$ and $\Gamma=K_{3}\cup_{e}K_{f+1}\cup_{e'} K_{3}, \text{ where }  \mid e\cap e' \mid ~= 1.$ 		Then  $|V(\Gamma)|=f+3$, $\kappa(\Gamma)=2$, $d(\Gamma)=2$,$f(\Gamma)=f$  and $d(\Gamma)+f(\Gamma)+1=n+2-\kappa(\Gamma)$.

	\begin{minipage}{\linewidth}
		\begin{minipage}{.55\linewidth}
 The graph $\Gamma$, given on the right, is $\Gamma=K_3\cup_{e} K_4\cup_{e'} K_3$. Note that $\kappa(\Gamma)=2$, $f(\Gamma)=3$, $d(\Gamma)=2$, and $d(\Gamma)+f(\Gamma)+1=n+2-\kappa(\Gamma)$.
		\end{minipage}
		\begin{minipage}{.4\linewidth}
			\captionsetup[figure]{labelformat=empty}
			\begin{figure}[H]
				\begin{tikzpicture}[scale=1.5]
	\draw (-1.5,3.72)-- (-0.48,4.18);
	\draw (-0.48,4.18)-- (-0.48,3.26);
	%\draw (-1.78,3.26)-- (-1.78,4.18);
	\draw (-1.5,3.72)-- (-0.48,3.26);
	%\draw (-0.48,3.26)-- (-1.78,3.26);
	%\draw (-1.78,3.26)-- (-0.48,4.18);
	\draw (-0.48,4.18)-- (0.86,4.18);
	\draw (0.86,4.18)-- (0.86,3.26);
	\draw (0.86,3.26)-- (-0.48,3.26);
	\draw (-0.48,3.26)-- (0.86,4.18);
	\draw (-0.48,4.18)-- (0.86,3.26);
	\draw (0.86,3.26)-- (0.18,2.34);
	\draw (-0.48,3.26)-- (0.18,2.34);
	\begin{scriptsize}
	\fill  (-1.5,3.72) circle (1.5pt);
%	\fill  (-1.78,4.18) circle (1.5pt);
	%\draw (-0.62,3.44) node {$A$};
	\fill  (-0.48,4.18) circle (1.5pt);
	%\draw (-0.32,4.42) node {$i$};
%	\fill  (-1.78,3.26) circle (1.5pt);
%	\draw (-1.66,3.52) node {$C$};
	\fill  (-0.48,3.26) circle (1.5pt);
	\draw (-0.6,3.75) node {$e$};
	\fill  (0.86,4.18) circle (1.5pt);
	%\draw (0.06,4.42) node {$E$};
	\fill  (0.86,3.26) circle (1.5pt);
	\draw (0.25,3.1) node {$e'$};
	\fill  (0.18,2.34) circle (1.5pt);
	\draw (-0.5,2.6) node {$\Gamma$};
	\end{scriptsize}
				\end{tikzpicture}
				%\caption{$\Gamma$}
			\end{figure}
		\end{minipage}
	\end{minipage}
\end{example}
\begin{example}\label{kappa2_d3}
	Let $f\geq 2$ and $\Omega=K_{3}\cup_{e}K_{f+2}\cup_{e'} K_{3}, \text{ where } e\cap e'=\emptyset .$
	Then $|V(G)|=f+4$, $\kappa(G)=2$, $d(\Omega)=3$, $f(\Omega)=f$, and $d(\Omega)+f(\Omega)+1=n+2-\kappa(\Omega)$.
	
	\begin{minipage}{\linewidth}
		\begin{minipage}{.4\linewidth}
			The graph $\Omega$, given on the right, is $\Omega=K_3\cup_{e} K_4\cup_{e'} K_3$. Note that $\kappa(\Omega)=2$, $f(\Omega)=2$, $d(\Omega)=3$, and $d(\Omega)+f(\Omega)+1=n+2-\kappa(\Omega)$.
		\end{minipage}
		\begin{minipage}{.6\linewidth}
			\captionsetup[figure]{labelformat=empty}
			\begin{figure}[H]
				\begin{tikzpicture}[scale=1.5]
				\draw (-1.5,3.72)-- (-0.48,4.18);
				\draw (-0.48,4.18)-- (-0.48,3.26);
				%\draw (-1.78,3.26)-- (-1.78,4.18);
				\draw (-1.5,3.72)-- (-0.48,3.26);
				%\draw (-0.48,3.26)-- (-1.78,3.26);
				%\draw (-1.78,3.26)-- (-0.48,4.18);
				\draw (-0.48,4.18)-- (0.86,4.18);
				\draw (0.86,4.18)-- (0.86,3.26);
				\draw (0.86,3.26)-- (-0.48,3.26);
				\draw (-0.48,3.26)-- (0.86,4.18);
				\draw (-0.48,4.18)-- (0.86,3.26);
				%\draw (0.86,3.26)-- (0.18,2.34);
				%\draw (-0.48,3.26)-- (0.18,2.34);
				\draw (0.86,4.18)-- (1.8,3.72);
				%\draw (0.86,4.18)-- (2.18,4.18);
				\draw (0.86,3.26)-- (1.8,3.72);
				%\draw (2.18,4.18)-- (2.18,3.26);
				%\draw (0.86,3.26)-- (2.18,4.18);
				\begin{scriptsize}
				\fill  (-1.5,3.72) circle (1.5pt);
				%\fill  (-1.78,4.18) circle (1.5pt);
				%\draw (-1.62,4.44) node {$A$};
				\fill  (-0.48,4.18) circle (1.5pt);
				%\draw (-0.32,4.42) node {$i$};
				%\fill  (2.18,4.18) circle (1.5pt);
				%\fill  (-1.78,3.26) circle (1.5pt);
				%\draw (-1.66,3.52) node {$C$};
				\fill  (-0.48,3.26) circle (1.5pt);
				\draw (-0.65,3.7) node {$e$};
				\fill  (0.86,4.18) circle (1.5pt);
				%\draw (1.06,4.42) node {$l$};
				\fill  (0.86,3.26) circle (1.5pt);
				\draw (1.05,3.7) node {$e'$};
				%\fill  (2.18,3.26) circle (1.5pt);
				%\fill  (0.18,2.34) circle (1.5pt);
				\draw (0.25,3) node {$\Omega$};
				\fill  (1.8,3.72) circle (1.5pt);
				\end{scriptsize}
				\end{tikzpicture}
				%\caption{$\Omega$}
			\end{figure}
		\end{minipage}
	\end{minipage}
\end{example}

\begin{example}\label{kappa3_d2}
    Let $f\geq 2,\kappa \geq 3$ and 
    $\Delta =K_{\kappa +1} \cup_{K_\kappa }K_{f+\kappa -1}\cup_{K'_\kappa }K_{\kappa +1}, \text{ where } |V(K_\kappa )\cap V(K'_\kappa)|=\kappa-1.$
    Note that $V(\Delta)=f+\kappa +1$, $d(\Delta)=2$, $f(\Delta)=f$, $\kappa(\Delta)=\kappa$ and $d(\Delta)+f(\Delta)+1=n+2-\kappa(\Delta).$
    
    \begin{minipage}{\linewidth}
		\begin{minipage}{.4\linewidth}
			The graph, given on the right, is $\Delta=K_4\cup_{K_3} K_4\cup_{K'_3}  K_4$, where $V(K_3)=\{i,j,l\}$ and $V(K'_3)=\{j,k,l\}$. Note that $\kappa(\Delta)=3$, $f(\Delta)=2$, $d(\Delta)=2$, and $d(\Delta)+f(\Delta)+1=n+2-\kappa(\Delta)$.
		\end{minipage}
		\begin{minipage}{.6\linewidth}
			\captionsetup[figure]{labelformat=empty}
			\begin{figure}[H]
				\begin{tikzpicture}[scale=1.5]
				\draw (-1.5,3.72)-- (-0.48,4.18);
				\draw (-0.48,4.18)-- (-0.48,3.26);
				\draw (-1.5,3.72)-- (-0.48,3.26);
				\draw (-0.48,4.18)-- (0.86,4.18);
				\draw (0.86,4.18)-- (0.86,3.26);
				\draw (0.86,3.26)-- (-0.48,3.26);
				\draw (-0.48,3.26)-- (0.86,4.18);
				\draw (-0.48,4.18)-- (0.86,3.26);
				\draw (0.86,4.18)-- (1.8,3.72);
				\draw (0.86,3.26)-- (1.8,3.72);
				\draw (-1.5, 3.72)-- (0.86,4.18);
				\draw (1.8, 3.72)-- (-0.48,3.26);
			
				\begin{scriptsize}
				\fill  (-1.5,3.72) circle (1.5pt);
				\fill  (-0.48,4.18) circle (1.5pt);
				\draw (-0.32,4.42) node {$i$};
				\fill  (-0.48,3.26) circle (1.5pt);
				\draw (-0.49,3) node {$j$};
				\fill  (0.86,4.18) circle (1.5pt);
				\draw (1.06,4.42) node {$l$};
				\fill  (0.86,3.26) circle (1.5pt);
				\draw (1.05,3) node {$k$};
				\draw (0.2,3) node {$\Delta$};
				\fill  (1.8,3.72) circle (1.5pt);
				\end{scriptsize}
				\end{tikzpicture}
				%\caption{$\Delta$}
			\end{figure}
		\end{minipage}
	\end{minipage}
\end{example}

%From Theorem \ref{charac-thm}, one may easily ask for such characterization. In this section, we characterize graphs $G$ such that $d(G)+f(G)+1=n+2-\kappa(G)$.
%First, we prove that if $G$  satisfies the condition $d(G)+f(G)+1=n+2-\kappa(G)$, then either $G$ is chordal or $G$ has one induced cycle of length 4. 
Using \Cref{charac-thm}, we now give some necessary conditions for a graph to satisfy the property $d(G)+f(G)+1 = n+2-\kappa(G)$.

\begin{theorem}\label{induced-cycle-charac}
Let $G$ be a graph on $[n]$ such that $d(G)+f(G)+1=n+2-\kappa(G).$ 
\begin{enumerate}
    \item If $\kappa(G)=1$, then either $G$ is chordal or $G$ has precisely one induced $C_4$ and has no induced $C_l$ for $l \geq 5$. 
    \item If $\kappa(G)\geq 2$ and $d(G)=2$, then $G$ is a chordal graph.
    \item If $\kappa(G)=2$ and $d(G)=3$, then either $G$ is  chordal or $G$ has precisely one induced $C_4$ and has no induced $C_l$ for $l \geq 5$.
\end{enumerate}
\end{theorem}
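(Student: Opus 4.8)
The plan is to reduce all three statements to a count of internal vertices. Writing $d=d(G)$, $f=f(G)$, $\kappa=\kappa(G)$, the hypothesis $d+f+1=n+2-\kappa$ is equivalent to $\iv(G)=n-f=d+\kappa-1$. The decisive elementary observation is that every vertex lying on an induced cycle of length $\ge 4$ is an internal vertex of $G$: its two neighbours on the cycle are at cyclic distance $2$, hence non-adjacent, so its neighbourhood is not a clique. Consequently every induced cycle of length $\ge4$ is contained in $G[\mathcal I]$, where $\mathcal I$ is the set of internal vertices and $|\mathcal I|=d+\kappa-1$. Thus it suffices to understand $G[\mathcal I]$, and the whole argument rests on pinning this set down using a diameter pair $u,v$ (with $\dist_G(u,v)=d$), internally disjoint $u$–$v$ paths, and vertex separators. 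Throughout I would choose systems of internally disjoint paths of minimum total length; such paths are automatically chordless, so the interior vertices of each have two non-adjacent path-neighbours and are therefore internal in $G$ — this is the device that turns the Menger/Bollob\'as bound used in \Cref{tech-lemma} into a genuine count inside $\mathcal I$.

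For the case $\kappa\ge2$, $d=2$ (item (2)) we have $|\mathcal I|=\kappa+1$, and I would argue by contradiction. Suppose $G$ has an induced cycle of length $\ge4$ and pick two non-adjacent vertices $p,r$ on it; both are internal. Since $p\not\sim r$, any $p$–$r$ separator is a vertex cut of $G$, so a minimum $p$–$r$ separator $S$ satisfies $|S|\ge\kappa(G)=\kappa$. Because such an $S$ is inclusion-minimal, every vertex of $S$ has a neighbour in the component of $p$ and a neighbour in the component of $r$ of $G\setminus S$; these neighbours are non-adjacent, so each vertex of $S$ is internal. As $p,r\notin S$ are also internal, $\iv(G)\ge |S|+2\ge\kappa+2$, contradicting $\iv(G)=\kappa+1$. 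Hence $G$ is chordal. This is exactly where $d=2$ enters: for $d=3$ the budget is $\kappa+2$ and no contradiction arises.

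For the case $\kappa=2$, $d=3$ (item (3)) we have $|\mathcal I|=4$. Taking two internally disjoint, minimum-total-length $u$–$v$ paths, each is chordless of length $\ge d=3$ and so carries at least two interior vertices, all internal in $G$; as $\iv(G)=4$ this forces each path to have length exactly $3$, forces $\mathcal I$ to consist precisely of the four interior vertices, and forces $u,v$ to be simplicial. Since any induced cycle of length $\ge4$ lives in the four-vertex set $\mathcal I$, there can be no induced $C_\ell$ with $\ell\ge5$, and an induced $C_4$, if present, must use all of $\mathcal I$ and is therefore unique; if $G[\mathcal I]$ is not a $4$-cycle then $G$ is chordal.

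Finally, for the case $\kappa=1$ (item (1)) we have $|\mathcal I|=d$. Fix a shortest $u$–$v$ path $P\colon u=w_0,\dots,w_d=v$; it is induced, its $d-1$ interior vertices are internal, so exactly one further internal vertex $z$ remains, and $G[\mathcal I]$ is the chordless path $w_1\cdots w_{d-1}$ together with $z$ (where $z$ is $u$, $v$, or a vertex off $P$). The key metric input is that subpaths of $P$ are again shortest, so $\dist_G(w_a,w_b)=|a-b|$; hence if $z$ is adjacent to $w_a$ and $w_b$ then $|a-b|\le 2$, so all neighbours of $z$ on $P$ lie among three consecutive indices. Every induced cycle of $G$ must pass through $z$ and a chordless subpath of $P$ between two neighbours of $z$, so it has length at most $4$; a $C_4$ occurs exactly when $z$ is adjacent to $w_c,w_{c+2}$ but not $w_{c+1}$, and this can hold for at most one value of $c$, giving at most one induced $C_4$ and no longer induced cycle. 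I expect the bookkeeping in this last case — extracting uniqueness and the absence of $C_{\ge5}$ from the single-shortcut distance constraint, while simultaneously ruling out that $z$ being an endpoint could create additional cycles — to be the main technical point, together with the uniform justification (via the minimum-length/chordless choice) that interior vertices of the chosen paths are indeed internal in $G$.
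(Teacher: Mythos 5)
Your proposal is correct, and although it runs on the same engine as the paper's proof---the hypothesis pins $\iv(G)=d(G)+\kappa(G)-1$, and every vertex of an induced cycle of length at least $4$ is internal, so all such cycles live inside the set $\mathcal{I}$ of internal vertices---your treatment of parts (1) and (2) takes a genuinely different route. For (2), the paper invokes Menger/Bollob\'as to get $\kappa$ internally disjoint paths between two non-adjacent cycle vertices and counts one internal vertex per path; you instead take an inclusion-minimal separator $S$ between them, observe that each vertex of $S$ has neighbours in both components of $G\setminus S$, which are non-adjacent, hence each vertex of $S$ is internal, and reach the same count $\iv(G)\ge \kappa+2$. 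Your version is cleaner on a point the paper leaves implicit: interior vertices of arbitrary disjoint paths need not be internal in $G$ unless the paths are chosen chordless (your minimum-total-length device supplies exactly this), whereas internality of minimal-separator vertices is automatic. For (1), the paper splits into two cases (a diameter endpoint internal or not) and derives contradictions by exhibiting explicit shorter $u$--$v$ paths; you instead prove the single ``window'' constraint that all neighbours of the extra internal vertex $z$ on $P$ lie among three consecutive indices, and read off directly both the absence of induced $C_\ell$ for $\ell\ge 5$ and the uniqueness of an induced $C_4$. This also uniformly disposes of the possibility of two induced $C_4$'s both passing through $z$, a configuration the paper's uniqueness argument does not explicitly address. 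Part (3) is essentially the same counting in both (your two-path structure is extra information, not needed for the conclusion). Finally, the ``bookkeeping'' you flag at the end is not a real gap: if $z\in\{u,v\}$ then $\mathcal{I}\subseteq V(P)$, so $G[\mathcal{I}]$ is an induced path and contains no cycle, whence $G$ is chordal---this is precisely the paper's Case I---and your chordless-path choice already justifies that interior vertices of the chosen paths are internal.
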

\begin{proof}
First we treat the case when $\kappa(G)=1$. If $d(G) + f(G) + 1 = n+2-\kappa(G)$, then $\kappa(G) =  1$ if and only $d(G) + f(G) = n$.  Hence we study graphs $G$ with $d(G)+f(G)=n$. Set $d=d(G)$ and $\kappa = \kappa(G)$. Then, $\iv(G)=n-f(G)=d.$ Let $u,v\in V(G)$ be such that $\dist_G(u,v)=d$. Let $P:u=u_0,u_1,\dots,u_{d-1},u_d=v$ be an induced path from $u$ to $v$. Since $u_1, \ldots, u_{d-1}$ are internal vertices, there is precisely one more internal vertex in $[n]\setminus \{u_1,\dots,u_{d-1}\}.$ This can happen in two ways:

\textbf{Case-I:} Suppose either $u$ or $v$ is an internal vertex. Without loss of generality we assume that $u$ is an internal vertex.
Suppose $G$ contains an induced cycle of length $l\geq 4$, say $C_l$. Let $V(C_l)=\{v_1,\dots,v_l\}$. Since $v_i$'s are internal vertices, $\{v_1,\dots,v_l\}\subseteq \{u,u_1,\dots,u_{d-1}\}$. This implies that either there exists $3\leq i \leq d-1$ such that $\{u,u_i\}\in E(G)$ or there exist integers $j, k$, $k > j+2$ such that $\{u_j, u_k\} \in E(G)$. In either case,  this would force that $\dist_G(u,v) < d$ which is a contradiction. Therefore, $G$ can not contain an induced cycle of  length $\geq 4$. So, $G$ is a chordal graph.

\textbf{Case-II:} Suppose neither $u$ nor $v$ is an internal vertex. In this case we prove that either $G$ is chordal or $G$ can have at most one induced cycle of length 4 and can have no induced cycle of length bigger than 4. Assume that $G$ is not chordal. Suppose $G$ contains an induced cycle of length of $l\geq 5$, say $C_l$. Let $V(C_l)=\{v_1,\dots,v_l\}$. Since $v_i$'s are internal vertices, $u_1, \ldots, u_{d-1}$ is a path and outside $V(P)$ there is exactly one internal vertex, $|\{v_1,\dots,v_l\}\cap \{u_1,\dots,u_{d-1}\}|=l-1$. Without loss of generality, we may assume that $v_l\notin \{u_1,\dots,u_{d-1}\}$ and $u_i = v_i$ for $i=1, \ldots, l-1$. 

\noindent
\begin{minipage}{\linewidth}
		\begin{minipage}{.4\linewidth}
		Then $P':u,v_1,v_l,v_{l-1},u_l,\ldots,u_{d-1},v$ is a path of length $d-(l-2)+2= d-l+4 \leq d-1$ from $u$ to $v$ which contradicts the assumption that $\dist_G(u,v) = d$.
		\end{minipage}
		\begin{minipage}{.6\linewidth}
    \captionsetup[figure]{labelformat=empty}
			
			\begin{figure}[H]
\begin{tikzpicture}[scale=0.7]
\draw (-2.6,1.5)-- (-0.5,1.5);
\draw (-0.5,1.5)-- (1.52,1.5);
\draw[dotted] (1.52,1.5)-- (3.46,1.5);
\draw (5.24,1.5)-- (3.46,1.5);
\draw (7.4,1.5)-- (9.12,1.5);
\draw (-0.5,1.5)-- (2.42,3.08);
\draw (2.42,3.08)-- (5.24,1.5);
\draw[dotted] (5.3, 1.5) -- (7.3,1.5);
\begin{scriptsize}
\fill  (-2.6,1.5) circle (1.5pt);
\draw (-2.44,1.7) node {$u$};
\fill  (-0.5,1.5) circle (1.5pt);
\draw (-0.7,1.7) node {$u_1$};
\fill  (1.52,1.5) circle (1.5pt);
\draw (1.68,1.7) node {$u_2$};
\fill  (3.46,1.5) circle (1.5pt);
\draw (3.62,1.7) node {$u_{l-2}$};
\fill  (5.24,1.5) circle (1.5pt);
\draw (5.4,1.7) node {$u_{l-1}$};
%\fill  (5.75,1.5) circle (1pt);
%\fill  (6,1.5) circle (1pt);
%\fill  (6.25,1.5) circle (1pt);
%\fill  (6.5,1.5) circle (1pt);
%\fill  (6.75,1.5) circle (1pt);
\fill  (7.4,1.5) circle (1.5pt);
\draw (7.54,1.7) node {$u_{d-1}$};
\fill  (9.12,1.5) circle (1.5pt);
\draw (9.28,1.7) node {$v$};
\fill  (2.42,3.08) circle (1.5pt);
\draw (2.58,3.34) node {$v_l$};
\end{scriptsize}
				\end{tikzpicture}
			\end{figure}
			\end{minipage}
	\end{minipage}
Therefore, $G$ does not have an induced cycle of length $5$ or more.
\vskip 4mm
Now if possible assume that $G$ has at least two induced cycles of length 4. Since $G$ has one internal vertex belonging to $[n]\setminus \{u_1,\dots,u_{d-1}\}$, all the vertices of one of these two cycles is a subset of $\{u_1,\dots,u_{d-1}\}$. This would again force that $\dist_G(u,v) \leq d-2$, which is a contradiction. Hence if $G$ is not chordal, then $G$ contains at most one induced $C_4$. This completes the proof of $(1)$.

Now we treat the case $\kappa(G)\geq 2$. Then it follows from \Cref{charac-thm} that $2\leq d\leq 3$, and $\iv(G)=n-f(G)=d-1+\kappa.$ Suppose $d = 2$. Then $\iv(G)=\kappa+1 $. Assume that $G$ has an induced cycle $C_l$ $(l\geq 4)$ with $V(C_l)=\{v_1,\dots,v_l\}$. By \cite[Chapter III, Corollary 6]{Bollobas}, there exist $\kappa$ vertex-disjoint paths of length $\geq 2$ from $v_1$ to $v_3$. Each such path contains at least one internal vertex. Since $v_1$ and $v_3$ are vertices of an induced cycle of length $\geq 4$, these are internal vertices as well. Hence $\iv(G) \geq \kappa + 2$. This is a contradiction. Hence, $G$ is chordal. This proves $(2)$.

Assume that $d=3$. Then it follows from Theorem \ref{charac-thm} that $\kappa=2$, and so $\iv(G)=4$. Since all the vertices of cycles are internal vertices, $G$ has no induced cycle of length $\geq 5$ and $G$ can have at most one induced cycle of length 4. This completes the proof of the theorem.
\end{proof}

\section{Depth}
In this section, we compute the depth of $S/J_G$ for graphs satisfying $d(G)+f(G)+1 = n+2-\kappa(G)$. In this computation, the graphs $G_v, G_v \setminus v$ and $G\setminus v$, for some internal vertex $v$, play important roles via the Ohtani Lemma, (\Cref{ohtani-lemma}) and \eqref{ohtani-ses}. We begin this section with a couple of technical lemmas which are useful in proving the main results.
\begin{lemma}\label{g-v}
Let $G$ be a graph and $v$ be an internal vertex. Then,
\begin{enumerate}
    \item if $G \setminus v$ is connected, then $d(G) \leq d(G\setminus v)$;
    \item $f(G) \leq f(G\setminus v)$.
\end{enumerate}
\end{lemma}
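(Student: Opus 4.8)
The plan is to treat the two parts separately, both resting on elementary monotonicity properties of vertex deletion: deleting a vertex can only (weakly) increase distances among the surviving vertices, and it cannot destroy the simpliciality of the remaining vertices. I would handle part (2) first, since it is the cleaner of the two.

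For part (2), let $u \neq v$ be a simplicial vertex of $G$, so that its closed neighbourhood $N_G[u]$ is a clique. Deleting $v$ only shrinks this to $N_{G\setminus v}[u] = N_G[u]\setminus\{v\}$, which is again a clique; hence $u$ remains a simplicial vertex of $G\setminus v$ (and if $v$ happened to be $u$'s only neighbour, then $u$ becomes isolated and is trivially simplicial). Since $v$ is internal, it is not itself counted among the simplicial vertices of $G$, so every simplicial vertex of $G$ survives as a simplicial vertex of $G\setminus v$. This gives an injection of the simplicial vertices of $G$ into those of $G\setminus v$, and hence $f(G)\le f(G\setminus v)$.

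For part (1), I would first record the basic inequality $\dist_{G\setminus v}(x,y)\ge \dist_G(x,y)$ for all $x,y\in V(G)\setminus\{v\}$: any walk in $G\setminus v$ is also a walk in $G$, so removing $v$ cannot create shorter paths. Maximising over such pairs gives $d(G\setminus v)\ge \max\{\dist_G(x,y): x,y\neq v\}$. Now choose $u,w$ with $\dist_G(u,w)=d(G)$. If neither $u$ nor $w$ equals $v$, then $d(G\setminus v)\ge \dist_G(u,w)=d(G)$ and we are done. The remaining case is when every diametral pair passes through $v$, say $u=v$; here I would exploit that $v$ is internal (so $N_G(v)$ is not a clique and $v$ has two non-adjacent neighbours) together with the connectedness of $G\setminus v$ to produce a pair of vertices, both different from $v$, still lying at distance at least $d(G)$ in $G\setminus v$ --- for instance by trying to show that a suitable neighbour $a$ of $v$ satisfies $\dist_{G\setminus v}(a,w)\ge d(G)$.

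I expect this last case to be the \emph{main obstacle}. The easy triangle-inequality bound only yields $\dist_G(a,w)\ge d(G)-1$ for a neighbour $a$ of $v$, and a priori every neighbour of $v$ could be strictly closer to $w$ than $v$ is, so that deleting $v$ drops the realised distance to $d(G)-1$; closing the gap back up to $d(G)$ is exactly where the internality of $v$ and the connectivity of $G\setminus v$ must enter essentially, and where the argument is most delicate. I would concentrate my effort there, checking carefully whether the non-adjacency of two neighbours of $v$ genuinely forces the required distance to be retained in $G\setminus v$; the remaining ingredients are routine.
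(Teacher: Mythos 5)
Your proof of part (2) is correct and complete: for a simplicial vertex $u\neq v$ of $G$, the set $N_{G\setminus v}[u]=N_G[u]\setminus\{v\}$ is again a clique, and since the internal vertex $v$ is not itself simplicial, every simplicial vertex of $G$ remains simplicial in $G\setminus v$. This is in fact more self-contained than the paper, which simply cites an external lemma for this part.

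For part (1), the case you flagged as the main obstacle is not merely delicate --- it is false, so no amount of effort will close it. Take $G$ on the vertex set $\{v,a,b,c,w\}$ with edges $\{v,a\},\{v,b\},\{a,c\},\{b,c\},\{c,w\}$; that is, a four-cycle $v,a,c,b$ with a pendant vertex $w$ attached to $c$. Then $v$ is internal (its two neighbours $a,b$ are non-adjacent), $G\setminus v$ is a connected star, but $d(G)=\dist_G(v,w)=3$ whereas $d(G\setminus v)=2$. So in the situation where every diametral pair of $G$ contains $v$, internality of $v$ and connectedness of $G\setminus v$ do not rescue the inequality, and the lemma as stated fails. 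What you did prove, namely $d(G\setminus v)\geq\max\{\dist_G(x,y): x,y\in V(G)\setminus\{v\}\}$, is exactly what the paper's own proof establishes: it bounds $\dist_G(z,z')\leq\dist_{G\setminus v}(z,z')$ only for $z,z'\in V(G)\setminus\{v\}$ and then concludes ``hence $d(G)\leq d(G\setminus v)$'', silently ignoring pairs through $v$; the published proof therefore contains precisely the gap you identified. The lemma is harmless where the paper actually invokes it (the proof of \Cref{claim2}): there the diameter of $G$ is attained by two simplicial vertices, necessarily distinct from the internal vertex being removed, so the restricted inequality that you and the paper both proved is all that is needed. But as a freestanding statement, part (1) requires the additional hypothesis that some diametral pair of $G$ avoids $v$.
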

\begin{proof}
For any two vertices $z, z'$ in $G\setminus v$, a shortest induced path in $G\setminus v$ joining $z$ and $z'$ will be a path in $G$, but need not necessarily be an induced path. This implies that $\dist_{G}(z,z') \leq \dist_{G\setminus v}(z,z')$. Hence $d(G) \leq d(G\setminus v)$. This proves (1).

(2) follows directly from \cite[Lemma 3.2]{Arv-Jaco}.
\end{proof}
We now prove a result connecting the diameter of a graph $G$ and the diameter of $G_v$ for some internal vertex $v$.

\begin{lemma}\label{diameter-lemma}
Let $G$ be any graph and $v$ be any internal vertex in $G$. Then for any two vertices $z,z'\in V(G)$, $$\dist_G(z,z')-1\leq \dist_{G_v}(z,z') \leq \dist_{G}(z,z').$$
In particular, $d(G)-1 \leq d(G_v) \leq d(G).$
\end{lemma}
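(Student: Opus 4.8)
The plan is to exploit the explicit description of $G_v$: it has the same vertex set as $G$, and its edge set is $E(G)$ together with all pairs $\{a,b\}$ with $a,b\in N_G(v)$. In particular $G$ is a spanning subgraph of $G_v$, and every edge of $E(G_v)\setminus E(G)$ joins two neighbours of $v$ in $G$. The upper bound $\dist_{G_v}(z,z')\le \dist_G(z,z')$ is then immediate, since any path of $G$ between $z$ and $z'$ is also a path of $G_v$.

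For the lower bound I would fix a shortest path $P\colon z=w_0,w_1,\dots,w_k=z'$ in $G_v$, so that $k=\dist_{G_v}(z,z')$ and the $w_i$ are pairwise distinct, and call an edge of $P$ \emph{new} if it lies in $E(G_v)\setminus E(G)$. The crucial step is to show that $P$ has at most one new edge. Suppose $\{w_i,w_{i+1}\}$ and $\{w_j,w_{j+1}\}$ were new edges with $i<j$; then $w_i\in N_G(v)$ and $w_{j+1}\in N_G(v)$, and $w_i\ne w_{j+1}$ since $i<j+1$, so $\{w_i,w_{j+1}\}\in E(G_v)$. Replacing the subpath $w_i,w_{i+1},\dots,w_{j+1}$ by the single edge $\{w_i,w_{j+1}\}$ produces a walk in $G_v$ from $z$ to $z'$ of length $k-(j-i)<k$, contradicting the minimality of $k$. (This one argument also covers the sub-case $j=i+1$ of two consecutive new edges, which is the only slightly delicate point.)

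Granting the claim, there are two cases. If $P$ has no new edge, then $P$ is a path in $G$ and $\dist_G(z,z')\le k$. If $P$ has exactly one new edge $\{w_i,w_{i+1}\}$, then $w_i,w_{i+1}\in N_G(v)$, so replacing that edge by the two edges $\{w_i,v\}$ and $\{v,w_{i+1}\}$ of $G$ yields a walk in $G$ from $z$ to $z'$ of length $k+1$, hence a path in $G$ of length at most $k+1$; thus $\dist_G(z,z')\le k+1$. In both cases $\dist_G(z,z')-1\le k=\dist_{G_v}(z,z')$, as required. The ``in particular'' assertion follows by taking the maximum over all pairs $z,z'$: the upper bound gives $d(G_v)\le d(G)$ directly, and choosing $z,z'$ with $\dist_G(z,z')=d(G)$ gives $d(G)-1\le \dist_{G_v}(z,z')\le d(G_v)$. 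The only place that needs care is the at-most-one-new-edge claim; once the shortcut through a single $G_v$-edge is set up correctly, the remaining steps are routine.
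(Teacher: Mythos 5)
Your proof is correct and follows essentially the same route as the paper: the upper bound comes from $G$ being a spanning subgraph of $G_v$, and the lower bound from showing a shortest path in $G_v$ uses at most one edge of $E(G_v)\setminus E(G)$ and then rerouting that edge through $v$. The only cosmetic difference is that you justify the at-most-one-new-edge claim by a direct shortcut argument on a shortest path, whereas the paper observes that all new edges lie in a single clique and an induced path can meet a clique in at most one edge; your careful phrasing via walks also quietly handles the possibility that $v$ lies on the chosen path, a point the paper glosses over.
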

\begin{proof}
Note that an induced path in $G$ need not necessarily be an induced path in $G_v$. Thus, for any $z,z'\in V(G)$, $\dist_{G_v}(z,z')\leq \dist_{G}(z,z')$. 
%Now we show that $d(G)-1\leq d(G_v)$. %Let $d(G)=d$ and $z,z'\in V(G)$ be such that $\dist_{G}(z,z')=d$. 
Let $P:z=z_1,\dots,z_{d'},z_{d'+1}=z'$ be a shortest induced path from $z$ to $z'$ in $G_v$.  
%We want to show that $d'\geq d-1$. 
If $E(P)\subseteq E(G)$, then $P$ remains an induced path in $G$. Hence, $\dist_G(z,z')\leq d'$. Observe that all the edges in $E(G_v)\setminus E(G)$ belong to a single clique in $G_v$. If $E(P)\nsubseteq E(G)$, then there exists only one edge $e\in E(P)\setminus E(G)$. Let $e=\{z_i,z_{i+1}\}$ for some $1\leq i\leq d'$ and $z_i,z_{i+1}\in N_G(v)$. Hence, in $G$, we have an induced path $z=z_1, \ldots, z_i, v, z_{i+1}, \ldots, z_{d'}=z'$ in $G$. Therefore $\dist_G(z,z') \leq d'+1$.
\end{proof}
It is easy to observe that the structure of $G_v$ and $G_v\setminus v$ are similar and the inequalities in the above lemma remain the same if we replace $G_v$ by $G_v \setminus v$.

Let $G$ be a graph on $[n]$ such that $d(G)+f(G)=n+2-\kappa(G)$. Hibi and Saeedi Madani classified these graphs into two collection, $\mathcal{G}_d$ and $\mathcal{F}_q$ (see \cite[Section 2]{Hibi-Madani}). It follows from \cite[Corollary 2.5]{Hibi-Madani} and Auslander-Buchsbaum formula that if $G$ is either in $\mathcal{G}_d$ or in $\mathcal{F}_q$, then $\pd(S/J_G)=n-2+\kappa(G)$. 

One of the main tools in the study of depths of binomial edge ideal is the so-called Ohtani short exact sequence, \eqref{ohtani-ses}. While studying graphs $G$ with $d(G)+f(G)+1=n+2-\kappa(G)$, we see that some of the graphs $G_v$ or $G\setminus v$ happens to be a graph in the Hibi-Madani class. Therefore, in order to study the depth, it becomes necessary to understand certain extremal Betti numbers of graphs in the Hibi-Madani class.
For a finitely generated graded $S$-module $M$, if $\beta_{i,i+j}(M)\neq 0$ and for all pairs $(k,l)\neq (i,j)$ with $k\geq i$ and $l\geq j$, $\beta_{k,k+l}(M)=0$, then $\beta_{i,i+j}(M)$ is called an \textit{extremal Betti number} of $M$. If $p =\pd(M)$, then there exists a unique number $i$ such that $\beta_{p,p+i}(M)$ is an extremal Betti number.

\begin{proposition}\label{extremal-hibi-madani}
Let $G$ be a graph $[n]$ such that $d(G)+f(G)=n+2-\kappa(G)$. Then $\beta_{n-2+\kappa(G),n-2+\kappa(G)+d(G)}(S/J_G)$ is an extremal Betti number of $S/J_G$. 
\end{proposition}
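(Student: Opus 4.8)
The plan is to exploit the fact, recorded just above, that $\pd(S/J_G) = n-2+\kappa(G) =: p$ for every graph in the Hibi--Madani class. Since $\Tor_i^S(S/J_G,\K) = 0$ for every $i > p$, every nonzero Betti number sits in homological degree at most $p$. Consequently, if we set $t := \max\{\, j : \beta_{p,p+j}(S/J_G)\neq 0\,\}$, then $\beta_{p,p+t}(S/J_G)$ is automatically an extremal Betti number: no nonzero $\beta_{i,i+j}(S/J_G)$ other than this one satisfies both $i\ge p$ and $j\ge t$, because $i>p$ forces vanishing and $i=p,\,j>t$ is excluded by the maximality of $t$. Thus the whole proposition reduces to the single numerical identity $t = d(G)$, which I would split into $t \ge d(G)$ and $t \le d(G)$.

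For the bound $t \le d(G)$ I would prove $\reg(S/J_G) = d(G)$, since $t \le \reg(S/J_G)$ always holds. The lower bound $\reg(S/J_G) \ge d(G)$ is immediate, because a shortest path realizing the diameter is an induced $P_{d(G)+1}$ and regularity of binomial edge ideals is bounded below by the length of a longest induced path, with $\reg(S/J_{P_{d(G)+1}}) = d(G)$. The reverse inequality $\reg(S/J_G) \le d(G)$ I would read off from the structure of the class: by the analysis behind \Cref{induced-cycle-charac} these graphs are chordal, or chordal up to a single induced $C_4$, and are assembled as clique sums of cliques, so one can bound the regularity block by block, either by invoking known regularity formulas for such clique-sum/chordal configurations or by an Ohtani-style induction on $\iv(G)$ using \eqref{ohtani-ses} together with the diameter comparison in \Cref{diameter-lemma}.

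The essential content is the reverse inequality $t \ge d(G)$, namely the non-vanishing $\beta_{p,p+d(G)}(S/J_G)\neq 0$ of the corner itself, and this is where I expect the real difficulty to lie, since $\reg(S/J_G)=d(G)$ does not by itself force the regularity to be attained in the last homological degree. I would argue by induction on the number of vertices (or on $\iv(G)$), choosing a well-placed internal vertex $v$---for instance the neighbour of a simplicial vertex lying on a diameter-realizing path---and feeding it into the long exact sequence \eqref{ohtani-tor}. The estimates $d(G)-1 \le d(G_v) \le d(G)$ from \Cref{diameter-lemma} together with $d(G)\le d(G\setminus v)$ and $f(G)\le f(G\setminus v)$ from \Cref{g-v} are exactly what is needed to track how $p$, $d$, $f$ and $\kappa$ migrate to $G\setminus v$, $G_v$ and $G_v\setminus v$, and to arrange that at homological degree $p$ and internal degree $p+d(G)$ exactly one summand survives, so that the connecting maps in \eqref{ohtani-tor} pin down $\Tor_p^S(S/J_G,\K)_{p+d(G)}\neq 0$. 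Because these three auxiliary graphs need not themselves lie in the Hibi--Madani class, the induction must be organised around the explicit collections $\mathcal{G}_d$ and $\mathcal{F}_q$, with the indecomposable building blocks handled directly; here the Betti product formula \eqref{Bettiproduct} is the natural tool for clique-sum decompositions along a free vertex, where the extremal Betti numbers of the factors multiply.

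Finally, I would record an alternative to the inductive step for the non-vanishing: by graded local duality over $S$ (with $\dim S = 2n$), the corner $\beta_{p,p+d(G)}(S/J_G)$ is governed by the lowest local cohomology module $H^{d(G)+f(G)}_{\mathfrak{m}}(S/J_G)$, and since $2n-p-d(G)=f(G)$ the claim becomes the non-triviality of a prescribed socle degree of that module; establishing this directly is a viable, though technically demanding, route. In either approach the main obstacle is the same: showing that the regularity $d(G)$ is realised precisely at the end of the resolution, i.e.\ that the corner Betti number $\beta_{p,p+d(G)}(S/J_G)$ does not vanish.
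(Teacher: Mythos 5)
Your opening reduction is correct: since graphs in the Hibi--Madani class satisfy $\depth(S/J_G)=d(G)+f(G)$, Auslander--Buchsbaum gives $\pd(S/J_G)=n-2+\kappa(G)=:p$, and the proposition is then equivalent to the pair of statements $\beta_{p,p+d(G)}(S/J_G)\neq 0$ and $\beta_{p,p+j}(S/J_G)=0$ for $j>d(G)$. From that point on, however, your text is a plan rather than a proof, and the gap sits exactly where you place ``the real difficulty''. The non-vanishing of the corner is the entire content of the proposition, and the paper obtains it by an induction whose substance consists of verifications your sketch never performs: for $\kappa(G)=1$ (so $G\in\mathcal{G}_{d}$), one must check that for an internal vertex $w$ both $G_w$ and $G_w\setminus w$ lie in $\mathcal{G}_{d-1}$ (via \Cref{diameter-lemma} and $f(G_w)\geq f(G)+1$), that every component of $G\setminus w$ is complete or lies in some $\mathcal{G}_i$ with $i\leq d-2$, and, crucially, that $\pd(S/((x_w,y_w)+J_{G\setminus w}))\leq n-1$ together with $\beta_{n-1,n-1+j}(S/((x_w,y_w)+J_{G\setminus w}))=0$ for $j\geq d+1$, which requires applying the induction to the components of $G\setminus w$ and combining them through \eqref{Bettiproduct} using $d(G_1)+d(G_2)\leq d(G)$. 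Only after all three ingredients are in place does \eqref{ohtani-tor} at $i=n-1$, $j=d$ force an injection of the nonzero extremal Tor of $S/((x_w,y_w)+J_{G_w\setminus w})$ into $\Tor_{n-1}^S(S/J_G,\K)_{n-1+d}$, and simultaneously yield the vanishing for $j\geq d+1$. The case $\kappa(G)\geq 2$ needs a separate induction on $\kappa$ (with $G_w$, $G_w\setminus w$ complete, $G\setminus w$ a block graph handled by \cite{her2} when $\kappa=2$, and $G\setminus w\in\mathcal{F}_{\kappa(G)-1}$ when $\kappa\geq 3$). Saying one should ``arrange that exactly one summand survives'' names this goal; it does not carry it out, and nothing in your proposal shows that it can be arranged.

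The regularity detour has its own problem: you replace the needed vanishing $\beta_{p,p+j}=0$ for $j>d(G)$ by the strictly stronger, unproven claim $\reg(S/J_G)=d(G)$. This is not available off the shelf: the known chordal bound $\reg(S/J_G)\leq c(G)$ (number of maximal cliques) is too weak --- for the star $K_{1,m}\in\mathcal{G}_2$ one has $c(G)=m>2=d(G)$ --- the block-graph results of \cite{her2} cover only part of $\mathcal{G}_d$ (clique sums along edges occur in this class), and regularity has no clean additivity under clique sums along edges; so proving $\reg(S/J_G)=d(G)$ would itself require essentially the same Ohtani induction you are trying to outsource. (A smaller slip: graphs with $d(G)+f(G)=n+2-\kappa(G)$ are chordal by \cite{Hibi-Madani}; \Cref{induced-cycle-charac}, which permits an induced $C_4$, describes the class $d(G)+f(G)+1=n+2-\kappa(G)$, not this one.) The paper never needs regularity at all: the vanishing beyond the corner falls out of the same induction that produces the non-vanishing.
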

\begin{proof}
First we consider the case when $\kappa(G)=1$. Then $G\in \mathcal{G}_{d(G)}$. We proceed by induction on $d(G)$. If $d(G) = 2$, then $G$ is a block graph and the assertion follows from \cite[Theorem 6]{her2}, noting that $\iv(G)=d(G)-1$. Assume that $d(G)\geq 3$. Let $w$ be an internal vertex. Then it follows from \Cref{diameter-lemma} that $d(G_w) \geq d(G)-1$. From \cite[Lemma 3.2]{Arv-Jaco}, we get $f(G_w) \geq f(G)+1$. Therefore, $n+1=d(G)+f(G) \leq d(G_w)+f(G_w)\leq n+1$. The same way we can see that $d(G_w\setminus w) + f(G_w\setminus w) = n$. Hence $G_w,G_w\setminus w\in \mathcal{G}_{d-1}$. Also, $G\setminus w$ is a disconnected graph where each component of $G\setminus w$ is either a complete graph or belongs to $\mathcal{G}_i$ for some $i\leq d-2$. Therefore, $\depth(S/J_{G_w})=n+1,\depth(S/((x_w,y_w)+J_{G_w\setminus w}))=n$ and $\depth(S/((x_w,y_w)+J_{G\setminus w}))\geq n+1$. Hence by Auslander-Buchsbaum formula, we have $\pd(S/J_{G_w})=n-1,\pd(S/((x_w,y_w)+J_{G_w\setminus w}))=n$ and $\pd(S/((x_w,y_w)+J_{G\setminus w}))\leq n-1$. Also, note that $d(G_w)=d(G_w\setminus w)=d(G)-1$.  Therefore, by induction, $\beta_{n-1,n-1+d(G_w)}(S/J_{G_w})$, $\beta_{n,n+d(G_w\setminus w)}(S/((x_w,y_w)+J_{G_w\setminus w}))$ are extremal Betti numbers. 
%Also by induction and using the equation \eqref{Bettiproduct}, $\beta_{n-1,n-1+j}(S/((x_w,y_w)+J_{G\setminus w}))=0$ for $j\geq d(G)+1$. 
If $\pd(S/((x_w,y_w)+J_{G\setminus w})) < n-1$, then $\beta_{n-1,j}(S/((x_w,y_w)+J_{G\setminus w})) = 0$ for all $j$. If $\pd(S/((x_w,y_w)+J_{G\setminus w}))= n-1$, then $G\setminus w$ has precisely two components, say $G_1$ and $G_2$. Set $|V(G_1)|=n_1$ and $|V(G_2)|=n_2$. Since $d(G_1),d(G_2) < d$, by applying induction on $G_1$ and $G_2$, we get that $\beta_{n_i-1,n_i-1+d(G_i)}(S_{G_i}/J_{G_i})$ is an extremal Betti number for $i=1,2$. Observe that $d(G_1)+d(G_2)\leq d(G)$. Hence by \eqref{Bettiproduct},  $\beta_{n-1,n-1+j}(S/((x_w,y_w)+J_{G\setminus w}))=0$ for $j\geq d(G)+1$. Therefore, it follows from \eqref{ohtani-tor} for $i=n-1$ and $j=d(G)$ that
%\begin{align}\label{ohtani-torn}
%0 & \longrightarrow \Tor_{n}^{S}\left(\frac{S}{(x_w,y_w)+J_{G_w\setminus w}},\K\right)_{n-1+d(G)} \longrightarrow \Tor_{n-1}^{S}\left( \frac{S}{J_G},\K\right)_{n-1+d(G)}\longrightarrow \nonumber \\ & \longrightarrow \Tor_{n-1}^{S}\left( \frac{S}{(x_w,y_w)+J_{G\setminus w}},\K\right)_{n-1+d(G)} 
%\oplus \Tor_{n-1}^{S}\left(\frac{S}{J_{G_w}},\K\right)_{n-1+d(G)%} \longrightarrow \cdots
%\end{align}
that $$\beta_{n-1,n-1+d(G)}(S/J_G)\neq 0 \text{ and } \beta_{n-1,n-1+j}(S/J_G)=0 \text{ for } j\geq d(G)+1. $$
Since $\pd(S/J_G)=n-1$, $\beta_{n-1,n-1+d(G)}(S/J_G)$ is an extremal Betti number of $S/J_G$.

Now we consider $\kappa(G)\geq 2$. Then $G\in \mathcal{F}_{\kappa(G)}$ with $d(G)=2$. Take $w$ to be an internal vertex in $G$. Then it can be noted that $G_w, G_w\setminus w$ are complete graphs on $n$ and $n-1$ vertices respectively. So, $\beta_{n-1,n}(S/J_{G_w})$ and $\beta_{n,n+1}(S/((x_w,y_w)+J_{G_w\setminus w}))$ are extremal Betti numbers. We proceed by induction on $\kappa(G)$. Assume that $\kappa(G)=2$. Then $G\setminus w$ is a block graph on $n-1$ vertices, and hence by \cite[Theorem 6]{her2}, $\beta_{n,n+2}(S/((x_w,y_w)+J_{G\setminus w}))$ is an extremal Betti number as $\iv(G\setminus w)=1$. Therefore it follows from the long exact sequence \eqref{ohtani-tor} that $\beta_{n,n+2}(S/J_G)$ is an extremal Betti number of $S/J_G$. Suppose now $\kappa(G)\geq 3$. In this case, note that $G\setminus w\in \mathcal{F}_{\kappa(G)-1}$. Therefore, by induction and \eqref{Bettiproduct}, $\beta_{n-1+\kappa(G\setminus w),n-1+\kappa(G\setminus w)+d(G\setminus w)}(S/((x_w,y_w)+J_{G\setminus w}))$ is an extremal Betti number. Since $\kappa(G\setminus w)=\kappa(G)-1$ and $d(G\setminus w)=d(G)$, the assertion follows from the long exact sequence \eqref{ohtani-tor}.
\end{proof}

For the rest of the section, $G$ denotes a connected non-complete graph on $[n]$ with $d(G)+f(G)+1= n+2-\kappa(G)$. We now describe the structure of these graphs in more details and then compute the depth of $S/J_G$.
\vskip 2mm
%\textbf{Case-I:} 
%\subsection{} \label{kappa=1}
Assume that $\kappa(G)=1$. Then $d(G)+f(G)=n$. Let $u, v \in V(G)$ such that $\dist_G(u,v) = d(G) =d$. Let $P: u, u_1, \ldots, u_{d-1}, v$ be an induced path from $u$ to $v$. Since $\iv(G) = n-f(G) = d(G)$ and $u_1, \ldots, u_{d-1}$ are internal vertices, there  is precisely one internal vertex among $[n]\setminus \{u_1, \ldots, u_{d-1}\}$. This can occur in the following ways:
\begin{enumerate}
    \item either $u$ or $v$ is an internal vertex;
    \item there is an internal vertex $v' \in [n] \setminus V(P)$.
\end{enumerate}
If either $u$ or $v$ is an internal vertex, then it follows from the proof of \Cref{induced-cycle-charac} that $G$ is a chordal graph having all its internal vertices contained in $V(P)$. If both $u$ and $v$ are simplicial vertices, then there is a unique internal vertex in $[n]\setminus V(P)$, say $v'$. Since $v'$ is the unique internal vertex in $[n] \setminus V(P)$, $\dist(v', P) = 1$. Let $j = \min\{\ell : \{u_\ell, v'\} \in E(G)\}$. It may also be noted that $\max\{k : u_k \in N_P(v')\} \leq j+2$, since otherwise it will contradict the assumption that $\dist_G(u,v) = d$. Hence $N_P(v') \subseteq \{u_j, u_{j+1},u_{j+2}\}$. 
Consider the following graph $H$:

%\captionsetup[figure]{labelformat=empty}
			
\begin{figure}[H]
\centering
\begin{tikzpicture}[scale=1]
\draw[dotted] (-6, 0) -- (-5,0);
\draw[dotted] (-3, 0) -- (-2,0);
\draw (-7,0)-- (-6,0);
\draw  (-5,0)-- (-4,0);
\draw  (-4,0)-- (-3,0);
\draw  (-4,1)-- (-5,0);
\draw  (-4,1)-- (-4,0);
\draw  (-4,1)-- (-3,0);
\draw  (-2,0)-- (-1,0);
\begin{scriptsize}
\fill  (-7,0) circle (1.5pt);
\draw (-6.84,-0.3) node {$u$};
\fill  (-6,0) circle (1.5pt);
\draw (-5.84,-0.3) node {$u_1$};
\fill  (-5,0) circle (1.5pt);
\draw (-4.84,-0.3) node {$u_j$};
\fill  (-4,0) circle (1.5pt);
\draw (-3.84,-0.3) node {$u_{j+1}$};
\fill  (-3,0) circle (1.5pt);
\draw (-2.84,-0.3) node {$u_{j+2}$};
\fill (-2,0) circle (1.5pt);
\draw (-1.84,-0.3) node {$u_{d-1}$};
\fill  (-1,0) circle (1.5pt);
\draw (-0.84,-0.3) node {$v$};
\fill  (-4,1) circle (1.5pt);
\draw (-4,1.3) node {$v'$};
\end{scriptsize}
				\end{tikzpicture}
				\caption{$H$}\label{figure-H}
			\end{figure}

Note that if either $u$ or $v$ is an internal vertex of $G$, then $H$ cannot be an induced subgraph of $G$.

\captionsetup[figure]{labelformat=empty}
			\begin{figure}[H]
				\begin{tikzpicture}[scale=1]
				\draw (-0.65,1)-- (-1.3,0);
\draw (0,0)-- (-0.65,1);
\draw (-1.3,0)-- (-0.65,-0.8);
\draw (-0.65,-0.8)-- (0,0);
\draw (-1.3,0)-- (0,0);
\draw (0,0)-- (1.3,0);
%\draw (1.3,0)-- (2.6,0);
\draw (2.6,0)-- (3.9,0);
\draw (1.3,0)-- (0.65,-0.8);
\draw (1.3,0)-- (1.95,-0.8);
\draw (2.6,0)-- (3.28,-0.64);
\draw (3.9,0)-- (3.28,-0.64);
\draw (2.6,1)-- (2.6,0);
\draw (2.6,1)-- (3.25,1);
\draw (3.25,1)-- (2.6,0);
\draw (2.6,0)-- (1.95,1);
\draw (0,1)-- (1.3,1);
\draw (1.3,1)-- (1.3,0);
\draw (0,0)-- (0,1);
\draw (0,0)-- (1.3,1);
\draw (0,1)-- (1.3,0);
\draw[dotted] (1.3, 0) -- (2.6,0);
\begin{scriptsize}
\fill  (0,0) circle (1.5pt);
\draw (0.16,-0.2) node {$u_1$};
\fill (-1.3,0) circle (1.5pt);
\draw (-1.4,-0.2) node {$u$};
\fill (1.3,0) circle (1.5pt);
\draw (1.7,-0.2) node {$u_2$};
\fill  (2.6,0) circle (1.5pt);
\draw(2.4,-0.2) node {$u_{d-1}$};
\fill  (3.9,0) circle (1.5pt);
\draw (4.06,0.26) node {$v$};
\fill  (-0.65,1) circle (1.5pt);
%\draw (-0.52,1.26) node {$F$};
\fill  (-0.65,-0.8) circle (1.5pt);
%\draw (-0.5,-0.54) node {$G$};
\fill  (0,1) circle (1.5pt);
%\draw (0.16,1.26) node {$H$};
\fill  (1.3,1) circle (1.5pt);
%\draw (1.4,1.26) node {$I$};
\fill  (0.65,-0.8) circle (1.5pt);
%\draw (0.74,-0.54) node {$J$};
\fill  (1.95,-0.8) circle (1.5pt);
%\draw (2.08,-0.54) node {$K$};
\fill (1.95,1) circle (1.5pt);
%\draw (2.08,1.26) node {$L$};
\fill  (2.6,1) circle (1.5pt);
%\draw (2.78,1.26) node {$M$};
\fill  (3.25,1) circle (1.5pt);
%\draw (3.4,1.26) node {$N$};
\fill  (3.28,-0.64) circle (1.5pt);
%\draw (3.44,-0.38) node {$O$};
\end{scriptsize}
				\end{tikzpicture}
				\caption{$u$ is an internal vertex}
			\end{figure}

If both $u$ and $v$ are simplicial vertices, then $H \setminus \left\{ \{v', u_{j+1}\}, \{v', u_{j+2}\}\right\}$ is necessarily an induced subgraph of $G$. It follows from \cite[Theorem 2.7]{Rinaldo-Rauf} and \cite[Proposition 3.4 ]{Rajib} that $\depth (S_H/J_{H}) = |V(H)|.$
Let \[
\mathcal{D} := \big\{ G ~: G \text{ is chordal}, \kappa(G) = 1 \text{ and } d(G)+f(G) = n \big\}
\]
\[
\mathcal{D}_1 := \left\{ G \in \mathcal{D}:
\begin{array}{l}
            H \text{ is an induced subgraph of } G \text{ and } 
      \{v'\}, \{u_{j+1}\} \notin \mathcal{C}(G)
\end{array}
\right\}.\]

For the benefit of the readers, we illustrate the graphs using some figures.

\noindent

\begin{minipage}{\linewidth}
		\begin{minipage}{.5\linewidth}
		\captionsetup[figure]{labelformat=empty}
		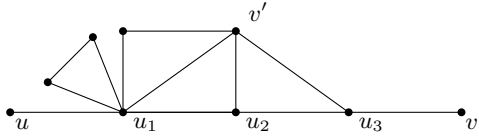
\begin{figure}[H]
				\begin{tikzpicture}[scale=1]
				\draw (-3.5,3.16)-- (-2,3.16);
\draw (-2,3.16)-- (-0.5,3.16);
\draw (-0.5,3.16)-- (1,3.16);
\draw (1,3.16)-- (2.5,3.16);
\draw (-2,3.16)-- (-2,4.24);
\draw (-2,3.16)-- (-0.5,3.16);
\draw (-2,4.24)-- (-0.5,4.24);
\draw (-0.5,4.24)-- (-0.5,3.16);
%\draw (-2,4.24)-- (-0.5,3.16);
\draw (-2,3.16)-- (-0.5,4.24);
\draw (-0.5,4.24)-- (1,3.16);
\draw (-2.4,4.16)--(-2,3.16);
\draw (-3,3.56)--(-2,3.16);
\draw (-2.4,4.16)--(-3,3.56);
\begin{scriptsize}
\fill  (-2.4,4.16) circle (1.5pt);
\fill  (-3,3.56) circle (1.5pt);
\fill  (-3.5,3.16) circle (1.5pt);
\draw (-3.34,3) node {$u$};
\fill  (-2,3.16) circle (1.5pt);
\draw (-1.7,3) node {$u_1$};
\fill  (-0.5,3.16) circle (1.5pt);
\draw (-0.2,3) node {$u_2$};
\fill  (1,3.16) circle (1.5pt);
\draw (1.3,3) node {$u_3$};
\fill  (2.5,3.16) circle (1.5pt);
\draw (2.64,3) node {$v$};
\fill  (-2,4.24) circle (1.5pt);
\fill  (-0.5,4.24) circle (1.5pt);
\draw (-0.2,4.5) node {$v'$};
%\draw (-0.34,4.5) node {$A$};
\end{scriptsize}
				\end{tikzpicture}
				\caption{$G_1\in \mathcal{D}_1$}
			\end{figure}
		\end{minipage}
		\begin{minipage}{.5\linewidth}
    \captionsetup[figure]{labelformat=empty}
			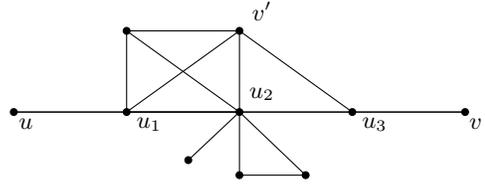
\begin{figure}[H]
				\begin{tikzpicture}[scale=1]
				
				\draw (-3.5,3.16)-- (-2,3.16);
\draw (-2,3.16)-- (-0.5,3.16);
\draw (-0.5,3.16)-- (1,3.16);
\draw (1,3.16)-- (2.5,3.16);
\draw (-2,3.16)-- (-2,4.24);
\draw (-3.5,3.16)-- (2.5,3.16);
\draw (-2,3.16)-- (-0.5,3.16);
\draw (-2,4.24)-- (-0.5,4.24);
\draw (-0.5,4.24)-- (-0.5,3.16);
\draw (-2,4.24)-- (-0.5,3.16);
\draw (-2,3.16)-- (-0.5,4.24);
\draw (-0.5,4.24)-- (1,3.16);
\draw (-0.5,3.16)-- (-0.5,2.32);
\draw (-0.5,2.32)-- (0.38,2.32);
\draw (-0.5,3.16)-- (0.38,2.32);
\draw (-0.5,3.16)-- (-1.18,2.52);
\begin{scriptsize}
\fill  (-3.5,3.16) circle (1.5pt);
\draw (-3.34,3) node {$u$};
\fill  (-2,3.16) circle (1.5pt);
\draw (-1.7,3) node {$u_1$};
\fill  (-0.5,3.16) circle (1.5pt);
\draw (-0.2,3.4) node {$u_2$};
\fill  (1,3.16) circle (1.5pt);
\draw (1.3,3) node {$u_3$};
\fill  (2.5,3.16) circle (1.5pt);
\draw (2.64,3) node {$v$};
\fill  (-2,4.24) circle (1.5pt);
%\draw (-1.9,4.5) node {$J$};
\fill  (-0.5,4.24) circle (1.5pt);
%\draw (-0.34,4.5) node {$A$};
\fill  (-0.5,2.32) circle (1.5pt);
%\draw (-0.34,2.58) node {$B$};
\fill  (-1.18,2.52) circle (1.5pt);
\draw (-0.2,4.5) node {$v'$};
%\draw (-1.02,2.78) node {$C$};
\fill  (0.38,2.32) circle (1.5pt);
%\draw (0.54,2.58) node {$D$};
\end{scriptsize}
				\end{tikzpicture}
				\caption{$G_2\in \mathcal{D}\setminus \mathcal{D}_1$ with $H$ as an induced subgraph and $\{u_2\}\in \mathcal{C}(G)$}
			\end{figure}
			
			\end{minipage}
	\end{minipage}

\begin{minipage}{\linewidth}
		\begin{minipage}{.5\linewidth}
		\captionsetup[figure]{labelformat=empty}
		\begin{figure}[H]
				\begin{tikzpicture}[scale=1]
				\draw (-3.5,3.16)-- (-2,3.16);
\draw (-2,3.16)-- (-0.5,3.16);
\draw (-0.5,3.16)-- (1,3.16);
\draw (1,3.16)-- (2.5,3.16);
\draw (-2,3.16)-- (-2,2.14);
\draw (-2,2.14)-- (-0.5,2.14);
\draw (-0.5,3.16)-- (-0.5,2.14);
\draw (-3.5,3.16)-- (-2.72,4.24);
\draw (-2.72,4.24)-- (-2,3.16);
\draw (-2,3.16)-- (-2,4.24);
\draw (-2,3.16)-- (-1.2,4.24);
\draw (-1.2,4.24)-- (-0.5,3.16);
\draw (-0.5,3.16)-- (0.3,4.24);
\draw (0.3,4.24)-- (1.18,4.24);
\draw (0.3,4.24)-- (1.1,5.08);
\draw (1.1,5.08)-- (0.32,5.08);
\draw (0.32,5.08)-- (0.3,4.24);
\draw (0.3,4.24)-- (-0.44,5.08);
\draw (-2,3.16)-- (-0.5,2.14);
\draw (-2,2.14)-- (-0.5,3.16);
\draw (-3.5,3.16)-- (2.5,3.16);
\draw (-2,3.16)-- (-0.5,3.16);
\begin{scriptsize}
\fill (-3.5,3.16) circle (1.5pt);
\draw (-3.34,3) node {$u$};
\fill  (-2,3.16) circle (1.5pt);
\draw (-2.2,3) node {$u_1$};
\fill  (-0.5,3.16) circle (1.5pt);
\draw (-0.2,3) node {$u_2$};
\fill  (1,3.16) circle (1.5pt);
\draw (1.3,3) node {$u_3$};
\fill  (2.5,3.16) circle (1.5pt);
\draw (2.64,3) node {$v$};
\fill  (-2.72,4.24) circle (1.5pt);
%\draw (-2.58,4.5) node {$F$};
\fill  (-2,2.14) circle (1.5pt);
%\draw (-1.84,2.4) node {$G$};
\fill  (-0.5,2.14) circle (1.5pt);
%\draw (-0.34,2.4) node {$H$};
\fill  (-1.2,4.24) circle (1.5pt);
%\draw (-1.1,4.5) node {$I$};
\fill  (-2,4.24) circle (1.5pt);
%\draw (-1.9,4.5) node {$J$};
\fill  (0.3,4.24) circle (1.5pt);
\draw (0.3,4) node {$v'$};
\fill  (-0.44,5.08) circle (1.5pt);
%\draw (-0.3,5.34) node {$L$};
\fill  (0.32,5.08) circle (1.5pt);
%\draw (0.5,5.34) node {$M$};
\fill  (1.1,5.08) circle (1.5pt);
%\draw (1.26,5.34) node {$N$};
\fill  (1.18,4.24) circle (1.5pt);
%\draw (1.34,4.5) node {$O$};
\end{scriptsize}
				\end{tikzpicture}
				\caption{$G_3\in \mathcal{D}\setminus \mathcal{D}_1$ with $H$ is not an induced subgraph}
			\end{figure}
		\end{minipage}
		\begin{minipage}{.5\linewidth}
    \captionsetup[figure]{labelformat=empty}
			
			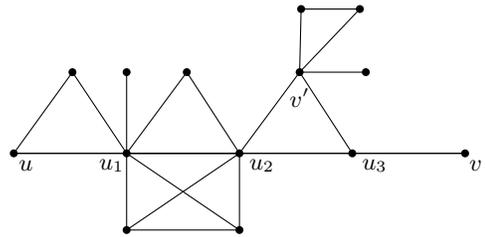
\begin{figure}[H]
				\begin{tikzpicture}[scale=1]
				\draw (-3.5,3.16)-- (-2,3.16);
\draw (-2,3.16)-- (-0.5,3.16);
\draw (-0.5,3.16)-- (1,3.16);
\draw (1,3.16)-- (2.5,3.16);
\draw (-2,3.16)-- (-2,2.14);
\draw (-2,2.14)-- (-0.5,2.14);
\draw (-0.5,3.16)-- (-0.5,2.14);
\draw (-3.5,3.16)-- (-2.72,4.24);
\draw (-2.72,4.24)-- (-2,3.16);
\draw (-2,3.16)-- (-2,4.24);
\draw (-2,3.16)-- (-1.2,4.24);
\draw (-1.2,4.24)-- (-0.5,3.16);
\draw (-0.5,3.16)-- (0.3,4.24);
\draw (0.3,4.24)-- (1.18,4.24);
\draw (0.3,4.24)-- (1.1,5.08);
\draw (1.1,5.08)-- (0.32,5.08);
\draw (0.32,5.08)-- (0.3,4.24);
%\draw (0.3,4.24)-- (-0.44,5.08);
\draw (-2,3.16)-- (-0.5,2.14);
\draw (-2,2.14)-- (-0.5,3.16);
\draw (-3.5,3.16)-- (2.5,3.16);
\draw (-2,3.16)-- (-0.5,3.16);
\draw (1,3.16)-- (0.3,4.24);
\begin{scriptsize}
\fill (-3.5,3.16) circle (1.5pt);
\draw (-3.34,3) node {$u$};
\fill  (-2,3.16) circle (1.5pt);
\draw (-2.2,3) node {$u_1$};
\fill  (-0.5,3.16) circle (1.5pt);
\draw (-0.2,3) node {$u_2$};
\fill  (1,3.16) circle (1.5pt);
\draw (1.3,3) node {$u_3$};
\fill  (2.5,3.16) circle (1.5pt);
\draw (2.64,3) node {$v$};
\fill  (-2.72,4.24) circle (1.5pt);
%\draw (-2.58,4.5) node {$F$};
\fill  (-2,2.14) circle (1.5pt);
%\draw (-1.84,2.4) node {$G$};
\fill  (-0.5,2.14) circle (1.5pt);
%\draw (-0.34,2.4) node {$H$};
\fill  (-1.2,4.24) circle (1.5pt);
%\draw (-1.1,4.5) node {$I$};
\fill  (-2,4.24) circle (1.5pt);
%\draw (-1.9,4.5) node {$J$};
\fill  (0.3,4.24) circle (1.5pt);
\draw (0.3,3.9) node {$v'$};
%\fill  (-0.44,5.08) circle (1.5pt);
%\draw (-0.3,5.34) node {$L$};
\fill  (0.32,5.08) circle (1.5pt);
%\draw (0.5,5.34) node {$M$};
\fill  (1.1,5.08) circle (1.5pt);
%\draw (1.26,5.34) node {$N$};
\fill  (1.18,4.24) circle (1.5pt);
%\draw (1.34,4.5) node {$O$};
\end{scriptsize}
				\end{tikzpicture}
				\caption{$G_4\in \mathcal{D}\setminus \mathcal{D}_1$ with $H$ is not an induced subgraph}
			\end{figure}
			\end{minipage}
	\end{minipage}

\vskip 2mm
We now proceed to compute the depth. Let $G$ be a connected non-complete graph on $[n]$ such that $d(G)+f(G)+1=n+2-\kappa(G)$. It follows from \cite[Theorem 3.5]{MKM-depth} and \cite[Theorems 3.19 and 3.20]{BN17} that $d(G)+f(G)\leq \depth(S/J_G)\leq n+2-\kappa(G)=d(G)+f(G)+1.$

First we study the graphs $G$ with $\kappa(G)=1$. The main tool in our proofs is the use of Ohtani lemma (\Cref{ohtani-lemma}). To help the reader imagine $G_w$, for an internal vertex $w$, we have given pictorial illustration of one such instance in each of the proofs. For example, given below are the graphs $(G_4)_{v'}$ and $G_4 \setminus v'$, where $G_4$ is the graph given above.

\begin{minipage}{\linewidth}
		\begin{minipage}{.5\linewidth}
		\captionsetup[figure]{labelformat=empty}
		\begin{figure}[H]
				\begin{tikzpicture}[scale=1]
				\draw (-3.5,3.16)-- (-2,3.16);
\draw (-2,3.16)-- (-0.5,3.16);
\draw (-0.5,3.16)-- (1,3.16);
\draw (1,3.16)-- (2.5,3.16);
\draw (-2,3.16)-- (-2,2.14);
\draw (-2,2.14)-- (-0.5,2.14);
\draw (-0.5,3.16)-- (-0.5,2.14);
\draw (-3.5,3.16)-- (-2.72,4.24);
\draw (-2.72,4.24)-- (-2,3.16);
\draw (-2,3.16)-- (-2,4.24);
\draw (-2,3.16)-- (-1.2,4.24);
\draw (-1.2,4.24)-- (-0.5,3.16);
\draw (-0.5,3.16)-- (0.3,4.24);
\draw (0.3,4.24)-- (1.18,4.24);
\draw (0.3,4.24)-- (1.1,5.08);
\draw (1.1,5.08)-- (0.32,5.08);
\draw (0.32,5.08)-- (0.3,4.24);
%\draw (0.3,4.24)-- (-0.44,5.08);
\draw (-2,3.16)-- (-0.5,2.14);
\draw (-2,2.14)-- (-0.5,3.16);
\draw (-3.5,3.16)-- (2.5,3.16);
\draw (-2,3.16)-- (-0.5,3.16);
\draw (1,3.16)-- (0.3,4.24);
\draw [color=red] (0.32,5.08) -- (1.18,4.24);
\draw [color=red](0.32,5.08) -- (-0.5,3.16);
\draw [color=red](0.32,5.08) -- (1,3.16);
\draw [color=red](1.18,4.24) -- (1.1,5.08);
\draw [color=red](1.18,4.24) -- (-0.5,3.16);
\draw [color=red](1.18,4.24) -- (1,3.16);
\draw [color=red] (1.1,5.08)-- (-0.5,3.16);
\draw [color=red] (1.1,5.08)-- (1,3.16);
\begin{scriptsize}
\fill (-3.5,3.16) circle (1.5pt);
\draw (-3.34,3) node {$u$};
\fill  (-2,3.16) circle (1.5pt);
\draw (-2.2,3) node {$u_1$};
\fill  (-0.5,3.16) circle (1.5pt);
\draw (-0.2,3) node {$u_2$};
\fill  (1,3.16) circle (1.5pt);
\draw (1.3,3) node {$u_3$};
\fill  (2.5,3.16) circle (1.5pt);
\draw (2.64,3) node {$v$};
\fill  (-2.72,4.24) circle (1.5pt);
%\draw (-2.58,4.5) node {$F$};
\fill  (-2,2.14) circle (1.5pt);
%\draw (-1.84,2.4) node {$G$};
\fill  (-0.5,2.14) circle (1.5pt);
%\draw (-0.34,2.4) node {$H$};
\fill  (-1.2,4.24) circle (1.5pt);
%\draw (-1.1,4.5) node {$I$};
\fill  (-2,4.24) circle (1.5pt);
%\draw (-1.9,4.5) node {$J$};
\fill  (0.3,4.24) circle (1.5pt);
\draw (0.3,3.9) node {$v'$};
%\fill  (-0.44,5.08) circle (1.5pt);
%\draw (-0.3,5.34) node {$L$};
\fill  (0.32,5.08) circle (1.5pt);
%\draw (0.5,5.34) node {$M$};
\fill  (1.1,5.08) circle (1.5pt);
%\draw (1.26,5.34) node {$N$};
\fill  (1.18,4.24) circle (1.5pt);
%\draw (1.34,4.5) node {$O$};
\end{scriptsize}
				\end{tikzpicture}
				\caption{$(G_4)_{v'}$}
			\end{figure}
		\end{minipage}
		\begin{minipage}{.5\linewidth}
    \captionsetup[figure]{labelformat=empty}
			
			\begin{figure}[H]
				\begin{tikzpicture}[scale=1]
				\draw (-3.5,3.16)-- (-2,3.16);
\draw (-2,3.16)-- (-0.5,3.16);
\draw (-0.5,3.16)-- (1,3.16);
\draw (1,3.16)-- (2.5,3.16);
\draw (-2,3.16)-- (-2,2.14);
\draw (-2,2.14)-- (-0.5,2.14);
\draw (-0.5,3.16)-- (-0.5,2.14);
\draw (-3.5,3.16)-- (-2.72,4.24);
\draw (-2.72,4.24)-- (-2,3.16);
\draw (-2,3.16)-- (-2,4.24);
\draw (-2,3.16)-- (-1.2,4.24);
\draw (-1.2,4.24)-- (-0.5,3.16);
%\draw (-0.5,3.16)-- (0.3,4.24);
%\draw (0.3,4.24)-- (1.18,4.24);
%\draw (0.3,4.24)-- (1.1,5.08);
\draw (1.1,5.08)-- (0.32,5.08);
%\draw (0.32,5.08)-- (0.3,4.24);
%\draw (0.3,4.24)-- (-0.44,5.08);
\draw (-2,3.16)-- (-0.5,2.14);
\draw (-2,2.14)-- (-0.5,3.16);
\draw (-3.5,3.16)-- (2.5,3.16);
\draw (-2,3.16)-- (-0.5,3.16);
%\draw (1,3.16)-- (0.3,4.24);
\begin{scriptsize}
\fill (-3.5,3.16) circle (1.5pt);
\draw (-3.34,3) node {$u$};
\fill  (-2,3.16) circle (1.5pt);
\draw (-2.2,3) node {$u_1$};
\fill  (-0.5,3.16) circle (1.5pt);
\draw (-0.2,3) node {$u_2$};
\fill  (1,3.16) circle (1.5pt);
\draw (1.3,3) node {$u_3$};
\fill  (2.5,3.16) circle (1.5pt);
\draw (2.64,3) node {$v$};
\fill  (-2.72,4.24) circle (1.5pt);
%\draw (-2.58,4.5) node {$F$};
\fill  (-2,2.14) circle (1.5pt);
%\draw (-1.84,2.4) node {$G$};
\fill  (-0.5,2.14) circle (1.5pt);
%\draw (-0.34,2.4) node {$H$};
\fill  (-1.2,4.24) circle (1.5pt);
%\draw (-1.1,4.5) node {$I$};
\fill  (-2,4.24) circle (1.5pt);
%\draw (-1.9,4.5) node {$J$};
%\fill  (0.3,4.24) circle (1.5pt);
%\draw (0.3,3.9) node {$v'$};
\fill  (-0.44,5.08) circle (1.5pt);
%\draw (-0.3,5.34) node {$L$};
\fill  (0.32,5.08) circle (1.5pt);
%\draw (0.5,5.34) node {$M$};
\fill  (1.1,5.08) circle (1.5pt);
%\draw (1.26,5.34) node {$N$};
\fill  (1.18,4.24) circle (1.5pt);
%\draw (1.34,4.5) node {$O$};
\end{scriptsize}
				\end{tikzpicture}
				\caption{$G_4\setminus v'$}
			\end{figure}
			\end{minipage}
	\end{minipage}

\begin{theorem}\label{claim1}
Let $G$ be a chordal graph with $\kappa(G) = 1$ and $d(G)+f(G) = n$. If $G \in \mathcal{D} \setminus \mathcal{D}_1$, then $\depth(S/J_G)=n+1.$
\end{theorem}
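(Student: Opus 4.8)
Since $\kappa(G)=1$ and $d(G)+f(G)=n$, the inequalities $d(G)+f(G)\le \depth(S/J_G)\le n+2-\kappa(G)$ collapse to $n\le \depth(S/J_G)\le n+1$, so it is enough to prove $\depth(S/J_G)\ge n+1$. The plan is to fix an internal vertex $w$ that is also a cut vertex of $G$ (such a vertex exists because $\kappa(G)=1$, and every cut vertex is automatically internal), apply \Cref{ohtani-lemma} and the short exact sequence \eqref{ohtani-ses} at $w$, and then invoke the depth lemma on \eqref{ohtani-ses}, which yields
$$\depth(S/J_G)\ \ge\ \min\Big\{\depth\big(S/J_{G_w}\big),\ \depth\big(S/((x_w,y_w)+J_{G\setminus w})\big),\ \depth\big(S/((x_w,y_w)+J_{G_w\setminus w})\big)+1\Big\}.$$
Everything then reduces to showing that the first two modules have depth at least $n+1$ and the third has depth at least $n$; I would organize this as an induction on $n$.

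In the principal cases I would choose $w$ so that forming the clique $G_w$ creates no shortcut along a diametral path: namely $w=u$ when an endpoint of the path is internal, and $w=v'$ when both endpoints are simplicial and $H$ is not an induced subgraph (so, by the discussion preceding \Cref{claim1}, $N_P(v')$ is a single vertex or two consecutive path vertices). For such a $w$: (i) in $G_w$ the vertex $w$ becomes simplicial, so $f$ increases by one and $\iv$ decreases by one, while \Cref{diameter-lemma} gives $d(G_w)\ge d(G)-1$ and, since completing $N_G(w)$ adds no chord shortening a diametral path, in fact $d(G_w)=d(G)$; hence $d(G_w)+f(G_w)=d+f+1=n+1=|V(G_w)|+1$, which (by the general bound $d+f\le |V|+2-\kappa$) forces $\kappa(G_w)=1$ and places $G_w$ in the Hibi--Madani class, so $\depth(S/J_{G_w})=n+1$ by \cite[Corollary 2.5]{Hibi-Madani}. (ii) As $w$ is a cut vertex, $G\setminus w$ is disconnected and each component is complete, in the Hibi--Madani class, or a strictly smaller graph of the present type; additivity of depth over components together with \eqref{Bettiproduct} then gives $\depth\big(S/((x_w,y_w)+J_{G\setminus w})\big)\ge n+1$, provided each component attains its maximal depth $|C_i|+1$. (iii) The graph $G_w\setminus w$ is $G\setminus w$ with $N_G(w)$ completed, which reconnects the components into a single Cohen--Macaulay (Hibi--Madani, $\kappa=1$) graph on $n-1$ vertices of depth $n$. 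Substituting (i)--(iii) into the displayed minimum gives $\depth(S/J_G)\ge n+1$, hence equality.

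The step I expect to be the real obstacle is the remainder of $\mathcal{D}\setminus \mathcal{D}_1$, where completing $N_G(w)$ does create a shortcut, so that $G_w$ drops into $\mathcal{D}$ rather than into the Hibi--Madani class; this is the situation when $v'$ or $u_{j+1}$ is a cut vertex with $H$ induced, and, most delicately, when $H$ is not induced but $v'$ is \emph{not} a cut vertex (which genuinely occurs, for instance when a free neighbour of $v'$ is also joined to $u_{j+1}$, so that $G\setminus v'$ stays connected and estimate (ii) fails for $w=v'$). In these cases I would instead take $w$ to be a suitable cut vertex on the diametral path, for which $G\setminus w$ separates off a short complete-or-path block and leaves a strictly smaller graph again satisfying $d+f=|V|$ with $\kappa=1$, and then close the argument by induction on $n$. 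The crux of the bookkeeping is to verify that none of the smaller graphs arising as $G_w$, or as a component of $G\setminus w$, falls into the minimal-depth family $\mathcal{D}_1$ (which would pull the relevant depth down to its lower bound and break the estimate). Tracking the position of the induced copy of $H$ and of $v'$ under these deletions and clique completions, and controlling the top homology via the extremal Betti numbers supplied by \Cref{extremal-hibi-madani}, is where the main work lies.
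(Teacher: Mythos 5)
Your overall architecture is the same as the paper's (apply Ohtani's lemma at an internal vertex $w$, run the depth lemma on \eqref{ohtani-ses}, and show the three corner terms are large by placing $G_w$, $G_w\setminus w$ and the components of $G\setminus w$ in the Hibi--Madani class), but your choice of $w$ in the principal case contains a fatal error. You take $w=u$ when the endpoint $u$ of the diametral path is internal, and you assert that this $w$ is a cut vertex. It never is: if $G\setminus u$ were disconnected, any vertex $x$ in a component avoiding $v$ would satisfy $\dist_G(x,v)\geq \dist_G(u,v)+1=d+1$, contradicting $d(G)=d$. (Being internal and being a cut vertex are different conditions; only the implication ``cut vertex $\Rightarrow$ internal'' holds.) Consequently $G\setminus u$ is a \emph{connected} graph on $n-1$ vertices, so $\depth\big(S/((x_u,y_u)+J_{G\setminus u})\big)\leq (n-1)+2-\kappa(G\setminus u)\leq n$ (and $=n$ if $G\setminus u$ is complete). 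Your bound (ii) is therefore false, and the minimum in your depth-lemma estimate can be at most $n$, which only reproduces the trivial lower bound $\depth(S/J_G)\geq n$. The paper's fix is precisely to move one step inward: since $u$ cannot be a cut vertex and all internal vertices lie on $P$, every maximal clique containing $u$ also contains $u_1$, i.e.\ $N_G[u]\subseteq N_G[u_1]$; taking $w=u_1$ then makes \emph{both} $u$ and $u_1$ simplicial in $G_w$ (so $f(G_w)\geq f(G)+2$ compensates the possible drop $d(G_w)\geq d(G)-1$ from \Cref{diameter-lemma}, landing $G_w$ in the Hibi--Madani class), and $u_1$ \emph{is} a cut vertex, so $G\setminus w$ is disconnected with Hibi--Madani or complete components, giving $\depth(S/((x_w,y_w)+J_{G\setminus w}))=(n-1)+c\geq n+1$.

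A second, smaller issue: the cases you defer at the end (when $H$ is not induced but $v'$ is not a cut vertex, and the case where $H$ is induced) are not a peripheral ``obstacle'' but the bulk of the actual proof, and ``take a suitable cut vertex on the diametral path and induct on $n$'' does not yet settle them. In the paper's treatment (its cases 2a(ii), 2a(iii) and 2b), the correct choices are $w=u_j$ or $w=u_{j+1}$, and the resulting $G_w$ is sometimes \emph{not} in the Hibi--Madani class but is again a graph of type 2a(iii) in $\mathcal{D}\setminus\mathcal{D}_1$; the argument closes by re-applying the already-established cases of the theorem to $G_w$ (not by induction on $n$), and one must check in each instance that neither $G_w$ nor any component of $G\setminus w$ falls into $\mathcal{D}_1$. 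So as written the proposal has a provably false step in its main case and an undeveloped sketch exactly where the substantive work lies.
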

\begin{proof} 
Note that $G \in \mathcal{D} \setminus \mathcal{D}_1$ if and only if one of the following conditions hold:
\begin{enumerate}
    \item either $u$ or  $v$ is an internal vertex;
    \item if both $u$ and $v$ are simplicial vertices, then $H$ is not  an induced subgraph of $G$, i.e., $\{v', u_{j+1}\}$ or $\{v', u_{j+2}\}$ is not an edge of $G$;
    \item if both $u$ and $v$ are simplicial vertices and $H$ is an induced subgraph of $G$, then $v'$ or $u_{j+1}$ is a cut-vertex of $G$, i.e., there are cliques attached along $v'$ or $u_{j+1}$.
\end{enumerate}

First we identify an appropriate internal vertex $w$ so that Ohtani's lemma (\Cref{ohtani-lemma}) can be applied to a short exact sequence of the type \eqref{ohtani-ses}. We show that, for a suitable choice of $w$, $\depth(S/J_{G_w}) \geq n+1$, $\depth(S/((x_w,y_w)+J_{G_w\setminus w})) \geq n$ and $\depth(S/((x_w,y_w)+J_{G\setminus w}))\geq n+1$.
We choose $w$ as follows:
\begin{enumerate}
    \item If $u$ is an internal vertex in $G$, then set $w=u_1$ (if $v$ is an internal vertex, then set $w =u_{d-1}$).
    \item Suppose both $u,v$ are simplicial vertices. 
    \begin{enumerate}
    \item Assume that $H$ is not an induced subgraph of $G$, i.e.,  $N_P(v')\subseteq \{u_j,u_{j+1}\}$.
    \begin{enumerate}
        \item If $N_P(v')\subseteq \{u_j,u_{j+1}\}$ and there are cliques attached to $v'$, then set $w=v'$.
    \item If $N_P(v')=\{u_j\}$ and there are no cliques attached to $v'$, then set $w = u_{j}$.
    \item If $N_P(v')= \{u_j,u_{j+1}\}$, and there are no cliques attached to $v'$, then set $w=u_{j+1}$ for $j\leq d-2$. If $j=d-1$ i.e., $u_{j+1}=v$, then choose $w=u_j$. 
    %\item If $N_P(v')= \{u_j,u_{j+1}\}$, and there are no cliques attached to $v'$ and along $\{v',u_j\}$, then set $w=u_{j+1}$.
    %\item If $N_P(v')= \{u_j,u_{j+1}\}$, and there are no cliques attached to $v'$ and along $\{v',u_j\},\{v',u_{j+1}\}$, then set $w=u_{j+1}$.
    \end{enumerate}
    \item Assume that $H$ is an induced subgraph of $G$. If cliques are attached to $u_{j+1}$, then set $w=u_{j+1}$ otherwise set $w=v'$.
    \end{enumerate}
\end{enumerate}

(1) Since there are no cliques attached along the vertex $u$ and there are no internal vertices in $G$ other than those in $V(P)$, any clique containing $u$ will contain $u_1$ as well, i.e., $N_G[u] \subseteq N_G[u_1].$ Thus, in $G_w$, both $u$ and $u_1=w$ become simplicial vertices. Hence $f(G_w) \geq f(G)+2$. Therefore it follows from \Cref{diameter-lemma} that $n+1 = d(G)+f(G)+1 \leq d(G_w)+f(G_w) \leq n+1.$ This shows that $G_w$ belongs to the Hibi-Madani class. Since $G_w$ is in the Hibi-Madani class and $w$ is a simplicial vertex in $G_w$, $G_w\setminus w$ is also in the Hibi-Madani class. Hence $\depth(S/J_{G_w}) = n+1$ and $\depth(S/((x_w,y_w)+J_{G_w\setminus w}))=n$. Observe that $G \setminus w$ is a disconnected graph. The component containing $u$ is a clique sum of complete graphs along $u$. Hence this component belongs to the Hibi-Madani class. If there are cliques attached to $w$ in $G$, then there will be components in $G\setminus w$ which are cliques, not containing $u$. The component containing $v$ in $G\setminus w$ will be a repeated clique sum of complete graphs along the edges and vertices of the path on the vertices $u_2,\ldots,u_{d-1},v$. Hence this component also belongs to the Hibi-Madani class. Thus for each component $G_i$ with $V(G_i) =n_i$, we get $\depth(S_{G_i}/J_{G_i})=n_i+1$. Therefore, if $c$ denotes the number of components in $G\setminus w$, then $\depth(S/((x_w,y_w)+J_{G\setminus w})) = (n-1)+c \geq n+1.$

\iffalse
$n=d(G)+f(G)-1\leq d(G\setminus w) + f(G\setminus w) \leq n $
\fi
(2a(i)) Observe that $G\setminus w$ is a disconnected graph and has no internal vertices other than those in $V(P)$. Cliques attached along $w$ in $G$, will form disconnected components in $G\setminus w$ and the component containing the path $P$ is a clique sum of complete graphs along the vertices and edges of the path $P$. Hence each component of $G\setminus w$ belongs to the Hibi-Madani class. Therefore, $\depth(S/((x_w,y_w)+J_{G\setminus w})) \geq n+1$. Since $N_P(w) \subseteq \{u_j,u_{j+1}\}$, the path $P$ remains an induced path in $G_w$. Moreover, there are no internal vertices in $G_w$ other than those in $V(P)$, this is the only induced path in $G_w$, joining $u$ and $v$. Thus $\dist_{G_w}(u,v) = d$, and hence by \Cref{diameter-lemma}, we get $d(G_w) = d$. By \cite[Lemma 3.2]{Arv-Jaco}, we get $f(G) \leq f(G_w)-1$. Therefore, $n+1 = d(G)+f(G)+1 \leq d(G_w)+f(G_w) \leq n+1$. Hence $G_w$ belongs to the Hibi-Madani class. As we concluded in (1), we can see that $G_w \setminus w$ also belongs to the Hibi-Madani class. Therefore, $\depth(S/J_{G_w}) = n+1$ and $\depth(S/((x_w,y_w)+J_{G_w\setminus w})) = n$.

(2a(ii)) Since there are no cliques attached to $v'$ and $v'$ is an internal vertex, there are more than one clique along the edge $\{v',u_j\}$. Therefore, $N_G[v']\subseteq N_G[w]$ and hence, $v'$ becomes a simplicial vertex in $G_w$. Thus $f(G_w)\geq f(G)+2$ and $d(G)-1\leq d(G_w)$ yields $n+1=d(G)+f(G)+1\leq d(G_w)+f(G_w)=n+1$. So, $G_w$ belongs to the Hibi-Madani class and similarly, one can show that $G_w\setminus w$ also belongs to the Hibi-Madani class. Hence, $\depth(S/J_{G_w}) = n+1$ and $\depth(S/((x_w,y_w)+J_{G_w\setminus w})) = n$. Now $G\setminus w$ is a disconnected graph whose component containing $v'$ is a clique sum of complete graphs along $v'$, component containing $u$ is a repeated clique sum along the vertices and edges of the path $P_1:u,u_1,\ldots,u_{j-1}$, component containing $v$ is a repeated clique sum along the vertices and edges of the path $P_2:u_{j+1},\ldots,u_{d-1},v$ and if there are cliques attached along $u_j$, then we get those cliques  without the vertex $u_j$ as components. Thus each component of $G\setminus w$ belongs to the Hibi-Madani class and hence $\depth(S/((x_w,y_w)+J_{G\setminus w}))\geq n+1$.

(2a(iii)) If $N_G[v']\subseteq N_G[w]$ (for instance, if $j=d-1$), then $v'$ becomes a simplicial vertex in $G_w$ and one can show as in the above case that $G_w,G_w\setminus w$ and each component of $G\setminus w$ is a clique or belong to the Hibi-Madani class. Hence, $\depth(S/J_{G_w}) = n+1, \depth(S/((x_w,y_w)+J_{G_w\setminus w})) = n$ and $\depth(S/((x_w,y_w)+J_{G\setminus w}))\geq n+1$.

If $N_G[v']\nsubseteq N_G[w]$ i.e., $w=u_{j+1}$ for some $j\leq d-2$ and cliques are attached along the edge $\{v',u_j\}$, then $v'$ remains an internal vertex in $G_w$. Moreover, $u_i$ remains an internal vertex for $i \neq j+1$. Thus, $f(G_w)=f(G)+1$. Also since the internal vertices in $G_w$ are $\{u_1,\ldots,u_{j},u_{j+2},\ldots,u_{d-1},v'\}$ and $\{v',u_{j}\},\{v',u_{j+2}\}\in E(G_w)$, $d(G_w) < d$. Hence, by \Cref{diameter-lemma}, $d(G_w)=d(G)-1$. Therefore we can see that $d(G_w)+f(G_w)+1=n+1$. Also, $N_{P'}(v')=\{u_j,u_{j+2}\}$ and $N_{G_w}[v']\subseteq N_{G_w}[u_j],$ where $P':u,u_1,\ldots,u_j,u_{j+2},\ldots,u_{d-1},v$ in $G_w$. Therefore, $G_w$ is the graph discussed in the first paragraph of the proof of (2a(iii)) and hence  $\depth(S/J_{G_w})\geq n+1$ (follows from the depth conclusions in the first paragraph together with \cite[Proposition 1.2.9]{bh} applied to the short exact sequence \eqref{ohtani-ses}). Similarly, one can get $\depth(S/((x_w,y_w)+J_{G_w\setminus w}))\geq n$. As we have seen earlier, $G\setminus w$ is a disconnected graph whose each component belongs to the Hibi-Madani class. So, $\depth(S/((x_w,y_w)+J_{G\setminus w}))\geq n+1$.
%Similarly, one can prove the assertion if $j=d-1$ and we take $w=u_j$.

(2b) Assume that cliques are attached to $u_{j+1}$. If $N_G[v']\subseteq N_G[w]$, then $v'$ becomes simplicial vertex in $G_w$. Then arguments similar to those in the proof of (2a(ii)), we can show that $\depth(S/J_{G_w}) = n+1$ and $\depth(S/((x_w,y_w)+J_{G_w\setminus w})) = n$. Moreover, $G \setminus w$ is a disconnected graph whose each component is either a clique (coming from a clique attached to $u_{j+1}$) or a repeated clique sum of complete graphs along the path on the vertices $u,u_1,\ldots,u_j, v', u_{j+2},\ldots,u_{d-1},v$. Hence each such component is either a complete graph or belongs to the Hibi-Madani class. Therefore, $\depth(S/((x_w,y_w)+J_{G\setminus w})) \geq n+1$.

If $N_G[v']\nsubseteq N_G[w]$, i.e., cliques are attached along $v', \{v',u_j\}$ or $\{v',u_{j+2}\}$. In this case, $v'$ remains an internal vertex in $G_w$ and  $d(G_w)+f(G_w)+1=n+1$. By taking $P'':u,u_1,\ldots,u_j,u_{j+2},\ldots,u_{d-1},v$, we see that $N_{P''}(v')=\{u_j,u_{j+2}\}$. Therefore, it follows from the proof of (2a(i)) and (2a(iii)) that $\depth(S/J_{G_w})\geq n+1$. Similarly, one can show that $\depth(S/((x_w,y_w)+J_{G_w\setminus w}))\geq n$. In this case also, it is easy to see that $G\setminus w$ is a disconnected graph whose each component is a clique or belongs to the Hibi-Madani class. So, $\depth(S/((x_w,y_w)+J_{G\setminus w}))\geq n+1$.

If cliques are not attached to $u_{j+1}$, then by the definition of the class of graphs $\mathcal{D}\setminus \mathcal{D}_1$, cliques are necessarily attached to $v'$. Taking $w=v'$, we see that the graph is of the type discussed in the first two paragraph of the proof of (2b) with the roles of $u_{j+1}$ and $v'$ interchanged. Hence, arguments similar to those give us the required inequalities on the depth.

In each of the above cases, we have shown that $\depth(S/J_{G_w}) \geq n+1$, $\depth(S/((x_w,y_w)+J_{G_w\setminus w})) \geq n$ and $\depth(S/((x_w,y_w)+J_{G\setminus w})) \geq n+1$. Therefore, by applying \cite[Proposition 1.2.9]{bh} to the short exact sequence \eqref{ohtani-ses}, we get $\depth(S/J_G)\geq n+1$.
\end{proof}

\begin{minipage}{\linewidth}
		\begin{minipage}{.5\linewidth}
		\captionsetup[figure]{labelformat=empty}
		\begin{figure}[H]
				\begin{tikzpicture}[scale=1]
				\draw (-3.5,3.16)-- (-2,3.16);
\draw (-2,3.16)-- (-0.5,3.16);
\draw (-0.5,3.16)-- (1,3.16);
%\draw [color=red] (1,3.16)-- (2.5,3.16);
\draw (-2,3.16)-- (-2,4.24);
\draw [color=red](-2,3.16)-- (-0.5,3.16);
%\draw (-2,4.24)-- (-0.5,4.24);
\draw (-0.5,4.24)-- (-0.5,3.16);
%\draw [color=red](-2,4.24)-- (-0.5,3.16);
\draw (-2,3.16)-- (-0.5,4.24);
%\draw (-0.5,4.24)-- (1,3.16);
\draw (-2.4,4.16)--(-2,3.16);
\draw (-3,3.56)--(-2,3.16);
\draw (-2.4,4.16)--(-3,3.56);
\draw (-1.25,4.78)--(-2,3.16);
\draw (-1.25,4.78)--(-0.5,3.16);
\draw (-1.25,4.78)--(-2,4.24);
\draw (-1.25,4.78)--(-0.5,4.24);
\begin{scriptsize}
\fill  (-2.4,4.16) circle (1.5pt);
\fill  (-3,3.56) circle (1.5pt);
\fill  (-3.5,3.16) circle (1.5pt);
\draw (-3.34,3) node {$u$};
\fill  (-2,3.16) circle (1.5pt);
\draw (-1.7,3) node {$u_1$};
\fill  (-0.5,3.16) circle (1.5pt);
\draw (-0.2,3) node {$u_3$};
\fill  (1,3.16) circle (1.5pt);
\draw (1.3,3) node {$v$};
\fill  (-1.25,4.78) circle (1.5pt);
\draw (-1.25,5) node {$v'$};
\fill  (-2,4.24) circle (1.5pt);
\fill  (-0.5,4.24) circle (1.5pt);
\draw (-0.2,4.5) node {$u_2$};
%\draw (-0.34,4.5) node {$A$};
\end{scriptsize}
				\end{tikzpicture}
				\caption{$(G_1)_{u_2}$}
			\end{figure}
		\end{minipage}
		\begin{minipage}{.5\linewidth}
    \captionsetup[figure]{labelformat=empty}
			\begin{figure}[H]
				\begin{tikzpicture}[scale=1]
				\draw (-3.5,3.16)-- (-2,3.16);
\draw (-2,3.16)-- (-0.5,3.16);
\draw (-0.5,3.16)-- (1,3.16);
\draw (1,3.16)-- (2.5,3.16);
%\draw (-2,3.16)-- (-2,4.24);
\draw (-2,3.16)-- (-0.5,3.16);
\draw (-1.25,4.24)--(-0.5,3.16);
\draw (-1.25,4.24)--(-2,3.16);
\draw (-2.4,4.16)--(-2,3.16);
\draw (-3,3.56)--(-2,3.16);
\draw (-2.4,4.16)--(-3,3.56);
\begin{scriptsize}
\fill  (-2.4,4.16) circle (1.5pt);
\fill  (-3,3.56) circle (1.5pt);
\fill  (-3.5,3.16) circle (1.5pt);
\draw (-3.34,3) node {$u$};
\fill  (-2,3.16) circle (1.5pt);
\draw (-1.7,3) node {$u_1$};
\fill  (-0.5,3.16) circle (1.5pt);
\draw (-0.2,3) node {$v'$};
\fill  (1,3.16) circle (1.5pt);
\draw (1.3,3) node {$u_3$};
\fill  (2.5,3.16) circle (1.5pt);
\draw (2.64,3) node {$v$};
%\fill  (-2,4.24) circle (1.5pt);
\fill  (-1.25,4.24) circle (1.5pt);
%\draw (-0.2,4.5) node {$v'$};
%\draw (-0.34,4.5) node {$A$};
\end{scriptsize}
				\end{tikzpicture}
				\caption{$G_1\setminus u_2$}
			\end{figure}
			
			\end{minipage}
	\end{minipage}

\begin{theorem}\label{claim2}
If $G\in \mathcal{D}_1$, then $\depth(S/J_G)=n$.
\end{theorem}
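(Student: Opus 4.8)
Since $G\in\mathcal D_1\subseteq\mathcal D$ we have $\kappa(G)=1$ and $d(G)+f(G)=n$, so the general bounds recalled above give $n=d(G)+f(G)\le\depth(S/J_G)\le n+2-\kappa(G)=n+1$. By the Auslander--Buchsbaum formula the statement is equivalent to $\pd(S/J_G)=n$, and since $\depth(S/J_G)\ge n$ already forces $\pd(S/J_G)\le n$, everything reduces to producing a single nonzero Betti number $\beta_{n,\,*}(S/J_G)$. Recall the shape of a graph in $\mathcal D_1$: both $u,v$ are simplicial, the unique internal vertex off the diametral induced path $P$ is $v'$, the graph $H$ is induced so that $N_P(v')=\{u_j,u_{j+1},u_{j+2}\}$, and neither $v'$ nor $u_{j+1}$ is a cut vertex. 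The plan is to apply Ohtani's lemma (\Cref{ohtani-lemma}) at the internal vertex $w=v'$ and exploit the short exact sequence \eqref{ohtani-ses}, abbreviating $A=S/((x_{v'},y_{v'})+J_{G\setminus v'})$, $B=S/J_{G_{v'}}$ and $C=S/((x_{v'},y_{v'})+J_{G_{v'}\setminus v'})$.

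The first block of work is structural. Because $v'$ is not a cut vertex, $G\setminus v'$ is connected, and as $P$ is an induced path of length $d$ avoiding $v'$, \Cref{g-v} together with the observation that every $u_1,\dots,u_{d-1}$ remains internal gives $d(G\setminus v')=d$ and $f(G\setminus v')=f$, whence $d(G\setminus v')+f(G\setminus v')=n=(n-1)+1$; the general inequality $d+f\le|V|+2-\kappa$ then forces $\kappa(G\setminus v')=1$, so $G\setminus v'$ lies in the Hibi--Madani class with diameter $d$. In $G_{v'}$ the edge $u_ju_{j+2}$ is created, producing the shortcut $u,\dots,u_j,u_{j+2},\dots,v$, so $\dist_{G_{v'}}(u,v)=d-1$; using \Cref{diameter-lemma} and tracking which of the internal vertices survive, I would verify that $G_{v'}$ and $G_{v'}\setminus v'$ both lie in the Hibi--Madani class with diameter $d-1$. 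Granting this, \Cref{extremal-hibi-madani} locates the extremal Betti numbers in the ring on the $n-1$ vertices other than $v'$: the projective dimension is $n-2$, with extremal internal degree $(n-2)+d$ for $G\setminus v'$ and $(n-2)+(d-1)$ for $G_{v'}\setminus v'$, while $\pd(S/J_{G_{v'}})=n-1$. Accounting for the two--step Koszul shift coming from killing $x_{v'},y_{v'}$ (which raises homological and internal degree each by $2$), this yields $\pd_S(A)=\pd_S(C)=n$ and $\pd_S(B)=n-1$, with $\beta_{n,\,n+d}(A)\neq0$, and with the extremal entry of $C$ in homological degree $n$ sitting in internal degree $n+d-1$, so that $\beta_{n,\,n+d}(C)=0$.

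The homological conclusion is then immediate from the long exact sequence \eqref{ohtani-tor}. Since $\pd_S(B)=n-1$ we have $\Tor_n^S(B,\K)=0$, and since $\pd_S(C)=n$ we have $\Tor_{n+1}^S(C,\K)=0$, so the relevant segment collapses to
\[
0\longrightarrow \Tor_n^S(S/J_G,\K)\longrightarrow \Tor_n^S(A,\K)\stackrel{\varphi}{\longrightarrow}\Tor_n^S(C,\K).
\]
Reading this in internal degree $n+d$, the target vanishes because $\beta_{n,\,n+d}(C)=0$, whence $\Tor_n^S(S/J_G,\K)_{n+d}\cong\Tor_n^S(A,\K)_{n+d}\neq0$ since $\beta_{n,\,n+d}(A)\neq0$. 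Thus $\beta_{n,\,n+d}(S/J_G)\neq0$, giving $\pd(S/J_G)=n$ and hence $\depth(S/J_G)=n$. The genuine obstacle lies in the structural block: one must show rigorously that passing to $G_{v'}$ and $G_{v'}\setminus v'$ really lowers the diameter to $d-1$ (the lower bound is the explicit shortcut, but the upper bound requires controlling \emph{every} diametral pair, using that each diametral shortest path uses $d-1$ of the only $d$ internal vertices and so must meet $\{u_{j+1},v'\}$) and keeps $\kappa=1$. This is exactly where the hypotheses that $v'$ and $u_{j+1}$ are \emph{not} cut vertices are used, in contrast with \Cref{claim1}, where a cut vertex lets one retain diameter $d$ and push the depth up to $n+1$. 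Finally, the low--diameter edge cases (notably $d=2$), where some of the derived graphs may degenerate to complete graphs and the extremal--degree comparison breaks, would have to be dispatched separately by direct computation.
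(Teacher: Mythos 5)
Your overall strategy is the same as the paper's: apply Ohtani's lemma at one of the two ``twin'' internal vertices ($v'$ in your case, $u_{j+1}$ in the paper's -- these play symmetric roles in $H$), identify $G\setminus w$ as a Hibi--Madani graph of diameter $d$ so that $\beta_{n,n+d}(A)\neq 0$ by \Cref{extremal-hibi-madani}, and then read off $\beta_{n,n+d}(S/J_G)\neq 0$ from the long exact sequence \eqref{ohtani-tor} in degree $n+d$. The treatment of $A$ and the collapse of the long exact sequence are correct. But there is a genuine gap in the structural block, and it is not the one you flagged: the claim that $G_{v'}$ and $G_{v'}\setminus v'$ ``lie in the Hibi--Madani class with diameter $d-1$'' is simply \emph{false} for a large part of $\mathcal{D}_1$. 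The definition of $\mathcal{D}_1$ only forbids cliques attached at the single vertices $v'$ and $u_{j+1}$; it allows cliques attached along the edges $\{u_j,u_{j+1}\}$, $\{u_{j+1},u_{j+2}\}$ (and $\{v',u_j\}$, $\{v',u_{j+2}\}$). Whenever a clique sits on $\{u_j,u_{j+1}\}$ or $\{u_{j+1},u_{j+2}\}$, i.e.\ $N_G[u_{j+1}]\nsubseteq N_G[v']$, the vertex $u_{j+1}$ has a neighbor in $G_{v'}$ (a vertex of that attached clique) that is not adjacent to $u_{j+2}$ (resp.\ $u_j$), so $u_{j+1}$ remains an \emph{internal} vertex of $G_{v'}$. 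Then $f(G_{v'})=f(G)+1$ and $d(G_{v'})=d-1$, giving $d(G_{v'})+f(G_{v'})=n$, one less than the Hibi--Madani bound $n+1$; so $G_{v'},\,G_{v'}\setminus v'\in\mathcal{D}\setminus\mathcal{D}_1$ and \Cref{extremal-hibi-madani} does not apply to them.

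In that bad case your argument for $B$ can be repaired cheaply ($\pd(S/J_{G_{v'}})=n-1$ still follows from \Cref{claim1} plus Auslander--Buchsbaum), but the argument for $C$ cannot: you need the \emph{graded} vanishing $\beta_{n,n+d}(C)=0$, equivalently that the top homological degree of $S_{G_{v'}\setminus v'}/J_{G_{v'}\setminus v'}$ carries nothing in internal degree $\geq (n-2)+d$, and \Cref{claim1} only controls depth, not the degrees of the top Tor. Supplying this vanishing for graphs in $\mathcal{D}\setminus\mathcal{D}_1$ of type (2a(iii)) is exactly the content of the ``Claim'' inside the paper's proof, which costs a second application of Ohtani's lemma at an auxiliary vertex $w'\in\{u_j,u_{j+2}\}$ and a further case analysis. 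Note also that switching to the paper's choice $w=u_{j+1}$ does not avoid the problem -- it merely swaps which attached-clique configurations are bad (cliques on $\{v',u_j\}$ or $\{v',u_{j+2}\}$ instead) -- so no choice of the single Ohtani vertex lets you stay entirely inside the Hibi--Madani class; the extra extremal-Betti-number argument is unavoidable, and without it your proof is incomplete.
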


\begin{proof} 
Since $n=d(G)+f(G) \leq \depth (S/J_G)$, it is enough to prove that $\pd(S/J_G)\geq n.$ We show that $\beta_{n,n+d(G)}(S/J_G)\neq 0$.

Set $w=u_{j+1}$. Since $G$ is clique sum of complete graphs, $u_j$ and $u_{j+2}$ are adjacent to $v'$ and there are no cliques attached along $u_{j+1}$, $G\setminus w$ is a connected graph. By \Cref{g-v}, we get $d(G)\leq d(G\setminus w)$ and $f(G) \leq f(G\setminus w)$. Hence \[n=d(G)+f(G) \leq d(G\setminus w) + f(G\setminus w) \leq (n-1)+2-\kappa(G\setminus w) = n.\]
Since both the above inequalities become equalities, we actually get $d(G) = d(G\setminus w)$, $f(G) = f(G\setminus w)$ and $d(G\setminus w)+f(G\setminus w)=|V(G\setminus w)|+1$. Therefore, by \Cref{extremal-hibi-madani}, $\beta_{n,n+d}(S/((x_w,y_w)+J_{G\setminus w}))$ is an extremal Betti number.

Now we study the graphs $G_w$ and $G_w\setminus w$. If $N_G[v']\subseteq N_G[w]$, then both $w$ and $v'$ become simplicial vertices in $G_w$. Therefore,  $f(G_w)\geq f(G)+2$ and $f(G_w\setminus w)\geq f(G)+1$. If $d(G_w)=d$, then the induced path providing the diameter gives at least $d-1$ internal vertices in $G_w$ which contradicts the fact that $\iv(G_w)\leq d-2$. Thus, \Cref{diameter-lemma} yields that $d(G_w)=d(G)-1$. Similarly, we can see that $d(G_w\setminus w)=d(G)-1$. Now
\[ n+1=d(G)+f(G)+1\leq d(G_w)+f(G_w)\leq n+1.
\]
Hence, $G_w$ belongs to the Hibi-Madani class. Similarly, we can show that $G_w\setminus w$ also belongs to the Hibi-Madani class. Therefore it follows from \Cref{extremal-hibi-madani} and the fact $d(G_w)=d(G_w\setminus w)=d(G)-1$ that
\[\beta_{n-1,n-1+d-1}(S/J_{G_w}) ~ ~ \text{ and } \beta_{n,n+d-1}(S/((x_w,y_w)+J_{G_w\setminus w})) \] 
are extremal Betti numbers.

Suppose $N_G[v']\nsubseteq N_G[w]$, i.e., cliques are attached along $\{v',u_j\}$ or $\{v',u_{j+2}\}$. %Note that if $j=d-2$, then there are no cliques along $\{v',u_{j+2}\}$ as $u_{j+2}=v$ is a simplicial vertex. 
In this case, $v'$ remains an internal vertex in $G_w$ and so $f(G_w)=f(G)+1$. The internal vertices of $G_w$ are $u_1,\ldots,u_j,u_{j+2},\ldots,u_{d-1},v'.$ Since any path of length $d$ has at least $d-1$ internal vertices and $\{v',u_j\},\{v',u_{j+2}\}\in E(G_w)$, there is no induced path of length $d$ in $G_w$. Thus $d(G_w)\leq d-1$ and hence by \Cref{diameter-lemma}, $d(G_w)=d-1$. Therefore, $d(G_w)+f(G_w)+1=n+1$. Taking $P': u,u_1,\ldots,u_j,u_{j+2},\ldots,u_{d-1},v$ in $G_w$ and observing that $v'$ is an internal vertex in $G_w$ with $N_{P'}(v')=\{u_j,u_{j+2}\}$, we can see that $G_w$ is a graph of the type considered in \Cref{claim1}(2a(iii)). It follows from \Cref{claim1} and Auslander-Buchsbaum formula that $\pd(S/J_{G_w})=n-1$.

\noindent
\textbf{Claim:} $\beta_{n-1,n-1+d-1}(S/J_{G_w})$ is an extremal Betti number of $S/J_{G_w}$.

\noindent
\textit{Proof of the Claim}: Let $H=G_w$. Choose $w'=u_{j+2}$ if $0\leq j\leq d-3$, otherwise choose $w'=u_j$. If $N_H[v']\subseteq N_H[w']$ (for instance, if $j=d-2$), then it follows from the proof of \Cref{claim1}(2a(iii)) that $H_{w'},H_{w'}\setminus w'$ and each component of the disconnected graph $H\setminus w'$ is a clique or belong to the Hibi-Madani class with $d(H_{w'})=d(H_{w'}\setminus w')=d(H)-1=d-2$. Hence it follows from \Cref{extremal-hibi-madani} and \eqref{Bettiproduct} that  $$\beta_{n-1,n-1+d-2}(S/J_{H_{w'}}) \text{ and }\beta_{n,n+d-2}(S/((x_{w'},y_{w'})+J_{H_{w'}\setminus w'}))  $$ are extremal Betti numbers. Since $H\setminus w'$ is disconnected, by \cite[Corollary 2.5]{Hibi-Madani}, Auslander-Buchsbaum formula and \eqref{Bettiproduct}, $\pd(S/((x_{w'},y_{w'})+J_{H\setminus w'}))\leq n-1$  and as in the proof of \Cref{extremal-hibi-madani}, we can show that $\beta_{n-1,n-1+j}(S/((x_{w'},y_{w'})+J_{H\setminus w'}))=0$ for $j\geq d$. Therefore, it follows from \eqref{ohtani-tor} that for $j\geq d-2$
\begin{equation*}
\Tor_{n}^{S}\left( \frac{S}{(x_{w'},y_{w'})+J_{H_{w'}\setminus w'}},\K\right)_{n+j} \simeq \Tor_{n-1}^{S}\left( \frac{S}{J_{H}},\K\right)_{n+j}.
\end{equation*}
This implies that $\beta_{n-1,n+d-2}(S/J_H)$ is an extremal Betti number of $S/J_H$.

Suppose $N_H[v']\nsubseteq N_H[w']$, i.e., cliques are attached along $\{v',u_j\}$. Then it follows from the proof of \Cref{claim1}(2a(iii)) that $H_{w'}$ and $H_{w'}\setminus w'$ belong to the class of graphs discussed in the previous paragraph and hence $\beta_{n-1,n-1+d-2}(S/J_{H_{w'}})$ and $\beta_{n,n+d-2}(S/((x_{w'},y_{w'})+J_{H_{w'}\setminus w'}))$ are extremal Betti numbers. Also, $H\setminus w'$ is a disconnected graph whose each component is a clique or belongs to the Hibi-Madani class. Thus, it follows from \cite[Corollary 2.5]{Hibi-Madani}, Auslander-Buchsbaum formula and \eqref{Bettiproduct} that $\pd(S/((x_{w'},y_{w'})+J_{H\setminus w'}))\leq n-1.$ Therefore, from long exact sequence \eqref{ohtani-tor}, we see that $\beta_{n-1,n-1+d-1}(S/J_H)$ is an extremal Betti number of $S/J_H$. This completes the proof of the claim.

Structurally, $G_w$ and $G_w\setminus w$ are very similar. Using arguments similar to the ones used above, we can show that $\beta_{n,n+d-1}(S/((x_w,y_w)+J_{G_w\setminus w}))$ is an extremal Betti number of $S/((x_w,y_w)+J_{G_w\setminus w})$.

Taking $i=n$ and $j=d$ in \eqref{ohtani-tor}, we get
\begin{equation*}
\Tor_{n}^{S}\left( \frac{S}{J_G},\K\right)_{n+d}\simeq \Tor_{n}^{S}\left( \frac{S}{(x_w,y_w)+J_{G\setminus w}},\K\right)_{n+d}.
\end{equation*}
This implies that $\beta_{n,n+d}(S/J_G)\neq 0$ so that $\pd(S/J_G) \geq n.$ Hence, $\depth(S/J_G)=n.$
\end{proof}

Now we consider non-chordal graphs, i.e., graphs having precisely one cycle, $C_4$, as an induced subgraph. Since $\kappa(G)=1$, there is only one internal vertex outside $V(P)$. Note also that all vertices of $C_4$ are internal. Hence $|V(C_4) \cap V(P)|=3$, say $V(C_4) \cap V(P) = \{u_j, u_{j+1}, u_{j+2}\}$ for some $j\in \{1, \ldots, d-3\}$. Consider the graph $H'$:

\captionsetup[figure]{labelformat=empty}
			
			\begin{figure}[H]
\begin{tikzpicture}[scale=1]
\draw[dotted] (-6, 0) -- (-5,0);
\draw[dotted] (-3, 0) -- (-2,0);
\draw (-7,0)-- (-6,0);
\draw  (-5,0)-- (-4,0);
\draw  (-4,0)-- (-3,0);
\draw  (-4,1)-- (-5,0);
%\draw  (-4,1)-- (-4,0);
\draw  (-4,1)-- (-3,0);
\draw  (-2,0)-- (-1,0);
\begin{scriptsize}
\fill  (-7,0) circle (1.5pt);
\draw (-6.84,-0.3) node {$u$};
\fill  (-6,0) circle (1.5pt);
\draw (-5.84,-0.3) node {$u_1$};
\fill  (-5,0) circle (1.5pt);
\draw (-4.84,-0.3) node {$u_j$};
\fill  (-4,0) circle (1.5pt);
\draw (-3.84,-0.3) node {$u_{j+1}$};
\fill  (-3,0) circle (1.5pt);
\draw (-2.84,-0.3) node {$u_{j+2}$};
\fill (-2,0) circle (1.5pt);
\draw (-1.84,-0.3) node {$u_{d-1}$};
\fill  (-1,0) circle (1.5pt);
\draw (-0.84,-0.3) node {$v$};
\fill  (-4,1) circle (1.5pt);
\draw (-4,1.3) node {$v'$};
\end{scriptsize}
				\end{tikzpicture}
				\caption{$H'$}
			\end{figure}

Note that $H'$ is an induced subgraph of $G$ in this situation.
Let $C: u_j, u_{j+1}, u_{j+2}, v', u_j$ be the induced $4$-cycle in $G$, where $v'  \notin V(P)$. Then $G \setminus v'$ is a chordal graph having internal vertices $\{u_1,\ldots,u_{d-1}\}$. Hence, $G\setminus v'$ is a clique sum of complete graphs along the complete subgraphs of $G[u_1,\ldots,u_{d-1}]$. It  may also be noted that there are no cliques of size at least 3 in the induced subgraph $G[u_1,\ldots,u_{d-1},v']$. Therefore, in $G$, intersection of any two cliques will either be a vertex or an edge of $G[u_1,\ldots,u_{d-1},v']$.

%Let $H'$ be the graph obtained by taking clique-sum of complete graphs along the edges and vertices of $C$.  Then the graph $G$ can be described as
%\[
%G = H_0 \cup_{u_1} H_1 \cup_{u_2} \cdots \cup_{u_{j-2}}H_{j-2} \cup_{u_{j-1}} H' \cup_{u_{j+1}} H_{j+1} \cup_{u_{j+2}} \cdots \cup_{u_{d-1}}H_{d-1}, \]
%where $H_0$ has only one clique along $\{u, u_1\}$.
\begin{minipage}{\linewidth}
		\begin{minipage}{.5\linewidth}
		
		\captionsetup[figure]{labelformat=empty}
\begin{figure}[H]
				\begin{tikzpicture}[scale=1]
\draw (-3,2.6)-- (-1.5,2.6);
\draw (-1.5,2.6)-- (0,2.6);
\draw (0,2.6)-- (1.5,2.6);
\draw (1.5,2.6)-- (3,2.6);
\draw (-1.5,2.6)-- (0,3.84);
\draw (0,3.84)-- (1.5,2.6);
\draw (-3,2.6)-- (-2.24,3.84);
\draw (-2.24,3.84)-- (-1.5,2.6);
\draw (1.5,3.84)-- (0,3.84);
\draw (0,4.72)-- (0.98,4.72);
\draw (0,3.84)-- (0,4.72);
\draw (0,3.84)-- (0.98,4.72);
\draw (0,2.6)-- (0,1.8);
\draw (0,2.6)-- (1,1.8);
\draw (0,1.8)-- (1,1.8);
\begin{scriptsize}
\fill  (-3,2.6) circle (1.5pt);
\draw (-2.84,2.4) node {$u$};
\fill  (-1.5,2.6) circle (1.5pt);
\draw (-1.34,2.4) node {$u_1$};
\fill  (0,2.6) circle (1.5pt);
\draw (0.16,2.8) node {$u_2$};
\fill  (0,1.8) circle (1.5pt);
\fill  (1,1.8) circle (1.5pt);
\fill  (1.5,2.6) circle (1.5pt);
\draw (1.66,2.4) node {$u_3$};
\fill  (3,2.6) circle (1.5pt);
\draw (3.16,2.4) node {$u_4$};
\fill  (0,3.84) circle (1.5pt);
\draw (-0.25,4.1) node {$v'$};
\fill (-2.24,3.84) circle (1.5pt);
%\draw (-2.08,4.1) node {$G$};
\fill  (1.5,3.84) circle (1.5pt);
%\draw (1.66,4.1) node {$H$};
\fill  (0,4.72) circle (1.5pt);
%\draw (0.1,4.98) node {$I$};
\fill  (0.98,4.72) circle (1.5pt);
\end{scriptsize}
\end{tikzpicture}
\caption{$G_5$}
\end{figure}
		\end{minipage}
		\begin{minipage}{.5\linewidth}
		\captionsetup[figure]{labelformat=empty}
\begin{figure}[H]
				\begin{tikzpicture}[scale=1]
\draw (-3,2.6)-- (-1.5,2.6);
\draw (-1.5,2.6)-- (0,2.6);
\draw (0,2.6)-- (1.5,2.6);
\draw (1.5,2.6)-- (3,2.6);
\draw (-1.5,2.6)-- (0,3.84);
\draw (0,3.84)-- (1.5,2.6);
\draw (-3,2.6)-- (-2.24,3.84);
\draw (-2.24,3.84)-- (-1.5,2.6);
%\draw (1.5,3.84)-- (0,3.84);
%\draw (0,4.72)-- (0.98,4.72);
%\draw (0,3.84)-- (0,4.72);
%\draw (0,3.84)-- (0.98,4.72);
\draw (-1.5,3.84)-- (-1.5,2.6);
\draw (0,3.84)-- (-1.5,3.84);
\begin{scriptsize}
\fill  (-3,2.6) circle (1.5pt);
\draw (-2.84,2.4) node {$u$};
\fill  (-1.5,2.6) circle (1.5pt);
\draw (-1.34,2.4) node {$u_1$};
\fill  (-1.5,3.84) circle (1.5pt);
\fill  (0,2.6) circle (1.5pt);
\draw (0.16,2.8) node {$u_2$};
\draw (1.66,2.4) node {$u_3$};
\fill  (3,2.6) circle (1.5pt);
\draw (3.16,2.4) node {$u_4$};
\fill  (0,3.84) circle (1.5pt);
\draw (-0.25,4.1) node {$v'$};
\fill (-2.24,3.84) circle (1.5pt);
%\draw (-2.08,4.1) node {$G$};
%\fill  (1.5,3.84) circle (1.5pt);
%\draw (1.66,4.1) node {$H$};
%\fill  (0,4.72) circle (1.5pt);
%\draw (0.1,4.98) node {$I$};
%\fill  (0.98,4.72) circle (1.5pt);
%\draw (1.08,4.98) node {$J$};
%\draw (1.08,4.98) node {$J$};
\end{scriptsize}
\end{tikzpicture}
\caption{$G_6$}
\end{figure}
		\end{minipage}
	\end{minipage}

\begin{minipage}{\linewidth}
		\begin{minipage}{.5\linewidth}
		
		\captionsetup[figure]{labelformat=empty}
\begin{figure}[H]
				\begin{tikzpicture}[scale=1]
%\draw (-3,2.6)-- (-1.5,2.6);
\draw (-1.5,2.6)-- (0,2.6);
\draw (0,2.6)-- (1.5,2.6);
\draw (1.5,2.6)-- (3,2.6);
\draw (-1.5,2.6)-- (0,3.84);
\draw (0,3.84)-- (1.5,2.6);
%\draw (-3,2.6)-- (-2.24,3.84);
%\draw (-2.24,3.84)-- (-1.5,2.6);
\draw (1.5,3.84)-- (0,3.84);
\draw (0,4.72)-- (0.98,4.72);
\draw (0,3.84)-- (0,4.72);
\draw (0,3.84)-- (0.98,4.72);
\draw (0,2.6)-- (0,1.8);
\draw (0,2.6)-- (1,1.8);
\draw (0,1.8)-- (1,1.8);
\draw (-2.5,3.84)-- (-1.5,4.5);
\draw (-2.5,3.84)-- (-1.5,2.6);
\draw [color=red](0,3.84)-- (-1.5,4.5);
\draw [color=red](0,3.84)-- (0,2.6);
\draw [color=red](-2.5,3.84)-- (0,2.6);
\draw [color=red](-1.5,4.5)-- (0,2.6);
\draw (-1.5,4.5)-- (-1.5,2.6);
\draw [color=red](0,3.84)-- (-2.5,3.84);
\begin{scriptsize}
\fill  (-2.5,3.84) circle (1.5pt);
\draw (-2.84,3.7) node {$u$};
\fill  (-1.5,2.6) circle (1.5pt);
\draw (-1.34,2.4) node {$u_1$};
\fill  (0,2.6) circle (1.5pt);
\draw (0.25,2.8) node {$u_2$};
\fill  (0,1.8) circle (1.5pt);
\fill  (1,1.8) circle (1.5pt);
\fill  (1.5,2.6) circle (1.5pt);
\draw (1.66,2.4) node {$u_3$};
\fill  (3,2.6) circle (1.5pt);
\draw (3.16,2.4) node {$u_4$};
\fill  (0,3.84) circle (1.5pt);
\draw (-0.25,4.2) node {$v'$};
\fill (-1.5,4.5) circle (1.5pt);
%\draw (-2.08,4.1) node {$G$};
\fill  (1.5,3.84) circle (1.5pt);
%\draw (1.66,4.1) node {$H$};
\fill  (0,4.72) circle (1.5pt);
%\draw (0.1,4.98) node {$I$};
\fill  (0.98,4.72) circle (1.5pt);
%\draw (1.08,4.98) node {$J$};
\end{scriptsize}
\end{tikzpicture}
\caption{$(G_5)_{u_1}$}
\end{figure}
		\end{minipage}
		\begin{minipage}{.5\linewidth}
		\captionsetup[figure]{labelformat=empty}
\begin{figure}[H]
				\begin{tikzpicture}[scale=1]
%\draw (-3,2.6)-- (-1.5,2.6);
%\draw (-1.5,2.6)-- (0,2.6);
\draw (0,2.6)-- (1.5,2.6);
\draw (1.5,2.6)-- (3,2.6);
%\draw (-1.5,2.6)-- (0,3.84);
\draw (0,3.84)-- (0,2.6);
\draw (0,3.84)-- (-1.5,2.6);
\draw (-2.5,2.6)-- (-2.5,3.84);
\draw (0,2.6)--(-1.5,2.6);
%\draw (-2.24,3.84)-- (-1.5,2.6);
%\draw (1.5,3.84)-- (0,3.84);
%\draw (0,4.72)-- (0.98,4.72);
%\draw (0,3.84)-- (0,4.72);
%\draw (0,3.84)-- (0.98,4.72);
%\draw (-1.5,3.84)-- (-1.5,2.6);
\draw (2.5,3.84)-- (1.5,3.84);
\draw (1.5,3.84)-- (1.5,2.6);
\draw (1.5,4.8)-- (2.5,4.8);
\draw (1.5,4.8) --(1.5,3.84);
\draw (2.5,4.8) -- (1.5,3.84);
\begin{scriptsize}
\fill (-2.5,3.84) circle (1.5pt);
\fill  (-2.5,2.6) circle (1.5pt);
\draw (-2.4,2.4) node {$u$};
%\fill  (-1.5,2.6) circle (1.5pt);
%\draw (-1.34,2.4) node {$w=u_1$};
\fill  (-1.5,2.6) circle (1.5pt);
\fill  (0,2.6) circle (1.5pt);
\draw (0.16,2.4) node {$u_2$};
\fill  (2.5,4.8) circle (1.5pt);
\fill  (1.5,4.8) circle (1.5pt);
\fill  (1.5,2.6) circle (1.5pt);
\draw (1.66,2.4) node {$u_3$};
\fill  (2.5,3.84) circle (1.5pt);
\fill  (3,2.6) circle (1.5pt);
\draw (3.16,2.4) node {$u_4$};
\fill  (1.5,3.84) circle (1.5pt);
\fill  (0,3.84) circle (1.5pt);
\draw (1.3,4.1) node {$v'$};
%\draw (-2.08,4.1) node {$G$};
%\fill  (1.5,3.84) circle (1.5pt);
%\draw (1.66,4.1) node {$H$};
%\fill  (0,4.72) circle (1.5pt);
%\draw (0.1,4.98) node {$I$};
%\fill  (0.98,4.72) circle (1.5pt);
%\draw (1.08,4.98) node {$J$};
\end{scriptsize}
\end{tikzpicture}
\caption{$G_5\setminus u_1$}
\end{figure}
		\end{minipage}
	\end{minipage}

We now compute the depth of $S/J_G$ for the above described class of graphs.

\begin{theorem}\label{claim3}
Let $G$ be a non-chordal graph with $\kappa(G)=1$ and $H'$ as an induced subgraph. If $\mathcal{C}(G) \cap \{\{u_{j+1}\},\{ v'\}\} \neq \emptyset$, then $\depth(S/J_G)=n+1.$
\end{theorem}

\begin{proof}
Set $w=u_{j},$ and in $G_w$ set $P':u,u_1,\ldots,u_{j-1},u_{j+1},\ldots,u_{d-1},v$. Then we see that $v'$ is an internal vertex in $G_w$ with $N_{P'}(v')=\{u_{j-1},u_{j+1},u_{j+2}\}$ and cliques are attached to $u_{j+1}$ or to $v'$. Since $d(G_w) < d$, by \Cref{diameter-lemma}, we get $d(G_w)=d-1$. Therefore, $d(G_w)+f(G_w)+1=d(G)+f(G)+1=n+1$ which implies that $G_w, G_w\setminus w\in \in \mathcal{D} \setminus \mathcal{D}_1$. Hence, by \Cref{claim1}, $\depth(S/J_{G_w})=n+1$ and $\depth(S/((x_w,y_w)+J_{G_w \setminus w}))=n$. Also, $G\setminus w$ is a disconnected graph whose each component is either a complete graph or belongs to the Hibi-Madani class (if either $u_{j+1}$ or $v'$ is a simplicial vertex in $G \setminus w$) or belongs to the class $ \mathcal{D} \setminus \mathcal{D}_1$ (if both $u_{j+1}$ and $v'$ are internal vertices in $G \setminus w$). In either case, $\depth(S/((x_w,y_w)+J_{G\setminus w}))\geq n+1$. Hence, by applying \cite[Proposition 1.2.9]{bh} to the short exact sequence $\eqref{ohtani-ses}$, we get $\depth(S/J_G)\geq n+1$. Thus $\depth(S/J_G)=n+1$.
\end{proof}

\begin{minipage}{\linewidth}
		\begin{minipage}{.5\linewidth}
		
		\captionsetup[figure]{labelformat=empty}
\begin{figure}[H]
				\begin{tikzpicture}[scale=1]
\draw (-1.5,2)-- (0,2.6);
\draw (0,2.6)-- (1.5,2.6);
\draw (1.5,2.6)-- (3,2.6);
\draw (-1.5,2)-- (0,3.84);
\draw (0,3.84)-- (1.5,2.6);
%\draw (-3,2.6)-- (-2.24,3.84);
%\draw (-2.24,3.84)-- (-1.5,2.6);
%\draw (1.5,3.84)-- (0,3.84);
%\draw (0,4.72)-- (0.98,4.72);
%\draw (0,3.84)-- (0,4.72);
%\draw (0,3.84)-- (0.98,4.72);
%\draw (0,2.6)-- (0,1.8);
%\draw (0,2.6)-- (1,1.8);
%\draw (0,1.8)-- (1,1.8);
\draw [color=red](-2.5,3.84)-- (-1.5,4.5);
\draw (-2.5,3.84)-- (-1.5,2);
\draw (0,3.84)-- (-1.5,4.5);
\draw [color=red](0,3.84)-- (0,2.6);
\draw [color=red](-2.5,3.84)-- (0,2.6);
\draw [color=red](-1.5,4.5)-- (0,2.6);
\draw (-1.5,4.5)-- (-1.5,2);
\draw [color=red](0,3.84)-- (-2.5,3.84);
\draw (-1.5,2)--(-2.5,2.6);
\draw [color=red](0,2.6)--(-2.5,2.6);
\draw (-2.5,3.84)--(-2.5,2.6);
\draw [color=red](0,3.84)--(-2.5,2.6);
\draw [color=red](-1.5,4.5)--(-2.5,2.6);
\begin{scriptsize}
\fill  (-2.5,2.6) circle (1.5pt);
\draw (-2.84,2.7) node {$u$};
\fill  (-1.5,2) circle (1.5pt);
\draw (-1.34,1.8) node {$w=u_1$};
\fill  (0,2.6) circle (1.5pt);
\draw (0.25,2.8) node {$u_2$};
%\fill  (0,1.8) circle (1.5pt);
%\fill  (1,1.8) circle (1.5pt);
\fill  (1.5,2.6) circle (1.5pt);
\draw (1.66,2.4) node {$u_3$};
\fill  (3,2.6) circle (1.5pt);
\draw (3.16,2.4) node {$u_4$};
\fill  (0,3.84) circle (1.5pt);
\draw (0.2,4.1) node {$v'$};
\fill  (-2.5,3.84) circle (1.5pt);
\fill (-1.5,4.5) circle (1.5pt);
%\draw (-2.08,4.1) node {$G$};
%\fill  (1.5,3.84) circle (2.5pt);
%\draw (1.66,4.1) node {$H$};
%\fill  (0,4.72) circle (1.5pt);
%\draw (0.1,4.98) node {$I$};
%\fill  (0.98,4.72) circle (1.5pt);
\end{scriptsize}
\end{tikzpicture}
\caption{$(G_6)_{u_1}$}
\end{figure}
		\end{minipage}
		\begin{minipage}{.5\linewidth}
		\captionsetup[figure]{labelformat=empty}
\begin{figure}[H]
				\begin{tikzpicture}[scale=1]
\draw (0,2.6)-- (1.5,2.6);
\draw (1.5,2.6)-- (3,2.6);
%\draw (-1.5,2.6)-- (0,3.84);
%\draw (0,3.84)-- (0,2.6);
%\draw (0,3.84)-- (-1.5,2.6);
\draw (-1,2.6)-- (-1,3.84);
%\draw (0,2.6)--(-1.5,2.6);
%\draw (-2.24,3.84)-- (-1.5,2.6);
%\draw (1.5,3.84)-- (0,3.84);
%\draw (0,4.72)-- (0.98,4.72);
%\draw (0,3.84)-- (0,4.72);
%\draw (0,3.84)-- (0.98,4.72);
%\draw (-1.5,3.84)-- (-1.5,2.6);
%\draw (2.5,3.84)-- (1.5,3.84);
\draw (1.5,3.84)-- (1.5,2.6);
%\draw (1.5,4.8)-- (2.5,4.8);
\draw (1.5,4.8) --(1.5,3.84);
%\draw (2.5,4.8) -- (1.5,3.84);
\begin{scriptsize}
\fill (-1,3.84) circle (1.5pt);
\fill  (-1,2.6) circle (1.5pt);
\draw (-0.84,2.4) node {$u$};
%\fill  (-1.5,2.6) circle (1.5pt);
%\draw (-1.34,2.4) node {$w=u_1$};
%\fill  (-1.5,2.6) circle (1.5pt);
\fill  (0,2.6) circle (1.5pt);
\draw (0.16,2.4) node {$u_2$};
%\fill  (2.5,4.8) circle (1.5pt);
\fill  (1.5,4.8) circle (1.5pt);
\fill  (1.5,2.6) circle (1.5pt);
\draw (1.66,2.4) node {$u_3$};
%\fill  (2.5,3.84) circle (2.5pt);
\fill  (3,2.6) circle (1.5pt);
\draw (3.16,2.4) node {$u_4$};
\fill  (1.5,3.84) circle (1.5pt);
%\fill  (0,3.84) circle (1.5pt);
\draw (1.3,4.1) node {$v'$};
\end{scriptsize}
\end{tikzpicture}
\caption{$G_6\setminus u_1$}
\end{figure}
		\end{minipage}
	\end{minipage}

\begin{theorem}\label{claim4}
Let $G$ be a non-chordal graph with $\kappa(G)=1$ and $H'$ as an induced subgraph. If $\mathcal{C}(G) \cap \{\{u_{j+1}\},\{ v'\}\} = \emptyset$, then $\depth(S/J_G)=n.$

\end{theorem}

\begin{proof}
As in the proof of \Cref{claim2}, we show that $\beta_{n,n+d(G)}(S/J_G)\neq 0.$ Taking $w=u_{j}$, we can see that $G_w,G_w\setminus w \in \mathcal{D}_1$. Therefore, it follows from the proof of \Cref{claim2} that $\beta_{n,n+d(G)-1}(S/J_{G_w})\neq 0$ and $\beta_{n+1,n+d(G)}(S/((x_w,y_w)+J_{G_w\setminus w}))\neq 0$ as $d(G)=d(G_w)+1=d(G_w\setminus w)+1$. Also, $G\setminus w$ is a disconnected graph such that each component is  either a complete graph or belongs to the Hibi-Madani class (if either $u_{j+1}$ or $v'$ is a simplicial vertex in $G \setminus w$) or belongs to the class $\mathcal{D} \setminus \mathcal{D}_1$ (if both $u_{j+1}$ and $v'$ are internal vertices in $G \setminus w$). Hence $\depth(S/((x_w,y_w)+J_{G\setminus w})) \geq n+1$ which implies that $\pd(S/((x_w,y_w)+J_{G\setminus w}))\leq n-1$. Therefore, it follows from  \eqref{ohtani-tor} that
%\begin{equation}
%0 \longrightarrow \Tor_{n+1}^{S}\left(\frac{S}{(x_w,y_w)+J_{G_w\setminus w}},\K\right)_{n+d(G)} \longrightarrow \Tor_{n}^{S}\left( \frac{S}{J_G},\K\right)_{n+d(G)}\longrightarrow \cdots
%\end{equation}
$\beta_{n,n+d(G)}(S/J_G)\neq 0$.
\end{proof}
Now we assume that $\kappa(G) \geq 2$. By \Cref{charac-thm}, $2\leq d(G) \leq 3.$ First we assume that $d(G) = 2$. By \Cref{induced-cycle-charac}, $G$ is chordal. Moreover, any two vertices of $G$ are either adjacent or have a common neighbor. Note that $\iv(G) = \kappa+1$. Since $G$ is chordal with $\kappa(G) = \kappa$, it is a repeated clique sum of complete graphs along complete subgraphs of size $\kappa$ or ${\kappa+1}$.  Observe that any maximal clique in $G$ has at least $\kappa+1$ vertices, and $\kappa$ of those vertices must be internal vertices.

%Hence $G$ can be obtained in two ways: is a clique-sum of complete graphs along a complete subgraph on $\kappa$ vertices of a $K_{\kappa+1}$ with the condition that all vertices of $K_{\kappa+1}$ is contained in at least two cliques.

%Since $d=2$ and $\iv(G)=\kappa+1$, $G$ is obtained by the following process:

%Let $K_r$ be a complete graph with $r \geq \kappa+1$. Let $K$ denote a fixed complete subgraph of $K_r$ on $\kappa+1$ vertices. Then $G$ is obtained by attaching complete graphs along $K$ or any of the complete subgraphs of $K$ on $\kappa$ vertices. Note that there exists at least one complete graph attached to $K$ along a complete graph $K'$ on $\kappa$ vertices (since $\kappa(G)=\kappa$) and another one attached to $K$ or along another complete subgraph $K''$ of $K$ on $\kappa$ vertices (since $\iv(G)=\kappa+1$).

%Let $K$ be a clique on $\kappa$ vertices and $K'$ be clique on $r$ vertices such that if $r = \kappa$, then $\mid V(K) \cap V(K') \mid = \kappa -1$ and if $r = \kappa+1$, then $K \subset K'$. Then
%\[
%G=K_{r_1}\cup_{K}\cdots \cup_{K}K_{r_l}\cup_{K'}K_{s_1}\cup_{K'}\cdots \cup_{K'}K_{s_m}.
%\]
%In this case, we prove that the depth of $S/J_G$ attains its upper bound.
\begin{minipage}{\linewidth}
		\begin{minipage}{.5\linewidth}
		
		\captionsetup[figure]{labelformat=empty}
\begin{figure}[H]
				\begin{tikzpicture}[scale=1]
\draw (0,1)-- (1.3,1);
\draw (1.3,1)-- (1.3,0);
\draw (0,0)-- (0,1);
\draw (0,0)-- (1.3,0);
\draw (1.3,0)-- (0,1);
\draw (0,0)-- (1.3,1);
\draw (1.3,1)-- (2.6,1);
\draw (2.6,0)-- (1.3,0);
\draw (2.6,1)-- (2.6,0);
\draw (1.3,1)-- (2.6,0);
\begin{scriptsize}
\fill  (0,0) circle (1.5pt);
%\draw (0.16,0.26) node {$A$};
\fill  (1.3,0) circle (1.5pt);
%\draw (1.46,0.26) node {$B$};
\fill  (2.6,0) circle (1.5pt);
%\draw (2.76,0.26) node {$C$};
\fill  (1.3,1) circle (1.5pt);
\draw (1.46,1.26) node {$w$};
\fill  (0,1) circle (1.5pt);
%\draw (0.16,1.26) node {$E$};
\fill (2.6,1) circle (1.5pt);
%\draw (2.74,1.26) node {$F$};
\end{scriptsize}
\end{tikzpicture}
\caption{$G_7$}
\end{figure}
		\end{minipage}
		\begin{minipage}{.5\linewidth}
		\captionsetup[figure]{labelformat=empty}
\begin{figure}[H]
				\begin{tikzpicture}[scale=1]
\draw (1.3,1)-- (2.6,0);
\draw (2.6,1)-- (2.6,0);
\draw (1.3,0)-- (1.3,1);
\draw (1.3,1)-- (2.6,1);
\draw (1.3,0)-- (2.6,0);
\draw (1.3,0)-- (2.6,1);
\draw (1.3,2)-- (1.3,1);
\draw (1.3,2)-- (2.6,1);
\draw (2.6,2)-- (2.6,1);
\draw (2.6,2)-- (1.3,1);
\draw (0,0.5)-- (1.3,1);
\draw (0,0.5)-- (1.3,0);
\begin{scriptsize}
\fill  (1.3,0) circle (1.5pt);
\draw (1.46,-0.2) node {$w$};
\fill  (2.6,0) circle (1.5pt);
%\draw (2.76,0.26) node {$C$};
\fill  (1.3,1) circle (1.5pt);
%\draw (1.46,1.26) node {$D$};
\fill (2.6,1) circle (1.5pt);
%\draw (2.74,1.26) node {$F$};
\fill  (1.3,2) circle (1.5pt);
%\draw (1.46,2.26) node {$E$};
\fill  (2.6,2) circle (1.5pt);
%\draw (2.76,2.26) node {$G$};
\fill  (0,0.5) circle (1.5pt);
%\draw (0.16,0.76) node {$H$};
\end{scriptsize}
\end{tikzpicture}
\caption{$G_8$}
\end{figure}
		\end{minipage}
	\end{minipage}

	\begin{minipage}{\linewidth}
		\begin{minipage}{.5\linewidth}
		
		\captionsetup[figure]{labelformat=empty}
\begin{figure}[H]
				\begin{tikzpicture}[scale=1.2]
\draw (0,1)-- (1.3,1);
\draw (1.3,1)-- (1.3,0);
\draw (0,0)-- (0,1);
\draw (0,0)-- (1.3,0);
\draw (1.3,0)-- (0,1);
\draw (0,0)-- (1.3,1);
\draw (1.3,1)-- (2.6,0.7);
\draw (2.6,0.3)-- (1.3,0);
\draw (2.6,0.7)-- (2.6,0.3);
\draw (1.3,1)-- (2.6,0.3);
\draw (2.6,0.3)-- (2.6,0.7);
\draw [color=red](0,1)-- (2.6,0.7);
\draw [color=red](2.6,0.7)-- (0,0);
\draw [color=red](1.3,0)-- (2.6,0.7);
\draw [color=red](2.6,0.3)-- (0,1);
\draw [color=red](2.6,0.3)-- (0,0);
\begin{scriptsize}
\fill  (0,0) circle (1.5pt);
%\draw (0.16,0.26) node {$A$};
\fill  (1.3,0) circle (1.5pt);
%\draw (1.46,0.26) node {$B$};
\fill  (2.6,0.3) circle (1.5pt);
%\draw (2.76,0.56) node {$C$};
\fill  (1.3,1) circle (1.5pt);
\draw (1.46,1.26) node {$w$};
\fill  (0,1) circle (1.5pt);
%\draw (0.16,1.26) node {$E$};
\fill  (2.6,0.7) circle (1.5pt);
%\draw (2.74,0.96) node {$F$};
\end{scriptsize}
\end{tikzpicture}
\caption{$(G_7)_{w}$}
\end{figure}
		\end{minipage}
		\begin{minipage}{.5\linewidth}
		\captionsetup[figure]{labelformat=empty}
\begin{figure}[H]
				\begin{tikzpicture}[scale=1.2]
\draw (0,1)-- (0,0);
\draw (0,1)-- (1.3,0);
\draw (0,0)-- (1.3,0);
\draw (1.3,0)-- (2.6,0);
\draw (2.6,0)-- (3.9,0);
\begin{scriptsize}
\fill  (0,0) circle (1.5pt);
%\draw (0.16,0.26) node {$A$};
\fill  (1.3,0) circle (1.5pt);
%\draw (1.46,0.26) node {$B$};
\fill  (2.6,0) circle (1.5pt);
%\draw (2.76,0.26) node {$C$};
\fill  (0,1) circle (1.5pt);
%\draw (0.16,1.26) node {$E$};
\fill (3.9,0) circle (1.5pt);
%\draw (4.06,0.26) node {$D$};
\end{scriptsize}
\end{tikzpicture}
\caption{$G_7\setminus w$}
\end{figure}
		\end{minipage}
	\end{minipage}

	\begin{minipage}{\linewidth}
		\begin{minipage}{.5\linewidth}
		
		\captionsetup[figure]{labelformat=empty}
\begin{figure}[H]
				\begin{tikzpicture}[scale=1]
\draw (1.3,1)-- (2.6,0);
\draw (2.6,1)-- (2.6,0);
\draw (1.3,0)-- (1.3,1);
\draw (1.3,1)-- (2.6,1);
\draw (1.3,0)-- (2.6,0);
\draw (1.3,0)-- (2.6,1);
\draw (1.3,2)-- (1.3,1);
\draw (1.3,2)-- (2.6,1);
\draw (2.6,2)-- (2.6,1);
\draw (2.6,2)-- (1.3,1);
\draw (0,0.5)-- (1.3,1);
\draw (0,0.5)-- (1.3,0);
\draw [color=red](0,0.5)-- (2.6,1);
\draw [color=red](0,0.5)-- (2.6,0);
\begin{scriptsize}
\fill  (1.3,0) circle (1.5pt);
\draw (1.46,-0.2) node {$w$};
\fill  (2.6,0) circle (1.5pt);
%\draw (2.76,0.26) node {$C$};
\fill  (1.3,1) circle (1.5pt);
%\draw (1.46,1.26) node {$D$};
\fill (2.6,1) circle (1.5pt);
%\draw (2.74,1.26) node {$F$};
\fill  (1.3,2) circle (1.5pt);
%\draw (1.46,2.26) node {$E$};
\fill  (2.6,2) circle (1.5pt);
%\draw (2.76,2.26) node {$G$};
\fill  (0,0.5) circle (1.5pt);
%\draw (0.16,0.76) node {$H$};
\end{scriptsize}
\end{tikzpicture}
\caption{$(G_8)_{w}$}
\end{figure}
		\end{minipage}
		\begin{minipage}{.5\linewidth}
		\captionsetup[figure]{labelformat=empty}
\begin{figure}[H]
				\begin{tikzpicture}[scale=1]
\draw (1.3,1)-- (2.6,0);
\draw (2.6,1)-- (2.6,0);
\draw (1.3,1)-- (2.6,1);
\draw (1.3,2)-- (1.3,1);
\draw (1.3,2)-- (2.6,1);
\draw (2.6,2)-- (2.6,1);
\draw (2.6,2)-- (1.3,1);
\draw (0,1)-- (1.3,1);
\begin{scriptsize}
\fill  (2.6,0) circle (1.5pt);
%\draw (2.76,0.26) node {$C$};
\fill (1.3,1) circle (1.5pt);
%\draw (1.46,1.26) node {$D$};
\fill (2.6,1) circle (1.5pt);
%\draw (2.74,1.26) node {$F$};
\fill  (1.3,2) circle (1.5pt);
%\draw (1.46,2.26) node {$E$};
\fill  (2.6,2) circle (1.5pt);
%\draw (2.76,2.26) node {$G$};
\fill (0,1) circle (1.5pt);
%\draw (0.16,1.26) node {$A$};
\end{scriptsize}
\end{tikzpicture}
\caption{$G_8\setminus w$}
\end{figure}
		\end{minipage}
	\end{minipage}

\vskip 2mm \noindent
\begin{theorem}\label{claim5}
Let $G$ be a graph with $d(G)=2$ and $\kappa(G) \geq 2$. Then
$\depth(S/J_G)=n+2-\kappa.$
\end{theorem}
\begin{proof}
It is enough to prove that $\depth(S/J_G)\geq n+2-\kappa$. Let $w$ be an internal vertex. %Since $w$ is an internal vertex, it follows from \Cref{ohtani-lemma} that $J_G=J_{G_w}\cap ((x_w,y_w)+J_{G\setminus w})$.
If $w$ belongs to the intersection of all maximal cliques in $G$, then $J_{G_w}$ and $J_{G_w \setminus w}$ are complete graphs.  Hence  $\depth(S/J_{G_w})=n+1$ and $\depth(S/((x_w,y_w)+J_{G_w\setminus w}))=n$.
If there exists a maximal clique in $G$ not containing $w$, then $G_w$ is a clique sum of complete graphs along a fixed $K_\kappa$. Hence $G_w$ belongs to the Hibi-Madani class $\mathcal{F}_{\kappa}$ defined in \cite{Hibi-Madani}. Thus $G_w \setminus w$ is also in $\mathcal{F}_{\kappa}$.
Hence by \cite[Corollary 2.5]{Hibi-Madani}, $\depth(S/J_{G_w}) = n+2-\kappa$ and $\depth(S/((x_w,y_w)+J_{G_w\setminus w})) = n+1-\kappa.$

We now study $G\setminus w$.  Choose the vertex $w$ such that $G\setminus \{w=w_1,\ldots,w_\kappa\}$ is disconnected. We prove the assertion by induction on $\kappa$. Let $\kappa=2$. Hence $\iv(G) = 3$. Hence $G$ is obtained by taking clique sum of complete graphs along a fixed $K_3$ or its edges. Therefore, $G\setminus w$ is a clique sum of complete graphs along a fixed $K_2$ or vertices of the $K_2$. Since $\iv(G) = 3$ and $G\setminus w$ is not a complete graph, $1\leq \iv(G \setminus w) \leq  2$ and $d(G\setminus w) > 1$. We now observe that $d(G\setminus w) \leq 3$. Suppose $d(G\setminus w) \geq 4$, then there exists an induced path of length at least $4$. This implies that $\iv(G\setminus w) \geq 3$, which is a contradiction. Therefore, $2 \leq d(G\setminus w) \leq 3$. Suppose $\iv(G\setminus w)=2$ and $d(G\setminus w)=2$. Then $d(G\setminus w) + f(G \setminus w) + 1 = n = (n-1) + 2 - \kappa(G \setminus w)$. If $G\setminus w \in  \mathcal{D}_1$, then, following the notation set in the definition, $j=0$ with $u_{j+2} = v$. Moreover, $\kappa(G\setminus w) = 1$ and there are no cliques attached either to $v'$ or $u_{j+1}$, which are the only internal vertices. This contradicts the assumption that $\kappa(G  \setminus w) = 1$. Hence $G \setminus w \in \mathcal{D} \setminus \mathcal{D}_1$, and so $\depth (S/((x_w,y_w)+J_{G \setminus w})) = n$. If $\iv(G\setminus w)=1$ and $d(G\setminus w) =  2$, then $d(G\setminus w) + f(G\setminus w) = n = (n-1) + 2 - \kappa(G \setminus w)$. Suppose  $d(G\setminus w)=3$. This implies  that $\iv(G\setminus w)=2$ and hence $d(G\setminus w) + f(G\setminus w) = n $. 
This implies that $G\setminus w$ belongs to the Hibi-Madani class.
Hence, by \cite[Corollary 2.5]{Hibi-Madani}, $\depth(S/((x_w,y_w)+J_{G\setminus w}))=n$. Thus \cite[Proposition 1.2.9]{bh} applied to the short exact sequence \eqref{ohtani-ses} yields $\depth(S/J_G)\geq n$.

Assume that $\kappa\geq 3$. Then by the chosen $w$, we have $\kappa(G\setminus w) = \kappa -1$ and $\iv(G\setminus w) = \kappa$. Now we show that $d(G\setminus w) = 2$. Assume that $d(G\setminus w)=d'\geq 3$. Let $z,z'\in V(G\setminus w)$ such that $\dist_{G\setminus w}(z,z')=d'$. By \cite[Chapter III, Corollary 6]{Bollobas}, there exist $\kappa -1$ vertex-disjoint paths of length $\geq d'$ from $z$ to $z'$ in $G\setminus w$. Each such path gives at least $(d'-1)$ internal vertices. This implies that $\iv(G \setminus w) \geq (d'-1)(\kappa-1) > \kappa$, a contradiction. Therefore, $d(G\setminus w) = 2$.  Since $\kappa(G\setminus w) = \kappa-1$ by induction, $\depth(S/((x_w,y_w)+J_{G\setminus w}))\geq (n-1)+2-\kappa(G\setminus w)=n+2-\kappa$. Hence, we obtain the required assertion by applying \cite[Proposition 1.2.9]{bh} to \eqref{ohtani-ses}.
\end{proof}

Now we move on to the remaining part, i.e., $\kappa(G)=2$ and $d(G)=3$. Then $\iv(G)=4$. Since $\kappa(G)=2$, there exist two vertex disjoint paths joining $u$ and $v$. Let $P: u, u_1, u_2, v$ and $P':u,  v_1, v_2, v$ be disjoint paths in $G$ joining $u$ and $v$. Note that $u_1, u_2, v_1, v_2$ are the internal vertices of $G$. Since $u$ is a simplicial vertex, there exists a unique clique containing the edges $\{u,u_1\}$  and $\{u,v_1\}$. Similarly there exists a unique clique containing $\{u_2,v\}$ and $\{v_2,v\}$. Let $e_1 = \{u_1, v_1\}, e_2 = \{v_1, v_2\}, e_3=\{u_1, u_2\}$ and $e_4 = \{v_2, u_2\}$. %For $1 \leq i \leq 4$, let $G_i = K_{r_{i_1}} \cup_{e_i} \cdots \cup_{e_i} K_{r_{i_m}}$, with $u \in V(G_1)$ and $v \in V(G_4)$.
Consider the following graph $H''$:

\captionsetup[figure]{labelformat=empty}
\begin{figure}[H]
				\begin{tikzpicture}[scale=1]
				
				\draw (0,1.02)-- (1.36,1.02);
\draw (1.36,1.02)-- (1.36,0);
\draw (0,1.02)-- (0,0);
\draw (0,0)-- (1.36,0);
\draw (0,0)-- (1.36,1.02);
\draw (0,1.02)-- (1.36,0);
\draw (1.36,0)-- (2.26,0.5);
\draw (1.36,1.02)-- (2.26,0.5);
%\draw (1.36,0)-- (2.58,1.02);
%\draw (1.36,1.02)-- (2.58,1.02);
%\draw (0.68,1.96)-- (0,1.02);
%\draw (0.68,1.96)-- (1.36,1.02);
%\draw (0.68,1.96)-- (0,0);
\draw (-0.9,0.5)-- (0,0);
\draw (-0.9,0.5)-- (0,1.02);
\begin{scriptsize}
\fill  (0,0) circle (1.5pt);
\draw (0.16,-0.2) node {$v_1$};
\fill  (1.36,0) circle (1.5pt);
\draw (1.52,-0.2) node {$v_2$};
\fill  (2.26,0.5) circle (1.5pt);
\draw (2.36,0.2) node {$v$};
\fill (0,1.02) circle (1.5pt);
\draw (-0.1,1.28) node {$u_1$};
\fill  (1.36,1.02) circle (1.5pt);
\draw (1.5,1.28) node {$u_2$};
%\fill  (2.58,1.02) circle (2.5pt);
%\draw (2.74,1.28) node {$G$};
%\fill  (0.68,1.96) circle (2.5pt);
%\draw (0.84,2.22) node {$H$};
\fill  (-0.9,0.5) circle (1.5pt);
\draw (-0.8,0.2) node {$u$};
\draw (-0.2,0.5) node {$e_1$};
\draw (0.7,-0.2) node {$e_2$};
\draw (0.7,1.28) node {$e_3$};
\draw (1.56,0.5) node {$e_4$};
\end{scriptsize}
			
			\end{tikzpicture}
\caption{$H''$}
\end{figure}

%\subsection{}\label{chord} 
Now we assume that $G$ is chordal. Then the cycle on the vertices $\{u_1,u_2,v_2,v_1\}$ is not an induced cycle, i.e., $\{u_1,v_2\} \in E(G)$ or $\{u_2,v_1\} \in E(G)$. Then $G$ is a clique sum of complete graphs along cliques of size at least $2$ of the induced subgraph $G[u_1,u_2,v_1,v_2]$. Moreover, there are unique cliques containing $\{u,u_1,v_1\}$ and $\{v,u_2,v_2\}$.

%Let $K_r$ be a complete graph such that $\{u_1, u_2, v_1, v_2\} \subseteq V(K_r)$. Then
%\[
%G = K_r \cup_{e_1}G_1 \cup_{e_2}G_2 \cup_{e_3}G_3 \cup_{e_4}G_4.
%\]
%We show that if $G_2 \cong K_2$ and $G_3 \cong K_2$, then $\depth(S/J_G)=n-1$, otherwise $\depth(S/J_G)=n$. First suppose $G_2 \cong K_2 \cong G_3$ .

\begin{minipage}{\linewidth}
		\begin{minipage}{.5\linewidth}
		
		\captionsetup[figure]{labelformat=empty}
\begin{figure}[H]
				\begin{tikzpicture}[scale=1]
				
				\draw (0,1.02)-- (1.36,1.02);
\draw (1.36,1.02)-- (1.36,0);
\draw (0,1.02)-- (0,0);
\draw (0,0)-- (1.36,0);
\draw (0,0)-- (1.36,1.02);
\draw (0,1.02)-- (1.36,0);
\draw (1.36,0)-- (2.58,0);
\draw (1.36,0)-- (2.58,0);
\draw (1.36,0)-- (2.58,0);
\draw (1.36,1.02)-- (2.58,0);
\draw (1.36,0)-- (2.58,1.02);
\draw (1.36,1.02)-- (2.58,1.02);
\draw (0.68,1.96)-- (0,1.02);
\draw (0.68,1.96)-- (1.36,1.02);
\draw (0.68,1.96)-- (0,0);
\draw (-0.9,0.5)-- (0,0);
\draw (-0.9,0.5)-- (0,1.02);
\begin{scriptsize}
\fill  (0,0) circle (1.5pt);
\draw (0.16,-0.2) node {$v_1$};
\fill  (1.36,0) circle (1.5pt);
\draw (1.52,-0.2) node {$v_2$};
\fill  (2.58,0) circle (1.5pt);
\draw (2.74,0.26) node {$v$};
\fill (0,1.02) circle (1.5pt);
\draw (-0.1,1.28) node {$u_1$};
\fill  (1.36,1.02) circle (1.5pt);
\draw (1.5,1.28) node {$u_2$};
\fill  (2.58,1.02) circle (1.5pt);
%\draw (2.74,1.28) node {$G$};
\fill  (0.68,1.96) circle (1.5pt);
%\draw (0.84,2.22) node {$H$};
\fill  (-0.9,0.5) circle (1.5pt);
\draw (-0.8,0.2) node {$u$};
\draw (-0.2,0.5) node {$e_1$};
\draw (0.7,-0.2) node {$e_2$};
\draw (0.7,1.28) node {$e_3$};
\draw (1.56,0.5) node {$e_4$};
\end{scriptsize}
			
			\end{tikzpicture}
\caption{$G_9$}
\end{figure}
		\end{minipage}
		\begin{minipage}{.5\linewidth}
		\captionsetup[figure]{labelformat=empty}
\begin{figure}[H]
				\begin{tikzpicture}[scale=1]	
				
				\draw (0,1.02)-- (1.36,1.02);
\draw (1.36,1.02)-- (1.36,0);
\draw (0,1.02)-- (0,0);
\draw (0,0)-- (1.36,0);
\draw (0,0)-- (1.36,1.02);
\draw (0,1.02)-- (1.36,0);
\draw (1.36,0)-- (2.58,0);
\draw (1.36,0)-- (2.58,0);
\draw (1.36,0)-- (2.58,0);
\draw (1.36,1.02)-- (2.58,0);
\draw (1.36,0)-- (2.58,1.02);
\draw (1.36,1.02)-- (2.58,1.02);
\draw (0.68,1.96)-- (0,1.02);
\draw (0.68,1.96)-- (1.36,1.02);
\draw (0.68,1.96)-- (0,0);
\draw (-0.9,0.5)-- (0,0);
\draw (-0.9,0.5)-- (0,1.02);
\draw [color=red](0.68,1.96)-- (-0.9,0.5);
\draw [color=red](-0.9,0.5)-- (1.36,1.02);
\draw [color=red](-0.9,0.5)-- (1.36,0);
\draw [color=red](0.68,1.96)-- (1.36,0);
\begin{scriptsize}
\fill  (0,0) circle (1.5pt);
\draw (0.16,-0.2) node {$v_1$};
\fill  (1.36,0) circle (1.5pt);
\draw (1.52,-0.2) node {$v_2$};
\fill  (2.58,0) circle (1.5pt);
\draw (2.74,0.26) node {$v$};
\fill  (0,1.02) circle (1.5pt);
\draw (-0.35,1.28) node {$u_1$};
\fill  (1.36,1.02) circle (1.5pt);
\draw (1.5,1.28) node {$u_2$};
\fill  (2.58,1.02) circle (1.5pt);
%\draw (2.74,1.28) node {$G$};
\fill  (0.68,1.96) circle (1.5pt);
%\draw (0.84,2.22) node {$H$};
\fill  (-0.9,0.5) circle (1.5pt);
\draw (-0.8,0.2) node {$u$};
\end{scriptsize}
				
				\end{tikzpicture}
				\caption{$(G_9)_{u_1}$}
\end{figure}
		\end{minipage}
	\end{minipage}

\begin{theorem}\label{claim6}
Let $G$ be a chordal graph $\kappa(G)=2$, $d(G)=3$ and $H''$ is an induced subgraph of $G$. If there are no cliques in $G$ along the edges $e_2$ and $e_3$ of $H''$, then $\depth(S/J_G)=n-1.$
\end{theorem}
\begin{proof}
First observe that $d(G)+f(G) = n-1 \leq \depth S/J_G$. For the reverse inequality,
we prove that $\beta_{n+1,n+4}(S/J_G)\neq 0$. Let $w=u_1$. Then, $G_w$ is a clique sum of complete graphs along the edge $e_4$. Therefore, $G_w$ and $G_w\setminus w$ belong to the Hibi-Madani class $\mathcal{F}_2$. Therefore, by  \Cref{extremal-hibi-madani} and \eqref{Bettiproduct}, $\beta_{n,n+2}(S/J_{G_w})$ and $\beta_{n+1,n+3}(S/((x_w,y_w)+J_{G_w\setminus w}))$ are extremal Betti numbers. 
%Hence, from \eqref{ohtani-tor}, it follows that
%\begin{equation*}
%\Tor_{n+1}^{S}\left( \frac{S}{J_G},\K\right)_{n+4}\simeq \Tor_{n+1}^{S}\left( \frac{S}{(x_w,y_w)+J_{G\setminus w}},\K\right)_{n+4}.
%\end{equation*}
We now describe the graph $G \setminus w$. The unique clique containing $\{u,v_1\}$ is attached to the rest of the graph along the vertex $v_1$. Since $\kappa(G) = 2$ and there are no cliques attached to $e_2$ and $e_3$ in $G$, it can be seen that there are no cliques attached to $v_2$ and $u_2$ in $G\setminus w$. Hence, in $G\setminus w$, complete graphs are attached along the complete subgraphs of $G[u_2,v_1,v_2]$, except along the vertices $u_2,v_2$ and the edge $e_2$. Moreover, $u_2,v_1,v_2$ are the only internal vertices in $G \setminus w$. Hence, $d(G\setminus w) = 3.$ Since $N_{G\setminus w}(u_2) = \{v_1,v_2,v\}$ and there are no cliques along the vertices $u_2$ and $v_2$,  $G\setminus w\in \mathcal{D}_1$. Therefore, by the proof of \Cref{claim2} and \eqref{Bettiproduct}, $\beta_{n+1,n+4}(S/((x_w,y_w)+J_{G\setminus w}))\neq 0$. 
Now it follows from \eqref{ohtani-tor} that $\beta_{n+1,n+4}(S/J_G)\neq 0$.
\end{proof}

\begin{minipage}{\linewidth}
		\begin{minipage}{.5\linewidth}
		
		\captionsetup[figure]{labelformat=empty}
\begin{figure}[H]
				\begin{tikzpicture}[scale=1]
				
				\draw (0,1.02)-- (1.36,1.02);
\draw (1.36,1.02)-- (1.36,0);
\draw (0,1.02)-- (0,0);
\draw (0,0)-- (1.36,0);
\draw (0,1.02)-- (1.36,0);
\draw (0,1.02)-- (-0.9,0.5);
\draw (-0.9,0.5)-- (0,0);
\draw (0,0)-- (0.64,-0.66);
\draw (0.64,-0.66)-- (1.36,0);
\draw (1.36,1.02)-- (2.22,0.62);
\draw (2.22,0.62)-- (1.36,0);
\begin{scriptsize}
\fill  (0,0) circle (1.5pt);
\draw (0.16,0.26) node {$v_1$};
\fill  (1.36,0) circle (1.5pt);
\draw (1.52,0.26) node {$v_2$};
\fill (0,1.02) circle (1.5pt);
\draw (0.16,1.28) node {$u_1$};
\fill  (1.36,1.02) circle (1.5pt);
\draw (1.5,1.28) node {$u_2$};
\fill (-0.9,0.5) circle (1.5pt);
\draw (-0.8,0.2) node {$u$};
\fill (0.64,-0.66) circle (1.5pt);
%\draw (0.8,-0.4) node {$H$};
\fill  (2.22,0.62) circle (1.5pt);
\draw (2.38,0.88) node {$v$};
\end{scriptsize}
			
			\end{tikzpicture}
\caption{$G_{10}$}
\end{figure}
		\end{minipage}
		\begin{minipage}{.5\linewidth}
		\captionsetup[figure]{labelformat=empty}
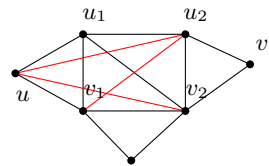
\begin{figure}[H]
				\begin{tikzpicture}[scale=1]	
				
				\draw (0,1.02)-- (1.36,1.02);
\draw (1.36,1.02)-- (1.36,0);
\draw (0,1.02)-- (0,0);
\draw (0,0)-- (1.36,0);
\draw (0,1.02)-- (1.36,0);
\draw (0,1.02)-- (-0.9,0.5);
\draw (-0.9,0.5)-- (0,0);
\draw (0,0)-- (0.64,-0.66);
\draw (0.64,-0.66)-- (1.36,0);
\draw (1.36,1.02)-- (2.22,0.62);
\draw (2.22,0.62)-- (1.36,0);
\draw [color=red](-0.9,0.5)-- (1.36,0);
\draw [color=red](-0.9,0.5)-- (1.36,1.02);
\draw [color=red](1.36,1.02)-- (0,0);
\begin{scriptsize}
\fill (0,0) circle (1.5pt);
\draw (0.16,0.26) node {$v_1$};
\fill  (1.36,0) circle (1.5pt);
\draw (1.52,0.26) node {$v_2$};
\fill  (0,1.02) circle (1.5pt);
\draw (0.16,1.28) node {$u_1$};
\fill  (1.36,1.02) circle (1.5pt);
\draw (1.5,1.28) node {$u_2$};
\fill  (-0.9,0.5) circle (1.5pt);
\draw (-0.8,0.2) node {$u$};
\fill  (0.64,-0.66) circle (1.5pt);
%\draw (0.8,-0.4) node {$H$};
\fill  (2.22,0.62) circle (1.5pt);
\draw (2.38,0.88) node {$v$};
\end{scriptsize}
				
				\end{tikzpicture}
				\caption{$(G_{10})_{u_1}$}
\end{figure}
		\end{minipage}
	\end{minipage}

\vskip 2mm \noindent
\begin{theorem}\label{claim7}
Let $G$ be a chordal graph with $\kappa(G)=2$ and $d(G)=3$. Assume that either of the following statements hold:
\begin{enumerate}
    \item $G[u_1,u_2,v_1,v_2]$ is not a complete graph;
    \item $H''$ is an induced subgraph of $G$ and there are cliques in $G$ along the edges $e_2$ or $e_3$.
\end{enumerate}
Then $\depth(S/J_G)=n.$
\end{theorem}
\begin{proof}
We show that $\depth(S/J_G)\geq n$. 
%First, assume that only one among $H_2$ and $H_3$ contains a clique of size $r \geq 3$. Suppose $H_3$ contains a clique of size  $r\geq  3$. 
First assume that $G[u_1,u_2,v_1,v_2]$ is not a complete graph. Assume that $\{u_1,v_2\} \in E(G)$ and $\{u_2,v_1\} \notin E(G)$. Take $w=u_1$. If there are cliques along $e_2$, then $G_w$ is a graph obtained by taking clique sum of the complete graph on $N_{G}[w]$ and complete graphs along $e_2$ and $e_4$. If $d(G_w) \geq 3$, then there exist two disjoint paths of length at least $d(G_w)$ connecting any two vertices $z$ and $z'$ such that $\dist_{G_w}(z, z') = d(G_w)$. This gives rise to at least $4$ internal vertices. This contradicts the fact that $\iv(G_w)\leq 3$. Therefore, $\kappa(G_w) = 2 = d(G_w)$. Since $f(G_w)\geq f(G)+1$, we can see that $n \leq d(G_w)+f(G_w) + 1 \leq n+2-\kappa(G_w)=n$. It may also be noted that $G_w \setminus w$ also satisfy the same property as $G_w$. Hence by \Cref{claim5}, $\depth (S/J_{G_w}) = n$ and $\depth (S/((x_w,y_w)+J_{G_w \setminus w}))=n-1$. If there are no cliques along $e_2$, then $G_w$ is a clique sum of the complete graph on $N_G[w]$ and complete graphs along $e_4$. Then $G_w$ and $G_w \setminus w$ belong to the Hibi-Madani class $\mathcal{F}_2$. Hence by \cite[Corollary 2.5]{Hibi-Madani}, we get $\depth (S/J_{G_w}) = n$ and $\depth (S/((x_w,y_w)+J_{G_w \setminus w})) = n-1.$
Now we study the graph $G \setminus w$. Suppose, in $G$, there are cliques along  $e_3$ or at least one clique other than the clique containing $\{u_2,v_2,v\}$ along $e_4$ or cliques along $G[u_1,u_2,v_2]$. Then $u_2$ remains an internal vertex in $G \setminus w$. By taking the path $P': u,v_1,v_2,v$, we can see that $N_{P'}(u_2) = \{v_2,v\}$. Therefore, $G\setminus w \in \mathcal{D}\setminus \mathcal{D}_1$, so that $\depth (S/((x_w,y_w)+J_{G\setminus w})) = n$, by \Cref{claim1}. If there are no cliques along $e_3$, no cliques other than the clique containing $\{u_2,v_2,v\}$ along $e_4$ and no cliques along $G[u_1,u_2,v_2]$, then $u_2$ becomes a simplicial vertex in $G\setminus w$. Therefore, $G\setminus w$ belongs to the Hibi-Madani class $\mathcal{G}_3$. Hence by \cite[Corollary 2.5]{Hibi-Madani}, $\depth (S/((x_w,y_w)+J_{G\setminus w})) = n$. Therefore, if $G[u_1,u_2,v_1,v_2]$ is not a complete graph, then it follows from \cite[Propsotion 1.2.9]{bh} and \eqref{ohtani-ses} that $\depth (S/J_G) \geq n$.

Now we assume that $G[u_1,u_2,v_1,v_2]$ is a complete graph and cliques are attached along $e_2$ or $e_3$. If there are cliques along the edge $e_3$, then take $w=u_1$, else take $w=v_1$. If there are no cliques along $e_2$, then $G_w$ is a clique sum of complete graphs along $e_4$. Hence $G_w$ and $G_w\setminus w$ belong to the Hibi-Madani class $\mathcal{F}_2$. If there are cliques along $e_2$, then $G_w$ is obtained by taking clique sum of the complete graph on $N_G[w]$ and complete graphs along $e_2$ and $e_4$. Hence $G_w$ and $G_w\setminus w$ have the property that $d(G_w) = d(G_w \setminus w) = \kappa(G_w) = \kappa(G_w \setminus w) = 2.$ Moreover, as seen in the earlier paragraph, we can see that $d(G_w) + f(G_w) +1 = n$. Therefore, by either \cite[Corollary 2.5]{Hibi-Madani} or \Cref{claim5}, we get $\depth (S/J_{G_w}) = n$ and $\depth (S/((x_w,y_w)+J_{G_w\setminus w}))=n-1.$ If $w = u_1,$ then $G\setminus w \in \mathcal{D}\setminus \mathcal{D}_1$ with the path $P':u,v_1,v_2,v$ and $u_2$ being an internal vertex. If $w = v_1$, then again $G \setminus w \in \mathcal{D}\setminus \mathcal{D}_1$ with the path $P'':u,u_1,u_2,v$ and $v_2$ being an internal vertex. Hence by \Cref{claim1}, $\depth (S/((x_w,y_w)+J_{G\setminus w})) = n$.
Hence by applying \cite[Proposition 1.2.9]{bh} to the short exact sequence \eqref{ohtani-ses}, we get $\depth (S/J_G) \geq n$.
\end{proof}

Now assume that $G$ is not chordal. Then $G$ contains precisely one induced cycle of length $4$. Since $\iv(G) = 4$, the only possibility of a $C_4$ in $G$ is on the vertices $\{u_1, u_2, v_1, v_2\}$. Let $C$ denote the cycle on $u_1,u_2,v_1,v_2$. Then $G$ is a repeated clique sum of complete graphs along the edges of the cycle $C$.
%Then
%\[
%G = C \cup_{e_1}G_1 \cup_{e_2}G_2 \cup_{e_3}G_3 \cup_{e_4}G_4.
%\]

\begin{minipage}{\linewidth}
		\begin{minipage}{.5\linewidth}
		
		\captionsetup[figure]{labelformat=empty}
\begin{figure}[H]
				\begin{tikzpicture}[scale=1]
				
				\draw (0,1.02)-- (1.36,1.02);
\draw (1.36,1.02)-- (1.36,0);
\draw (0,1.02)-- (0,0);
\draw (0,0)-- (1.36,0);
\draw (1.36,0)-- (2.58,0);
\draw (1.36,0)-- (2.58,0);
\draw (1.36,0)-- (2.58,0);
\draw (1.36,1.02)-- (2.58,0);
\draw (1.36,0)-- (2.58,1.02);
\draw (1.36,1.02)-- (2.58,1.02);
\draw (0.68,1.96)-- (0,1.02);
\draw (0.68,1.96)-- (1.36,1.02);
\draw (-1.36,0)-- (0,0);
\draw (-1.36,0)-- (0,1.02);
\draw (-1.36,1.02)-- (0,0);
\draw (-1.36,1.02)-- (0,1.02);
\draw (-1.36,1.02)-- (-1.36,0);
\begin{scriptsize}
\fill  (0,0) circle (1.5pt);
\draw (0.16,-0.2) node {$v_1$};
\fill  (1.36,0) circle (1.5pt);
\draw (1.52,-0.2) node {$v_2$};
\fill  (2.58,0) circle (1.5pt);
\draw (2.74,-0.2) node {$v$};
\fill (0,1.02) circle (1.5pt);
\draw (-0.1,1.28) node {$u_1$};
\fill  (1.36,1.02) circle (1.5pt);
\draw (1.5,1.28) node {$u_2$};
\fill  (2.58,1.02) circle (1.5pt);
%\draw (2.74,1.28) node {$G$};
\fill  (0.68,1.96) circle (1.5pt);
%\draw (0.84,2.22) node {$H$};
\fill  (-1.36,0) circle (1.5pt);
\draw (-1.36,-0.2) node {$u$};
\fill  (-1.36,1.02) circle (1.5pt);

\end{scriptsize}
			
			\end{tikzpicture}
\caption{$G_{11}$}
\end{figure}
		\end{minipage}
		\begin{minipage}{.5\linewidth}
		\captionsetup[figure]{labelformat=empty}
\begin{figure}[H]
				\begin{tikzpicture}[scale=1]	
				
				\draw (0,1.02)-- (1.36,1.02);
\draw (1.36,1.02)-- (1.36,0);
\draw (0,1.02)-- (0,0);
\draw (0,0)-- (1.36,0);
\draw (1.36,0)-- (2.58,0);
\draw (1.36,0)-- (2.58,0);
\draw (1.36,0)-- (2.58,0);
\draw (1.36,1.02)-- (2.58,0);
\draw (1.36,0)-- (2.58,1.02);
\draw (1.36,1.02)-- (2.58,1.02);
\draw (0.68,1.96)-- (0,1.02);
\draw (0.68,1.96)-- (1.36,1.02);
\draw (-1.36,0)-- (0,0);
\draw (-1.36,0)-- (0,1.02);
\draw (-1.36,1.5)-- (0,0);
\draw (-1.36,1.5)-- (0,1.02);
\draw (-1.36,1.5)-- (-1.36,0);
\draw [color=red] (-1.36,1.5)-- (0.68,1.96);
\draw [color=red] (-1.36,1.5)-- (1.36,1.02);
\draw [color=red] (-1.36,0)-- (0.68,1.96);
\draw [color=red] (0,0)-- (1.36,1.02);
\draw [color=red] (0,0)-- (0.68,1.96);
\draw [color=red] (-1.36,0)-- (1.36,1.02);
\begin{scriptsize}
\fill  (0,0) circle (1.5pt);
\draw (0.16,-0.2) node {$v_1$};
\fill  (1.36,0) circle (1.5pt);
\draw (1.52,-0.2) node {$v_2$};
\fill  (2.58,0) circle (1.5pt);
\draw (2.74,-0.2) node {$v$};
\fill (0,1.02) circle (1.5pt);
\draw (-0.1,1.28) node {$u_1$};
\fill  (1.36,1.02) circle (1.5pt);
\draw (1.5,1.28) node {$u_2$};
\fill  (2.58,1.02) circle (1.5pt);
%\draw (2.74,1.28) node {$G$};
\fill  (0.68,1.96) circle (1.5pt);
%\draw (0.84,2.22) node {$H$};
\fill  (-1.36,0) circle (1.5pt);
\draw (-1.36,-0.2) node {$u$};
\fill  (-1.36,1.5) circle (1.5pt);

\end{scriptsize}
				
				\end{tikzpicture}
				\caption{$(G_{11})_{u_1}$}
\end{figure}
		\end{minipage}
	\end{minipage}

\begin{theorem}\label{claim8}
Let $G$ be a non-chordal graph with $\kappa(G)=2$ and $d(G)=3$. Then
$\depth(S/J_G)=n.$
\end{theorem}
\begin{proof}
Let $w=u_1$. We first describe the graph $G_w$. Since $G$ is the clique sum of complete graphs along the edges of the cycle $C$, $G_w$ is obtained by taking clique sum of complete graphs along the edges of the $K_3$ on the vertices $\{v_1,u_2,v_2\}$. Therefore, these are the only internal vertices of $G_w$ and hence $\iv(G_w) = 3$. If $d(G_w) \geq 3$, then there exist two vertex disjoint paths of length at least $d(G_w)$ which gives rise to at least $4$ internal vertices, a contradiction. Hence $d(G_w) = 2$. Since $G_w \setminus \{u_2,v_2\}$ is disconnected, $\kappa(G_w) = 2$. And, $d(G_w)+f(G_w)+1 = n$. Moreover, $G_w\setminus w$ also satisfies the same properties. Hence \Cref{claim5}, $\depth(S/J_{G_w})=n$ and $\depth(S/((x_w,y_w)+J_{G_w\setminus w}))=n-1$.

If there is a clique along $e_3$ or at least two cliques along $e_4$, then $G \setminus w \in \mathcal{D}\setminus \mathcal{D}_1$ with the path $P':u,v_1,v_2,v$ and $u_2$ being an internal vertex outside the path $P$. If there are no cliques along $e_3$ and there is a unique clique, namely the clique containing $v$, along the edge $e_4$, then $G\setminus w$ is a repeated clique sum of complete graphs along the vertices and edges of the path $P':u,v_1,v_2,v$. Hence, $G\setminus w$ belongs to the Hibi-Madani class $\mathcal{G}_3$. Therefore, either by using \Cref{claim1} or by using \cite[Corollary 2.5]{Hibi-Madani}, we conclude that $\depth(S/((x_w,y_w)+J_{G\setminus w}))=n$. Now the assertion follows from \cite[Proposition 1.2.9]{bh} applied to the short exact sequence \eqref{ohtani-ses}.
\end{proof}

%Since $d(G)+f(G)+1 = n+2-\kappa(G)$, $\depth(S/J_G)$ is either $d(G)+f(G)$ or $d(G)+f(G)+1$. We have explicitly described when each of the equality occurs.
%
%\textbf{Remark:} In each of the above cases, we compute the depth of $S/J_G$ also.
\section{On the lower bound of depths}
It was proved by Rouzbahani Malayeri, Saeedi Madni and Kiani  that for any graph $G$, $d(G)+f(G) \leq \depth(S/J_G)$, \cite{MKM-depth}.
There is a natural question arising from this result, namely, ``\textit{What are the graphs $G$ satisfying the equality $\depth(S/J_G) = d(G)+f(G)$}?''. While it is not clear, whether one can give a complete characterization of graphs with this minimal depth, in this section, we try to understand when a graph does not have this minimal depth.

A \textit{block} of a graph is a non-trivial maximal connected subgraph which has no cut vertex. A graph $G$ is said to be a \textit{block graph} if every block of $G$ is a complete subgraph. It is easy to observe that $G$ is a block graph if and only if G is a chordal graph
with the property that any two maximal cliques intersect in at most one vertex. A
connected chordal graph $G$ is called a \textit{generalized block graph} if three maximal
cliques of $G$ intersect non-trivially, then the intersection of each pair of them is the
same.

Let $G$ be a generalized block graph on $[n]$. The
maximum size of the maximal cliques of $G$ is called the clique number of a graph $G$, denoted by $\omega(G)$. For each $i=1,\dots,\omega(G)-1$, we set
\[ \mathcal{A}_i=\{ A\subseteq [n]: |A|=i \text{ and } A \text{ is a minimal cut set of } G\}.
\]
Also, we set $a_i=|\mathcal{A}_i|.$ Note that a generalized block graph $G$ is a block graph if and only if $a_i=0$ for $i\geq 2$. Let $m(G)$ denote the number of minimal cut sets of $G$, i.e., $m(G)=\sum_{i=1}^{\omega(G)-1}a_i(G).$
\begin{remark}\label{depth-lower}
Let $G$ be a generalized block graph. Then $\depth(S/J_G)=d(G)+f(G)$ if and only if $m(G)+1=d(G).$
\end{remark}
\begin{proof}
It follows from \cite[Theorem 3.2]{KM-CA} that 
\begin{equation*}
    \depth(S/J_G)=n+1-\sum_{i=2}^{\omega(G)-1}(i-1)a_i(G) =n-\sum_{i=1}^{\omega(G)-1}ia_i(G)+1+m(G).
\end{equation*}
Note that $\iv(G)=\sum_{i=1}^{\omega(G)-1}ia_i(G)$. Hence, the assertion follows.
\end{proof}
Let $G$ be a connected chordal graph and $\Delta(G)$ be its clique complex. Then $\Delta(G)$ is a quasi-tree i.e., there exists a leaf order of the facets in $\Delta(G)$. Let $G$ be such that $F_1,\dots,F_r$ be a leaf order in $\Delta(G)$, and $F_{i-1}$ is the unique branch of $F_{i}$ for $i=2,\dots,r$. Ene, Herzog and Hibi studied this class of graphs in \cite{her1} and called it a \textit{chain of cliques}. Then we can write 
\[ G=K_{r_1}\cup_{K_{q_1}}K_{r_2}\cup_{K_{q_2}}\cdots \cup_{K_{q_{m-1}}}K_{r_m}, m\geq 2.
\]
We now characterize $G$ such that $\depth(S/J_G)=d(G)+f(G)$. If $F_{i-2}\cap F_i=\emptyset$ for all $i=3,\dots,r$, i.e., $V(K_{q_{j-1}})\cap V(K_{q_{j}})=\emptyset$ for all $j=2,\dots,r$, then $G$ is a generalized block graph with $m(G)+1=d(G)$, and hence by \Cref{depth-lower}, $\depth(S/J_G)=d(G)+f(G)$.

We now prove that if $V(K_{q_{j-1}})\cap V(K_{q_{j}})\neq \emptyset$ for some $j=2,\dots,r$, then $d(G)+f(G)+1\leq \depth(S/J_G)$. First we observe that we need to prove the result only for indecomposable graphs, i.e., $q_i > 1$ for all $i = 1, \ldots, m-1$. Assume that the result is proved for indecomposable graphs. Let $G = G_1 \cup_{w} G_2$, where $G, G_1$ and $G_2$ are chain of cliques, $G_1, G_2$ indecomposable, and $w$ is a simplicial vertex in $G_1$ and in $G_2$. Write $G = G_1 \cup_{K_{q_i}} G_2$, where $G_1 = K_{r_1}\cup_{K_{q_1}} \cdots \cup_{K_{q_{i-1}}}K_{r_i}$ and $G_2 = K_{r_{i+1}}\cup_{K_{q_{i+1}}} \cdots \cup_{K_{q_{m-1}}}K_{r_m}.$ Then by \cite[Theorem 2.7]{Rinaldo-Rauf}, $\depth (S/J_G)=\depth (S_{G_1}/J_{G_1}) + \depth (S_{G_2}/J_{G_2})-2$, where $S_{G_i}$ denotes the polynomial ring in $2|V(G_i)|$ variables. If $G$ satisfies the property that $V(K_{q_{j-1}}) \cap V(K_{q_j}) \neq \emptyset$, then it is clear that $G_1$ or $G_2$ satisfies this property. Without loss of generality, we may assume that $G_1$ satisfies this property. 
If $K_{q_i} = \{w\}$, then $w$ is an internal vertex in $G$ while it is a simplicial vertex in $G_1$ and in $G_2$. Moreover, if $v \in V(G_i) \setminus \{w\}$, then $v$ is simplicial in $G_i$ if and only if $v$ is simplicial in $G$. Therefore, $f(G) = |V(G)|-\iv(G) = (|V(G_1)|+|V(G_2)|-1) - (\iv(G_1)+\iv(G_2)+1) = f(G_1)+f(G_2)-2$.  Since any longest induced path in $G$ must pass through $w$, $d(G) = d(G_1) + d(G_2)$. Therefore,
\begin{eqnarray*}
\depth (S/J_G) & = & \depth (S_{G_1}/J_{G_1}) + \depth (S_{G_2}/J_{G_2}) -2 \\
& \geq & [d(G_1) + f(G_1) + 1] + [d(G_2) +  f(G_2) ] -2 \\
& = & d(G) + f(G) + 1.
\end{eqnarray*}
Therefore, to prove our result, we only need to prove the same for indecomposable graphs. Hence, we assume that $q_i \geq 2$ for all $i =  1, \ldots, m-1$.

\begin{minipage}{\linewidth}
	\begin{minipage}{.5\linewidth}
		
		\captionsetup[figure]{labelformat=empty}
\begin{figure}[H]
				\begin{tikzpicture}[scale=1]
\draw (0,1)-- (1.3,1);
\draw (1.3,0)-- (0,1);
\draw (0,1)-- (0,0);
\draw (0,0)-- (1.3,0);
\draw (1.3,0)-- (1.3,1);
\draw (0,0)-- (1.3,1);
\draw (2.6,1)-- (1.3,1);
\draw (1.3,0)-- (2.6,0);
\draw (2.6,1)-- (2.6,0);
\draw (1.3,0)-- (2.6,1);
\draw (1.3,1)-- (2.6,0);
\draw (2.6,0)-- (3.35,-0.3);
\draw (3.38,0.6)-- (2.6,1);
\draw (3.38,0.6)-- (2.6,0);
\draw (3.38,0.6)-- (3.35,-0.3);
\begin{scriptsize}
\fill  (0,0) circle (1.5pt);
%\draw (0.16,0.26) node {$A$};
\fill (1.3,0) circle (1.5pt);
%\draw (1.46,0.26) node {$B$};
\fill  (2.6,0) circle (1.5pt);
\draw (2.76,-0.3) node {$w$};
\fill  (3.35,-0.3) circle (1.5pt);
%\draw (3.5,-0.24) node {$E$};
\fill  (0,1) circle (1.5pt);
%\draw (0.14,1.26) node {$F$};
\fill  (1.3,1) circle (1.5pt);
%\draw (1.46,1.26) node {$G$};
\fill  (2.6,1) circle (1.5pt);
%\draw (2.76,1.26) node {$H$};
\fill  (3.38,0.6) circle (1.5pt);
%\draw (3.54,0.86) node {$D$};
\end{scriptsize}
\end{tikzpicture}
\caption{$G_{12}$}
\end{figure}
		\end{minipage}
		\begin{minipage}{.5\linewidth}
		\captionsetup[figure]{labelformat=empty}
\begin{figure}[H]
				\begin{tikzpicture}[scale=1]
\draw (0,1)-- (1.3,1);
\draw (1.3,0)-- (0,1);
\draw (0,1)-- (0,0);
\draw (0,0)-- (1.3,0);
\draw (1.3,0)-- (1.3,1);
\draw (0,0)-- (1.3,1);
\draw (2.6,1)-- (1.3,1);
\draw (1.3,0)-- (2.6,0);
\draw (2.6,1)-- (2.6,0);
\draw (1.3,0)-- (2.6,1);
\draw (1.3,1)-- (2.6,0);
\draw (2.6,0)-- (3.35,-0.3);
\draw (3.38,0.6)-- (2.6,1);
\draw (3.38,0.6)-- (2.6,0);
\draw (3.38,0.6)-- (3.35,-0.3);
\draw [color=red](3.38,0.6)-- (1.3,1);
\draw [color=red](3.38,0.6)-- (1.3,0);
\draw [color=red](3.35,-0.3)-- (1.3,0);
\draw [color=red](3.35,-0.3)-- (2.6,1);
\draw [color=red](3.35,-0.3)-- (1.3,1);
\begin{scriptsize}
\fill  (0,0) circle (1.5pt);
%\draw (0.16,0.26) node {$A$};
\fill  (1.3,0) circle (1.5pt);
%\draw (1.46,0.26) node {$B$};
\fill (2.6,0) circle (1.5pt);
\draw (2.76,-0.4) node {$w$};
\fill  (3.35,-0.3) circle (1.5pt);
%\draw (3.5,-0.04) node {$E$};
\fill  (0,1) circle (1.5pt);
%\draw (0.14,1.26) node {$F$};
\fill  (1.3,1) circle (1.5pt);
%\draw (1.46,1.26) node {$G$};
\fill  (2.6,1) circle (1.5pt);
%\draw (2.76,1.26) node {$H$};
\fill  (3.38,0.6) circle (1.5pt);
%\draw (3.54,0.86) node {$D$};
\end{scriptsize}
\end{tikzpicture}
\caption{$(G_{12})_w$}
\end{figure}
		\end{minipage}
	\end{minipage}

\begin{theorem}\label{coc}
Let $G=K_{r_1}\cup_{K_{q_1}}K_{r_2}\cup_{K_{q_2}}\cdots \cup_{K_{q_{m-1}}}K_{r_m}$ such that $q_i\geq 2$ for all $i=1,\dots,m-1$ and $V(K_{q_{j-1}})\cap V(K_{q_{j}})\neq \emptyset$ for some $j=2,\dots,r$. Then $\depth(S/J_G)\geq d(G)+f(G)+1$.
\end{theorem}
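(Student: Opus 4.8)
The plan is to argue by induction on $\iv(G)$, using Ohtani's lemma (\Cref{ohtani-lemma}) through the short exact sequence \eqref{ohtani-ses} together with the depth inequality \cite[Proposition 1.2.9]{bh}: for an internal vertex $v$ one has
\[
\depth \frac{S}{J_G}\ \geq\ \min\left\{\, \depth \frac{S}{(x_v,y_v)+J_{G\setminus v}},\ \ \depth \frac{S}{J_{G_v}},\ \ \depth \frac{S}{(x_v,y_v)+J_{G_v\setminus v}}+1 \,\right\}.
\]
So it is enough to exhibit an internal vertex $v$ for which the first two terms are $\geq d(G)+f(G)+1$ and the third is $\geq d(G)+f(G)$. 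The reduction to indecomposable chains of cliques has already been carried out (in the paragraph preceding the statement, via \cite[Theorem 2.7]{Rinaldo-Rauf}), so we may assume $q_i\geq 2$ for all $i$; then $m\geq 3$ and $\kappa(G)\geq 2$, hence $G\setminus v$ is connected for every $v$.

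For the inductive step I would take $v\in V(K_{q_{m-1}})\setminus V(K_{q_{m-2}})$. This is nonempty: otherwise $V(F_{m-1})\cap V(F_m)=V(K_{q_{m-1}})\subseteq V(F_{m-2})$, making $F_{m-2}$ a second branch of $F_m$, against the uniqueness of the branch $F_{m-1}$ in the leaf order. With such a $v$ we have $N_G[v]=V(F_{m-1})\cup V(F_m)$, so $G_v$ merges the two end-cliques $F_{m-1},F_m$ into one and is again a chain of cliques, on $F_1,\dots,F_{m-2},\,F_{m-1}\cup F_m$ with separators $K_{q_1},\dots,K_{q_{m-2}}$; moreover $v$ is simplicial in $G_v$, so $G_v\setminus v$ is this same chain with one vertex deleted. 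The graph $G\setminus v$ is a chain of cliques on $F_1,\dots,F_{m-2},\,F_{m-1}\setminus v,\,F_m\setminus v$ whose last separator is $K_{q_{m-1}}\setminus v$. In all three cases $\iv$ strictly drops (the internal vertex $v$ is either removed or made simplicial, and no simplicial vertex becomes internal), which drives the induction. The point of this choice is that $K_{q_{m-2}}$ is untouched in $G\setminus v$ and $V(K_{q_{m-2}})\cap V(K_{q_{m-1}}\setminus v)=V(K_{q_{m-2}})\cap V(K_{q_{m-1}})$, so $G\setminus v$ still has an overlapping pair of consecutive separators whichever pair of $G$ did, while in $G_v$ the last separator $K_{q_{m-1}}$ is absorbed, so at least one overlap is annihilated there.

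One then runs the trichotomy. For $G\setminus v$: if it is indecomposable with all separators of size $\geq 2$ it satisfies the hypothesis, and the induction hypothesis together with \Cref{g-v} gives $\depth(S/J_{G\setminus v})\geq d(G\setminus v)+f(G\setminus v)+1\geq d(G)+f(G)+1$; if it is decomposable, split it at its cut vertices and apply this to the indecomposable piece carrying the overlap, and \Cref{depth-lower} or \cite[Corollary 2.5]{Hibi-Madani} to the remaining pieces, re-assembling as in the reduction before the theorem. For $G_v$ and $G_v\setminus v$: each is a chain with one fewer maximal clique; if it still has an overlapping pair of consecutive separators, apply the induction hypothesis; otherwise it is a generalized block graph (in low-$\iv$ cases a member of the Hibi--Madani families $\mathcal G_d,\mathcal F_\kappa$, or a complete graph), whose depth is known exactly. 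Using \Cref{diameter-lemma} and the exact depth formulas, the inequalities to be verified collapse, in the boundary case, to $q_{m-1}>|V(K_{q_{m-2}})\cap V(K_{q_{m-1}})|$, which is exactly the chain (unique-branch) condition, so they hold; the base case $\iv(G)=3$ (forcing $m=3$, $q_1=q_2=2$) is checked directly by the same application of Ohtani, the auxiliary graphs lying in $\mathcal F_2$ or being complete.

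The main obstacle is not any single estimate but the uniform organization of this case analysis, and two subtleties deserve attention. First, one must bookkeep the diameter carefully: passing to $G_v$ merges two cliques and can lower the diameter by one, and an overlapping pair of separators already lowers the diameter of the surrounding chain by one, so these effects have to be tracked simultaneously when comparing $d(G_v)+f(G_v)$ (and $d(G\setminus v)+f(G\setminus v)$) with $d(G)+f(G)$. Second, when $q_{m-1}=2$ the separator $K_{q_{m-1}}\setminus v$ has size one and the remaining shared vertex can become a (possibly non-simplicial) cut vertex of $G\setminus v$, putting $G\setminus v$ outside the indecomposable hypothesis; this forces one either to re-select $v$ inside $V(K_{q_{m-2}})\cap V(K_{q_{m-1}})$ and merge the last three cliques, or to treat the resulting connectivity-one graph separately. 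Once these are dealt with, the displayed depth inequality closes the induction and gives $\depth(S/J_G)\geq d(G)+f(G)+1$.
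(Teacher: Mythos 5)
Your overall skeleton (induction on $\iv(G)$, Ohtani's lemma via \eqref{ohtani-ses}, the depth lemma \cite[Proposition 1.2.9]{bh}, and the prior reduction to indecomposable chains) matches the paper, but your choice of pivot vertex does not, and that choice is where the gap lies. You take $v\in V(K_{q_{m-1}})\setminus V(K_{q_{m-2}})$, i.e.\ a vertex of the last separator \emph{outside} the overlap; the paper takes $v\in V(K_{q_{j-1}})\cap V(K_{q_j})$, i.e.\ a vertex \emph{inside} an overlapping pair. The consequence you yourself flag is real and not peripheral: when $q_{m-1}=2$ and the overlap sits at $j=m-1$, the graph $G\setminus v$ is a chain whose last separator is a single non-simplicial vertex lying in $K_{q_{m-2}}$, so it is neither decomposable in the sense needed for \cite[Theorem 2.7]{Rinaldo-Rauf} nor within the theorem's hypothesis, and your trichotomy has no branch that handles it. This configuration is forced in the base case $\iv(G)=3$ (where $m=3$, $q_1=q_2=2$), so the induction cannot even get started as written. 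Your suggested escape --- ``re-select $v$ inside $V(K_{q_{m-2}})\cap V(K_{q_{m-1}})$'' --- is precisely the paper's uniform choice, so the fix is not a variant of your argument but a replacement of it.

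It is worth seeing what the paper's choice buys beyond repairing that case. With $v$ in the overlap, forming $G_v$ merges \emph{three} consecutive cliques, so $d(G_v)=d(G)-1$ while $f(G_v)\geq f(G)+3$ (both separators containing $v$ are absorbed); hence $d(G_v)+f(G_v)\geq d(G)+f(G)+2$ and the coarse bound $\depth\geq d+f$ of \cite[Theorem 3.5]{MKM-depth} already gives the needed estimates for $G_v$ and $G_v\setminus v$ --- no induction hypothesis, no appeal to exact depth formulas for generalized block graphs, no case split on whether $G_v$ retains an overlap. The only dichotomy left is on $G\setminus v$: either the overlap was exactly $\{v\}$, in which case $d(G\setminus v)\geq d(G)+1$ and the coarse bound again suffices, or the overlap survives and the induction hypothesis applies with $d$ and $f$ unchanged. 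Your route, by contrast, needs the induction hypothesis for $G_v$ and $G_v\setminus v$ in some branches, the exact generalized-block-graph formula (\Cref{depth-lower}) in others, and careful diameter bookkeeping to verify the boundary inequality $q_{m-1}>|V(K_{q_{m-2}})\cap V(K_{q_{m-1}})|$ (that computation, for what it is worth, does check out). So even where your argument closes, it is heavier; and where it does not close, the repair is the paper's proof.
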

\begin{proof} First, observe that $\iv(G)=|\cup_{i=1}^{m-1} V(K_{q_i})|$ and $\kappa(G)=\min\{q_i : i=1,\ldots,m-1\}$.
We prove the assertion by induction on $\iv(G)$. The base case is $\iv(G)=3$, i.e., $m=3, q_1=2$ and $q_2=2$. By construction, $d(G) = 2, f(G) = n-3$ and $\kappa(G)=2$. Hence $d(G)+f(G)+1 = n = n+2-\kappa(G)$. Therefore, by \Cref{claim5}, $\depth (S/J_G) = n=d(G)+f(G)+1$. Assume that $\iv(G)\geq 4$. Let $v\in V(K_{q_{j-1}})\cap V(K_{q_{j}})$. %Since $v$ is an internal vertex, by \Cref{ohtani-lemma}, we can write $J_G=J_{G_v}\cap ((x_v,y_v)+J_{G\setminus v})$. 
Then \[G_v = K_{r_1} \cup_{K_{q_1}} \cdots \cup_{K_{q_{j-3}}} K_{r_{j-2}} \cup_{K_{q_{j-2}}} K_r \cup_{K_{q_{j+1}}}K_{r_{j+2}} \cup_{K_{q_{j+2}}} \cdots \cup_{K_{q_{m-1}}} K_{r_m},\]
where $K_r$ is the complete graph on $N_G[v]$.
If $V(K_{q_{k-1}})\cap V(K_{q_{k}}) = \emptyset$ for all $k\neq j$, then $G_v$ and $G_v \setminus v$ are generalized block graphs. 
If $V(K_{q_{k-1}})\cap V(K_{q_{k}}) \neq \emptyset$ for some $k \neq j$, then 
$G_v$ and $G_v\setminus v$ are graphs satisfying the hypothesis of the theorem with $\iv(G_v) < \iv(G)$ and $\iv(G_v\setminus v) < \iv(G)$.
Any longest induced path in $G$ intersects with $K_{r_t}$ for all $t = 1,\ldots,m$ and can have at most one edge in $K_{r_t}$ for each $t=1,\ldots,m$. In any of the longest induced path in $G$, there is a portion of length at least $2$ that passes through $K_{r_{j-1}} \cup K_{r_j} \cup K_{r_{j+1}}$. Since these three complete graphs have merged to become one complete graph in $G_v$, such a portion will be replaced with an edge in $G_v$. All the other edges in the path in $G$ will remain the same in $G_v$ as well. Hence,
$d(G_v) \leq d(G)-1$. Combining this inequality with \Cref{diameter-lemma}, we get $d(G_v) = d(G)-1$. 

Note that the vertices of $V(K_{q_i})$ are internal vertices in $G$ for $i = 1, \ldots, m-1$. If $v  \in V(K_{q_i})$, then the vertices of $V(K_{q_i})$ become simplicial vertices in $G_v$. Hence $f(G_v) =f(G)+|\cup_{v \in V(K_{q_l})}V(K_{q_l})|.$
Since $|V(K_{q_i})| \geq 2$ for all $i$ and $v \in V(K_{q_{j-1}}) \cap V(K_{q_j})$, $|\cup_{v \in V(K_{q_l})}V(K_{q_l})|\geq 3$. Moreover, all these properties are satisfied by the graph $G_v \setminus v$ as well. Therefore,
$d(G_v)+f(G_v)\geq d(G)+f(G)+2$ and $d(G_v\setminus v)+f(G_v\setminus v)\geq d(G)+f(G)+1$. Therefore, by \cite[Theorem 3.5]{MKM-depth}, we have $$\depth(S/J_{G_v})\geq d(G_v)+f(G_v)\geq d(G)+f(G)+2 \text{ and } $$ $$\depth(S/((x_v,y_v)+J_{G_v\setminus v}))\geq d(G_v\setminus v)+f(G_v\setminus v)\geq  d(G)+f(G)+1.$$
Now we describe $G \setminus v$. Suppose $V(K_{q_{j-1}}) \cap V(K_{q_j}) = \{v\}$. Since $G\setminus v$ is connected, $f(G\setminus v)=f(G)$. Note that any shortest induced path between two vertices, say $z$ and $z'$, at a distance $d(G)$ must pass through $v$. Thus $\dist_{G\setminus v}(z,z') \geq d(G)+1$. Therefore, $d(G\setminus v) \geq d(G)+1$. Hence 
\[\depth (S/((x_v,y_v)+J_{G\setminus v})) \geq d(G\setminus v) + f(G\setminus v) \geq d(G)+f(G)+1,\] where the first inequality follows from \cite[Theorem 3.5]{MKM-depth}. If $\{v\} \subsetneq V(K_{q_{j-1}})\cap V(K_{q_j}),$ then $G \setminus v$ satisfies the hypothesis of the theorem and $\iv(G\setminus v) = \iv(G)-1$. Hence by induction on the number of internal vertices, we get $\depth(S/((x_v,y_v)+J_{G\setminus v})) \geq d(G \setminus v) + f(G \setminus v) + 1.$ If $P'$ is a path in $G$ that does not pass through $v$, then it remains a path in $G\setminus v$. If $P':z=z_1,z_2,\ldots,z_{j-1},v,z_{j+1},\ldots,z_k,z'$ is a path in $G'$, then we can see that $P'':z=z_1, z_2,\ldots, z_{j-1},v',z_{j+1},\ldots,z_k,z'$ is a path in $G\setminus  v$, where $v' \in V(K_{q_{j-1}})\cap V(K_{q_j}) \setminus \{v\}$. Hence $d(G\setminus v)=d(G)$. Therefore, we get $\depth(S/((x_v,y_v)+J_{G\setminus v})) \geq d(G)+f(G)+1$.

Hence, the assertion follows by applying \cite[Proposition 1.2.9]{bh} to the short exact sequence \eqref{ohtani-ses}.
\end{proof}

For a graph $G$, if $d(G)+f(G) = n+2-\kappa(G)$, then $G$ is chordal, \cite{Hibi-Madani}. And,  in that case, they proved that $\depth (S/J_G) = d(G)+f(G)$, (Theorem 2.4, \cite{Hibi-Madani}). If $d(G)+f(G)+1=n+2-\kappa(G)$, then it follows from \Cref{induced-cycle-charac} that $G$ is either chordal or has at most one induced cycle of length $4$ and no cycles of length $\geq 5$. Therefore, we raise the following question:
\begin{question}
If $G$ is a graph containing an induced cycle of length at least $5$, then is $\depth (S/J_G) \geq d(G)+f(G)+1$?
\end{question}
While we believe that this question has an affirmative answer, we are unable to prove this in general. Below, we answer this question affirmatively when $G$ is a unicyclic graph or a quasi-cycle graph.

\begin{proposition}\label{nec-min-depth}
Let $G$ be a unicyclic graph or a quasi-cycle graph such that $\depth(S/J_G)=d(G)+f(G)$. Then either $G$ is  chordal or $G$ has an induced $C_4$.
\end{proposition}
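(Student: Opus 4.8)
The plan is to argue by contrapositive: assuming that $G$ is a unicyclic or quasi-cycle graph containing an induced cycle $C_\ell$ with $\ell\ge 5$, I will show that $\depth(S/J_G)\ge d(G)+f(G)+1$. The starting point is the known lower bound $d(G)+f(G)\le\depth(S/J_G)$ from \cite{MKM-depth}, so it suffices to produce a strict inequality, which I will do by peeling off an internal vertex via Ohtani's lemma (\Cref{ohtani-lemma}) and the associated short exact sequence \eqref{ohtani-ses}, then applying \cite[Proposition 1.2.9]{bh}. The vertex $v$ to delete should be chosen \emph{on the long cycle} $C_\ell$; since $\ell\ge 5$, every vertex of $C_\ell$ is internal, so such a $v$ exists and qualifies for Ohtani's lemma.

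The key structural input is how $d$ and $f$ behave under the three operations $G_v$, $G_v\setminus v$, $G\setminus v$ for $v\in V(C_\ell)$. For $G\setminus v$: since $G$ is unicyclic (resp. quasi-cycle), removing a cycle vertex either disconnects $G$ or turns it into a graph whose only cycles are shorter — in the unicyclic case $G\setminus v$ is a forest, and every forest component is chordal; in the quasi-cycle case $G\setminus v$ becomes block-like / chordal. In either event I expect $d(G\setminus v)\ge d(G)$ in the connected case (a shortest path through $v$ must be rerouted the long way around $C_\ell$, lengthening it), and combined with $f(G)\le f(G\setminus v)$ (\Cref{g-v}(2)) this forces $d(G\setminus v)+f(G\setminus v)\ge d(G)+f(G)+1$; if $G\setminus v$ is disconnected I handle each component separately using \eqref{Bettiproduct} and the same diameter/simplicial-vertex bookkeeping, noting $d$ is additive over the pieces joined at $v$. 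Hence $\depth(S/((x_v,y_v)+J_{G\setminus v}))\ge d(G)+f(G)+1$ by \cite[Theorem 3.5]{MKM-depth}. For $G_v$ and $G_v\setminus v$: \Cref{diameter-lemma} gives $d(G_v)\ge d(G)-1$, and since $v$ had degree $\ge 2$ its neighbours on $C_\ell$ become simplicial in $G_v$, so $f(G_v)\ge f(G)+2$ and $f(G_v\setminus v)\ge f(G)+1$; thus $d(G_v)+f(G_v)\ge d(G)+f(G)+1$ and $d(G_v\setminus v)+f(G_v\setminus v)\ge d(G)+f(G)$, giving $\depth(S/J_{G_v})\ge d(G)+f(G)+1$ and $\depth(S/((x_v,y_v)+J_{G_v\setminus v}))\ge d(G)+f(G)$ again via \cite[Theorem 3.5]{MKM-depth}.

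With all three modules in \eqref{ohtani-ses} having depth $\ge d(G)+f(G)$, and the two "outer" ones in fact $\ge d(G)+f(G)+1$, \cite[Proposition 1.2.9]{bh} yields $\depth(S/J_G)\ge d(G)+f(G)+1$, which is the desired contradiction; therefore a unicyclic or quasi-cycle graph with minimal depth cannot contain an induced $C_\ell$ for $\ell\ge 5$, i.e. it is chordal or has an induced $C_4$. The main obstacle I anticipate is the bookkeeping when $G\setminus v$ becomes disconnected (or when $G_v$ collapses several cliques), in particular pinning down that $d$ genuinely does not drop and that the simplicial-vertex counts are as claimed — for the quasi-cycle case one has to be careful about which chords of the long cycle are present, and I would isolate this as a short preliminary lemma about $d(G\setminus v)$ and $f(G\setminus v)$ for $v$ on the unique long cycle before invoking the exact sequence.
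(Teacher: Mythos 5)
Your contrapositive set-up is fine, but the two inequalities on which the whole exact-sequence argument rests are not established, and one of them is false as stated. First, the claim that for $v$ on the long cycle ``its neighbours on $C_\ell$ become simplicial in $G_v$, so $f(G_v)\geq f(G)+2$'' fails whenever $\ell\geq 5$: if $a$ is a cycle-neighbour of $v$, then in $G_v$ the vertex $a$ is adjacent both to the other cycle-neighbour $b$ of $v$ and to its own second cycle-neighbour $a'$, and $\{a',b\}\notin E(G_v)$, so $a$ is not simplicial (already in $C_5$ one checks $f((C_5)_v)=f(C_5)+1$). Only $v$ itself is guaranteed to become simplicial, so you only get $d(G_v)+f(G_v)\geq (d(G)-1)+(f(G)+1)=d(G)+f(G)$, one short of what you need; and this loss is real: for $G=C_5$ with long paths attached at the two neighbours $2,5$ of $v=1$, one has $d(G_v)=d(G)-1$ and $f(G_v)=f(G)+1$, so \cite[Theorem 3.5]{MKM-depth} gives only $\depth(S/J_{G_v})\geq d(G)+f(G)$ for that choice of $v$. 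Second, for $G\setminus v$ you deduce ``$d(G\setminus v)\geq d(G)$ and $f(G\setminus v)\geq f(G)$ force $d(G\setminus v)+f(G\setminus v)\geq d(G)+f(G)+1$''; adding two weak inequalities cannot produce the extra $+1$, and the strictness you gesture at (rerouting around the cycle) only applies when every diameter-realizing geodesic passes through $v$. A correct version of your argument would have to choose $v$ on the cycle much more carefully and run a case analysis on where trees/cliques are attached, none of which is present.

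You are also missing a much shorter route, which is the one the paper takes: for unicyclic and quasi-cycle graphs it is already known that $n\leq \depth(S/J_G)\leq n+1$ (\cite[Theorem 3.5]{Rajib}, \cite[Theorem 4.8]{Arv-EJC}). Combining this with the upper bound $\depth(S/J_G)\leq n+2-\kappa(G)$ and the hypothesis $\depth(S/J_G)=d(G)+f(G)$ puts $G$ either in the Hibi--Madani class $d(G)+f(G)=n+2-\kappa(G)$, where \cite[Theorem 2.1]{Hibi-Madani} gives chordality, or in the class $d(G)+f(G)+1=n+2-\kappa(G)$, where \Cref{induced-cycle-charac}(1) already rules out induced cycles of length $\geq 5$. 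No new Ohtani-lemma computation is needed. I recommend you abandon the exact-sequence approach here and use the known depth bounds together with the structural characterizations proved earlier in the paper.
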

\begin{proof}
Let $G$ be a unicyclic graph or a quasi-cycle graph. Then it follows from \cite[Theorem 3.5]{Rajib} and \cite[Theorem 4.8]{Arv-EJC} that $n\leq \depth(S/J_G) \leq n+1$. If $G$ is a quasi-cycle, then $\kappa(G) \geq 2$. Hence $\depth (S/J_G) \leq n$ so that $\depth (S/J_G) = n = d(G) + f(G)$. Therefore, by \cite[Theorem 2.1]{Hibi-Madani}, $G$ is a chordal graph. Assume that $G$ is unicyclic but not a cycle. Then $\kappa(G)=1$. Now if $\depth (S/J_G) = n$, then $d(G)+f(G)+1=n+2-\kappa(G)$, and hence the assertion follows from \Cref{induced-cycle-charac}. If $\depth (S/J_G) = n+1$, then by \cite[Theorem 2.1]{Hibi-Madani} $G$ is chordal.
\end{proof}

\vskip 3mm \noindent
\textbf{Acknowledgement:} We are thankful to the anonymous referees for going through an initial manuscript and making suggestions which improved the exposition extensively.

\bibliographystyle{plain}
\bibliography{Reference}

\begin{thebibliography}{10}

\bibitem{BN17}
Arindam Banerjee and Luis N\'{u}\~{n}ez Betancourt.
\newblock Graph connectivity and binomial edge ideals.
\newblock {\em Proc. Amer. Math. Soc.}, 145(2):487--499, 2017.

\bibitem{Bollobas}
B\'{e}la Bollob\'{a}s.
\newblock {\em Modern graph theory}, volume 184 of {\em Graduate Texts in
  Mathematics}.
\newblock Springer-Verlag, New York, 1998.

\bibitem{dav}
Davide Bolognini, Antonio Macchia, and Francesco Strazzanti.
\newblock Binomial edge ideals of bipartite graphs.
\newblock {\em European J. Combin.}, 70:1--25, 2018.

\bibitem{dav2}
Davide Bolognini, Antonio Macchia, and Francesco Strazzanti.
\newblock Cohen-{M}acaulay binomial edge ideals and accessible graphs.
\newblock {\em J. Algebraic Combin.}, 55(4):1139--1170, 2022.

\bibitem{bh}
Winfried Bruns and J\"urgen Herzog.
\newblock {\em Cohen-{M}acaulay rings}, volume~39 of {\em Cambridge Studies in
  Advanced Mathematics}.
\newblock Cambridge University Press, Cambridge, 1993.

\bibitem{BV88}
Winfried Bruns and Udo Vetter.
\newblock {\em Determinantal rings}, volume 1327 of {\em Lecture Notes in
  Mathematics}.
\newblock Springer-Verlag, Berlin, 1988.

\bibitem{her1}
Viviana Ene, J\"urgen Herzog, and Takayuki Hibi.
\newblock Cohen-{M}acaulay binomial edge ideals.
\newblock {\em Nagoya Math. J.}, 204:57--68, 2011.

\bibitem{HH1}
J\"urgen Herzog, Takayuki Hibi, Freyja Hreinsd\'ottir, Thomas Kahle, and
  Johannes Rauh.
\newblock Binomial edge ideals and conditional independence statements.
\newblock {\em Adv. in Appl. Math.}, 45(3):317--333, 2010.

\bibitem{her2}
J\"{u}rgen Herzog and Giancarlo Rinaldo.
\newblock On the extremal {B}etti numbers of binomial edge ideals of block
  graphs.
\newblock {\em Electron. J. Combin.}, 25(1):Paper 1.63, 10, 2018.

\bibitem{Hibi-Madani}
Takayuki Hibi and Sara Saeedi~Madani.
\newblock Diameter and connectivity of finite simple graphs.
\newblock {\em Mediterr. J. Math.}, 20(6):Paper No. 310, 11, 2023.

\bibitem{JAR2}
A.~V. Jayanthan, Arvind Kumar, and Rajib Sarkar.
\newblock Regularity of powers of quadratic sequences with applications to
  binomial ideals.
\newblock {\em J. Algebra}, 564:98--118, 2020.

\bibitem{KM-CA}
Dariush Kiani and Sara Saeedi~Madani.
\newblock Some {C}ohen-{M}acaulay and unmixed binomial edge ideals.
\newblock {\em Comm. Algebra}, 43(12):5434--5453, 2015.

\bibitem{Arv-Jaco}
Arvind kumar.
\newblock Binomial edge ideals and bounds for their regularity.
\newblock {\em J. Algebraic Combin.}, 53(3):729--742, 2021.

\bibitem{Arv-EJC}
Arvind Kumar.
\newblock Lov\'{a}sz-{S}aks-{S}chrijver ideals and parity binomial edge ideals
  of graphs.
\newblock {\em European J. Combin.}, 93:Paper No. 103274, 19, 2021.

\bibitem{AR2}
Arvind Kumar and Rajib Sarkar.
\newblock Depth and extremal {B}etti number of binomial edge ideals.
\newblock {\em Math. Nachr.}, 293(9):1746--1761, 2020.

\bibitem{oh}
Masahiro Ohtani.
\newblock Graphs and ideals generated by some 2-minors.
\newblock {\em Comm. Algebra}, 39(3):905--917, 2011.

\bibitem{Rinaldo-Rauf}
Asia Rauf and Giancarlo Rinaldo.
\newblock Construction of {C}ohen-{M}acaulay binomial edge ideals.
\newblock {\em Comm. Algebra}, 42(1):238--252, 2014.

\bibitem{Rinaldo-Cactus}
Giancarlo Rinaldo.
\newblock Cohen-{M}acaulay binomial edge ideals of cactus graphs.
\newblock {\em J. Algebra Appl.}, 18(4):1950072, 18, 2019.

\bibitem{MKM-depth}
M.~Rouzbahani~Malayeri, S.~Saeedi~Madani, and D.~Kiani.
\newblock On the depth of binomial edge ideals of graphs.
\newblock {\em J. Algebraic Combin.}, 55(3):827--846, 2022.

\bibitem{MKM-JA}
Mohammad Rouzbahani~Malayeri, Sara Saeedi~Madani, and Dariush Kiani.
\newblock Binomial edge ideals of small depth.
\newblock {\em J. Algebra}, 572:231--244, 2021.

\bibitem{MKM-Conj}
Mohammad Rouzbahani~Malayeri, Sara Saeedi~Madani, and Dariush Kiani.
\newblock A proof for a conjecture on the regularity of binomial edge ideals.
\newblock {\em J. Combin. Theory Ser. A}, 180:105432, 9, 2021.

\bibitem{Madani-survey}
Sara Saeedi~Madani.
\newblock Binomial edge ideals: a survey.
\newblock In {\em Multigraded algebra and applications}, volume 238 of {\em
  Springer Proc. Math. Stat.}, pages 83--94. Springer, Cham, 2018.

\bibitem{Rajib}
Rajib Sarkar.
\newblock Binomial edge ideals of unicyclic graphs.
\newblock {\em Internat. J. Algebra Comput.}, 31(7):1293--1318, 2021.

\end{thebibliography}

\end{document}